\documentclass[12pt]{amsart}
\usepackage{txfonts}      
\usepackage{amssymb}
\usepackage{eucal}
\usepackage{amsmath}
\usepackage{amscd}
\usepackage{xcolor}
\usepackage{multicol}
\usepackage[all]{xy}           
\usepackage{graphicx}
\usepackage{color}
\usepackage{colordvi}
\usepackage{xspace}
\usepackage{tikz}
\usepackage{makecell}
\usepackage{appendix}
\usepackage{amsthm}

\usepackage{ifpdf}
\ifpdf
\usepackage[colorlinks,final,backref=page,hyperindex]{hyperref}
\else
\usepackage[colorlinks,final,backref=page,hyperindex,hypertex]{hyperref}
\fi

\usepackage[active]{srcltx} 

\begin{document}


\newtheorem{thm}{Theorem}[section]
\newtheorem{lem}[thm]{Lemma}
\newtheorem{cor}[thm]{Corollary}
\newtheorem{pro}[thm]{Proposition}
\theoremstyle{definition}
\newtheorem{defi}[thm]{Definition}
\newtheorem{ex}[thm]{Example}
\newtheorem{cla}[thm]{Claim}
\newtheorem{rmk}[thm]{Remark}
\newtheorem{pdef}[thm]{Proposition-Definition}
\newtheorem{condition}[thm]{Condition}

\renewcommand{\labelenumi}{{\rm(\alph{enumi})}}
\renewcommand{\theenumi}{\alph{enumi}}

\newcommand {\emptycomment}[1]{} 

\newcommand{\nc}{\newcommand}
\newcommand{\delete}[1]{}

\nc{\tred}[1]{\textcolor{red}{#1}}
\nc{\tblue}[1]{\textcolor{blue}{#1}}
\nc{\tgreen}[1]{\textcolor{green}{#1}}
\nc{\tpurple}[1]{\textcolor{purple}{#1}}
\nc{\tgray}[1]{\textcolor{gray}{#1}}
\nc{\torg}[1]{\textcolor{orange}{#1}}
\nc{\tmag}[1]{\textcolor{magenta}}
\nc{\btred}[1]{\textcolor{red}{\bf #1}}
\nc{\btblue}[1]{\textcolor{blue}{\bf #1}}
\nc{\btgreen}[1]{\textcolor{green}{\bf #1}}
\nc{\btpurple}[1]{\textcolor{purple}{\bf #1}}

\nc{\todo}[1]{\tred{To do:} #1}

    \nc{\mlabel}[1]{\label{#1}}  
    \nc{\mcite}[1]{\cite{#1}}  
    \nc{\mref}[1]{\ref{#1}}  
    \nc{\meqref}[1]{\eqref{#1}}  
    \nc{\mbibitem}[1]{\bibitem{#1}} 

\delete{
    \nc{\mlabel}[1]{\label{#1}    { {\small\tgreen{\tt{{\ }(#1)}}}}} 
    \nc{\mcite}[1]{\cite{#1}{\small{\tt{{\ }(#1)}}}}  
    \nc{\mref}[1]{\ref{#1}{\small{\tred{\tt{{\ }(#1)}}}}}  
    \nc{\meqref}[1]{\eqref{#1}{{\tt{{\ }(#1)}}}}  
    \nc{\mbibitem}[1]{\bibitem[\bf #1]{#1}} 
}


\nc{\hn}[1]{\textcolor{red}{Henan:#1}}
\nc{\yy}[1]{\textcolor{blue}{Yanyong: #1}}
\nc{\li}[1]{\textcolor{purple}{#1}}
\nc{\lir}[1]{\textcolor{purple}{Li:#1}}

\nc{\revise}[1]{\textcolor{blue}{#1}}


\nc{\tforall}{\ \ \text{for all }}
\nc{\hatot}{\,\widehat{\otimes} \,}
\nc{\complete}{completed\xspace}
\nc{\wdhat}[1]{\widehat{#1}}

\nc{\ts}{\mathfrak{p}}
\nc{\mts}{c_{(i)}\ot d_{(j)}}

\nc{\NA}{{\bf NA}}
\nc{\LA}{{\bf Lie}}
\nc{\CLA}{{\bf CLA}}
\nc{\gda}{GD algebra\xspace}
\nc{\gdas}{GD algebras\xspace}
\nc{\gdba}{GD bialgebra\xspace}
\nc{\gdbas}{GD bialgebras\xspace}
\nc{\cybe}{CYBE\xspace}
\nc{\nybe}{NYBE\xspace}
\nc{\ccybe}{CCYBE\xspace}

\nc{\ndend}{pre-Novikov\xspace}
\nc{\calb}{\mathcal{B}}
\nc{\rk}{\mathrm{r}}
\newcommand{\pf}{\noindent{$Proof$.}\ }
\newcommand{\frkg}{\mathfrak g}
\newcommand{\frkh}{\mathfrak h}
\newcommand{\Z}{\mathbb{Z}}
\newcommand{\C}{\mathbb{C}}
\newcommand{\ad}{\mathrm{ad}}
\newcommand{\add}{\frka\frkd}
\newcommand{\frka}{\mathfrak a}
\newcommand{\frkb}{\mathfrak b}
\newcommand{\frkc}{\mathfrak c}
\newcommand{\frkd}{\mathfrak d}
\newcommand {\comment}[1]{{\marginpar{*}\scriptsize\textbf{Comments:} #1}}

\nc{\vspa}{\vspace{-.1cm}}
\nc{\vspb}{\vspace{-.2cm}}
\nc{\vspc}{\vspace{-.3cm}}
\nc{\vspd}{\vspace{-.4cm}}
\nc{\vspe}{\vspace{-.5cm}}


\nc{\disp}[1]{\displaystyle{#1}}
\nc{\bin}[2]{ (_{\stackrel{\scs{#1}}{\scs{#2}}})}  
\nc{\binc}[2]{ \left (\!\! \begin{array}{c} \scs{#1}\\
    \scs{#2} \end{array}\!\! \right )}  
\nc{\bincc}[2]{  \left ( {\scs{#1} \atop
    \vspace{-.5cm}\scs{#2}} \right )}  
\nc{\ot}{\otimes}
\nc{\sot}{{\scriptstyle{\ot}}}
\nc{\otm}{\overline{\ot}}
\nc{\ola}[1]{\stackrel{#1}{\la}}

\nc{\scs}[1]{\scriptstyle{#1}} \nc{\mrm}[1]{{\rm #1}}

\nc{\dirlim}{\displaystyle{\lim_{\longrightarrow}}\,}
\nc{\invlim}{\displaystyle{\lim_{\longleftarrow}}\,}

\nc{\bfk}{{\bf k}} \nc{\bfone}{{\bf 1}}
\nc{\rpr}{\circ}
\nc{\dpr}{{\tiny\diamond}}
\nc{\rprpm}{{\rpr}}

\nc{\mmbox}[1]{\mbox{\ #1\ }} \nc{\ann}{\mrm{ann}}
\nc{\Aut}{\mrm{Aut}} \nc{\can}{\mrm{can}}
\nc{\twoalg}{{two-sided algebra}\xspace}
\nc{\colim}{\mrm{colim}}
\nc{\Cont}{\mrm{Cont}} \nc{\rchar}{\mrm{char}}
\nc{\cok}{\mrm{coker}} \nc{\dtf}{{R-{\rm tf}}} \nc{\dtor}{{R-{\rm
tor}}}
\renewcommand{\det}{\mrm{det}}
\nc{\depth}{{\mrm d}}
\nc{\End}{\mrm{End}} \nc{\Ext}{\mrm{Ext}}
\nc{\Fil}{\mrm{Fil}} \nc{\Frob}{\mrm{Frob}} \nc{\Gal}{\mrm{Gal}}
\nc{\GL}{\mrm{GL}} \nc{\Hom}{\mrm{Hom}} \nc{\hsr}{\mrm{H}}
\nc{\hpol}{\mrm{HP}}  \nc{\id}{\mrm{id}} \nc{\im}{\mrm{im}}

\nc{\incl}{\mrm{incl}} \nc{\length}{\mrm{length}}
\nc{\LR}{\mrm{LR}} \nc{\mchar}{\rm char} \nc{\NC}{\mrm{NC}}
\nc{\mpart}{\mrm{part}} \nc{\pl}{\mrm{PL}}
\nc{\ql}{{\QQ_\ell}} \nc{\qp}{{\QQ_p}}
\nc{\rank}{\mrm{rank}} \nc{\rba}{\rm{RBA }} \nc{\rbas}{\rm{RBAs }}
\nc{\rbpl}{\mrm{RBPL}}
\nc{\rbw}{\rm{RBW }} \nc{\rbws}{\rm{RBWs }} \nc{\rcot}{\mrm{cot}}
\nc{\rest}{\rm{controlled}\xspace}
\nc{\rdef}{\mrm{def}} \nc{\rdiv}{{\rm div}} \nc{\rtf}{{\rm tf}}
\nc{\rtor}{{\rm tor}} \nc{\res}{\mrm{res}} \nc{\SL}{\mrm{SL}}
\nc{\Spec}{\mrm{Spec}} \nc{\tor}{\mrm{tor}} \nc{\Tr}{\mrm{Tr}}
\nc{\mtr}{\mrm{sk}}

\nc{\ab}{\mathbf{Ab}} \nc{\Alg}{\mathbf{Alg}}

\nc{\BA}{{\mathbb A}} \nc{\CC}{{\mathbb C}} \nc{\DD}{{\mathbb D}}
\nc{\EE}{{\mathbb E}} \nc{\FF}{{\mathbb F}} \nc{\GG}{{\mathbb G}}
\nc{\HH}{{\mathbb H}} \nc{\LL}{{\mathbb L}} \nc{\NN}{{\mathbb N}}
\nc{\QQ}{{\mathbb Q}} \nc{\RR}{{\mathbb R}} \nc{\BS}{{\mathbb{S}}} \nc{\TT}{{\mathbb T}}
\nc{\VV}{{\mathbb V}} \nc{\ZZ}{{\mathbb Z}}


\nc{\calao}{{\mathcal A}} \nc{\cala}{{\mathcal A}}
\nc{\calc}{{\mathcal C}} \nc{\cald}{{\mathcal D}}
\nc{\cale}{{\mathcal E}} \nc{\calf}{{\mathcal F}}
\nc{\calfr}{{{\mathcal F}^{\,r}}} \nc{\calfo}{{\mathcal F}^0}
\nc{\calfro}{{\mathcal F}^{\,r,0}} \nc{\oF}{\overline{F}}
\nc{\calg}{{\mathcal G}} \nc{\calh}{{\mathcal H}}
\nc{\cali}{{\mathcal I}} \nc{\calj}{{\mathcal J}}
\nc{\call}{{\mathcal L}} \nc{\calm}{{\mathcal M}}
\nc{\caln}{{\mathcal N}} \nc{\calo}{{\mathcal O}}
\nc{\calp}{{\mathcal P}} \nc{\calq}{{\mathcal Q}} \nc{\calr}{{\mathcal R}}
\nc{\calt}{{\mathcal T}} \nc{\caltr}{{\mathcal T}^{\,r}}
\nc{\calu}{{\mathcal U}} \nc{\calv}{{\mathcal V}}
\nc{\calw}{{\mathcal W}} \nc{\calx}{{\mathcal X}}
\nc{\CA}{\mathcal{A}}

\nc{\fraka}{{\mathfrak a}} \nc{\frakB}{{\mathfrak B}}
\nc{\frakb}{{\mathfrak b}} \nc{\frakd}{{\mathfrak d}}
\nc{\oD}{\overline{D}}
\nc{\frakF}{{\mathfrak F}} \nc{\frakg}{{\mathfrak g}}
\nc{\frakm}{{\mathfrak m}} \nc{\frakM}{{\mathfrak M}}
\nc{\frakMo}{{\mathfrak M}^0} \nc{\frakp}{{\mathfrak p}}
\nc{\frakS}{{\mathfrak S}} \nc{\frakSo}{{\mathfrak S}^0}
\nc{\fraks}{{\mathfrak s}} \nc{\os}{\overline{\fraks}}
\nc{\frakT}{{\mathfrak T}}
\nc{\oT}{\overline{T}}
\nc{\frakX}{{\mathfrak X}} \nc{\frakXo}{{\mathfrak X}^0}
\nc{\frakx}{{\mathbf x}}
\nc{\frakTx}{\frakT}      
\nc{\frakTa}{\frakT^a}        
\nc{\frakTxo}{\frakTx^0}   
\nc{\caltao}{\calt^{a,0}}   
\nc{\ox}{\overline{\frakx}} \nc{\fraky}{{\mathfrak y}}
\nc{\frakz}{{\mathfrak z}} \nc{\oX}{\overline{X}}


\title{Generalized conformal modules over the Virasoro conformal algebra}

\author{Henan Wu}
\address{School of Mathematics and Statistics, Shanxi University, Taiyuan 030006, PR China}
\email{wuhenan@sxu.edu.cn}

\author{Yanyong Hong}
\address{School of Mathematics, Hangzhou Normal University,
Hangzhou 311121, PR China}
\email{yyhong@hznu.edu.cn}
\thanks{Corresponding author: Yanyong Hong. Email: yyhong@hznu.edu.cn}
\subjclass[2010]{
17B10, 17B68, 17B69, 81R10, 81T40
}

\keywords{Lie conformal algebras, Virasoro conformal algebra, generalized conformal modules, annihilation Lie algebras, irreducible modules}

\begin{abstract}
\delete{In this paper, we initiate a systematic study of generalized conformal modules over the Virasoro conformal algebra $Vir$, a notion originally defined by V.~Kac but largely unexplored in the literature.
First, we present a twisted construction that produces new generalized conformal modules over any Lie conformal algebra from existing conformal modules.
Then we give a complete classification of non-trivial generalized conformal modules over $Vir$ that are free of rank one over $\mathbb C[\partial]$, and show that they are precisely those obtained from rank-one conformal modules via the twisted construction.
As an application, we classify all modules over the Lie algebra \(W_1\) of vector fields on a line that are free of rank one over $\mathbb C[L_{-1}]$, with explicit actions expressed in terms of Bell polynomials.
Furthermore, we develop a matrix exponential construction that yields irreducible generalized conformal modules over $Vir$ of arbitrary finite rank, revealing a striking phenomenon with no analogue in the conformal setting.
We also construct and classify a family of infinite torsion generalized conformal modules over $Vir$, which are non-trivial and irreducible under suitable conditions.}
\delete{Let $W_1=\bigoplus_{i\geq -1}\mathbb{C}L_i$ be the Lie algebra of vector fields on a line. In this paper, we study a class of non-weight modules over $W_1$,  which are free of rank one over $\mathbb{C}[L_{-1}]$. By establishing a correspondence   between such $W_1$-modules and (may be \hn{not} conformal) modules of the Virasoro conformal algebra $Vir$ that are free of rank \hn{one} over $\mathbb{C}[\partial]$, we first characterize the corresponding modules  over $Vir$ and then provide a complete classification of  $W_1$-modules under consideration by using Bell polynomials. Moreover, we investigate the simplicities and isomorphism classes of these modules.}
This paper investigates generalized conformal modules over the Virasoro conformal algebra $Vir$, a notion originally defined by V.~Kac but largely unexplored in the literature. First, we present a twisted construction that produces new generalized conformal modules over any Lie conformal algebra from existing conformal modules. Then we give a complete classification of non-trivial generalized conformal modules over $Vir$ that are free of rank one over $\mathbb C[\partial]$, which in turn yields a classification of  modules over the Lie algebra \(W_1=\bigoplus_{i\geq -1}\C L_i\) of vector fields on a line that are free of rank one over $\mathbb C[L_{-1}]$ with explicit actions expressed in terms of Bell polynomials. For arbitrary finite rank $n$, we construct a class of generalized conformal modules $V_{a,b,C(\partial)}$ and establish a complete irreducibility criterion in terms of the differential operator $\mathcal L=\frac{d}{d\partial}+C(\partial)$ acting on $\mathbb{C}(\partial)^n$. In the case of $n=2$, this criterion reduces to the absence of rational solutions of a Riccati equation. Then we prove the existence of irreducible generalized conformal modules of arbitrary finite rank, revealing a striking contrast with the conformal setting. We also construct and classify a family of infinite torsion generalized conformal modules over $Vir$, which are non-trivial and irreducible under suitable conditions.
\end{abstract}

\maketitle




\allowdisplaybreaks
\section{Introduction}
Introduced by V. Kac in \cite{K1}, Lie conformal algebras provide an axiomatic framework for the singular part of operator product expansions in conformal field theory \cite{BPZ}. Also known as  vertex Lie algebras \cite{P}, they are closely related to vertex algebras \cite{K1}, infinite-dimensional Lie algebras satisfying the
 locality property \cite{KacL} and Hamiltonian formalism in the theory of nonlinear evolution equations
 \cite{BDK}.  A Lie conformal algebra is called finite if it is finitely generated as a $\C[\partial]$-module. Structure theory and cohomology theory of finite Lie conformal algebras have been well developed (see \cite{DK, BKV, DK1}).

 Among all Lie conformal algebras, the Virasoro conformal algebra $Vir$
 is  undoubtedly the most fundamental and widely studied example. It is a finite simple Lie conformal algebra \cite{DK} and occupies a central position in conformal field theory.  Its algebraic significance is further highlighted by the fact that its coefficient algebra is isomorphic to the Witt algebra (the derivation Lie algebra of the Laurent polynomial ring $\C[x,x^{-1}]$), while its annihilation Lie algebra is isomorphic to the Lie algebra $W_1=\bigoplus_{i\geq -1}\C L_i$ of vector fields on a line which is the derivation Lie algebra of the polynomial ring $\C[x]$. Moreover, $Vir$ appears as a subalgebra of many important Lie conformal algebras such as the Schr\"odinger-Virasoro Lie conformal algebra \cite{SY}, Lie conformal algebras of Block type \cite{SXY}, and the general Lie conformal algebra $gc_N$ \cite{DK2}.

 A {\bf conformal module} over a Lie conformal algebra $\mathcal {A}$ is a $\C[\partial]$-module $V$ together with a $\C$-bilinear map $\mathcal {A}\times V\rightarrow V[\lambda]$, $(a,v)\mapsto a_\lambda v$, satisfying the standard axioms \cite{CK}. Note that by \cite{K1},  conformal modules over $\mathcal {A}$ are equivalent to restricted modules over the extended annihilation Lie algebra of $\mathcal {A}$ and there is an equivalence between the category of conformal modules over $\mathcal {A}$ and a certain category of modules over its coefficient algebra. Consequently, the study of conformal modules provides a powerful tool for understanding representations of infinite-dimensional Lie algebras. Indeed, conformal modules over various Lie conformal algebras have been extensively investigated. For instance,
finite irreducible conformal modules over finite simple Lie conformal algebras including $Vir$ have been completely classified via the representation theory of their extended annihilation Lie algebras \cite{CK}. It was shown that finite non-trivial irreducible conformal modules over $Vir$ are of rank one.

However, the class of conformal modules only captures those representations of the extended annihilation Lie algebra that are restricted in a strong sense, namely, for any $v\in V$ and $a\in \mathcal {A}$, there exists $N$ such that $a_{(n)}v=0$ for all $n>N$. This restriction, while natural from the viewpoint of vertex algebras and conformal field theory, leaves out a vast family of modules over infinite-dimensional Lie algebras that are of independent interest. In order to  study a broader  class of representations including those that are not necessarily restricted, it is desirable to generalize the notion of a conformal module by relaxing the polynomial condition $a_\lambda v\in V[\lambda]$ to allow $a_\lambda v$ to be a formal power series in $\lambda$. This gives the notion of {\bf generalized conformal modules}, which were already defined by V. Kac \cite{K1} but have not been systematically studied. The present paper initiates such a study, focusing on investigating generalized conformal modules over the Virasoro conformal algebra $Vir$.

In this paper, we first present a twisted construction that produces new generalized conformal modules over any Lie conformal algebra $\mathcal {A}$ from existing conformal modules over $\mathcal {A}$.  This construction demonstrates the richness of the theory of generalized conformal modules, as it yields modules that are not conformal in the usual sense.
It is well known that for any conformal module $V$ over a Lie conformal algebra $\mathcal {A}$,  $\mathcal {A}$ acts trivially on the torsion of $V$, i.e., $\mathcal{A}_\lambda \text{Tor}~(V)=0$. It follows immediately that the irreducible torsion conformal module is a one-dimensional trivial module.
However, this result does not hold for the generalized conformal modules. In fact, it only holds for the
finite generalized conformal modules.
We show that there exist infinite non-trivial irreducible torsion generalized conformal modules over $Vir$. Moreover, we give a complete classification of a class of infinite torsion generalized conformal modules over $Vir$ and investigate their irreducibilities and isomorphism classes. This reveals a fundamental difference between conformal modules and generalized conformal modules. \delete{The key distinctions are summarised in the following table.
\[
\begin{array}{c|c|c}
\text{irreducible torsion modules} & \text{conformal} & \text{generalized conformal} \\
\hline
\text{finite} & \text{trivial} & \text{trivial} \\
\text{infinite} & \text{trivial} & \text{may not trivial}
\end{array}
\]}

Through a series of skillful computations, we give a complete classification of non-trivial generalized conformal modules over $Vir$, which are free of rank one over $\C[\partial]$. These generalized conformal modules are precisely those arising from rank-one conformal modules over $Vir$ via the twisted construction mentioned above. The irreducibilities and isomorphism classes of these generalized conformal modules are also investigated. As a result,  we classify
 all $W_1$-modules that are free of rank one over $\C[L_{-1}]$, with explicit actions expressed in terms of Bell polynomials. This recovers a previous classification given in \cite{HCL}, where such modules appeared in a different form.

 \delete{Note that by \cite{CK}, finite non-trivial irreducible  conformal modules over $Vir$ are of rank one. It is therefore natural to ask whether there exist non-trivial irreducible generalized conformal modules of higher ranks over $Vir$. To address this question, we first develop a general construction of generalized conformal modules of arbitrary rank $n$ over $Vir$, starting from a given matrix $C(\partial)\in (\C[\partial])^{n\times n}$. In particular, by choosing a suitable $C(\partial)$ and using the representation theory of $W_1$, we obtain irreducible generalized conformal modules of arbitrary finite rank over $Vir$, which is a new phenomenon that has no analogue in the conformal setting. The stark contrast is summarised in the following table.}
 It is known from \cite{CK} that every finite non-trivial irreducible conformal module over $Vir$ is of rank one. A natural question is whether this rigidity persists for generalized conformal modules. In this paper, we show that the answer is negative. We construct a class of generalized conformal modules $V_{a,b,C(\partial)}=\C[\partial]^n$, parametrized by $a$, $b\in \mathbb{C}$ and a matrix $C(\partial)\in \mathbb C[\partial]^{n\times n}$, with the action given by $L_\lambda = (\partial+a\lambda+b)e^{\lambda \mathcal L}$ where $\mathcal L=\frac{d}{d\partial}+C(\partial)$. We prove that $V_{a,b,C(\partial)}$ is irreducible if and only if $a\neq 0$ and $\mathcal L$ acts irreducibly on $\mathbb C(\partial)^n$. Equivalently, for any $1\leq r<n$ and $B(\partial)\in \C(\partial)^{r\times r}$, the matrix differential equation $Y'(\partial)+C(\partial)Y(\partial)=Y(\partial)B(\partial)$ has no nonzero solutions $Y(\partial)\in \C(\partial)^{n\times r}$. For $n=2$, this criterion reduces to checking the absence of rational solutions of a Riccati equation. We classify their isomorphism classes: $V_{a_1,b_1,C_1(\partial)}\cong V_{a_2,b_2,C_2(\partial)}$ if and only if $(a_1,b_1)=(a_2,b_2)$ and there exists an invertible matrix $G(\partial)\in  \mathbb{C}[\partial]^{n\times n}$ such that $G'(\partial)=G(\partial)C_1(\partial)-C_2(\partial)G(\partial)$. We also give a criterion for when such modules are conformal. By choosing $C(\partial)=\begin{pmatrix}0&\partial\\ I_{n-1}&0\end{pmatrix}$, we prove the existence of irreducible generalized conformal modules of arbitrary finite rank,  which is a new phenomenon that has no analogue in the conformal setting. The stark contrast is summarized in the following table.
 \[
\begin{array}{c|c}
\text{modules} & \text{finite non-trivial irreducible} \\
\hline
\text{conformal modules} & \text{of rank one} \\
\text{generalized conformal modules} & \text{can be of arbitrary rank}
\end{array}
\]

Overall, this paper investigates generalized conformal modules over Lie conformal algebras, a direction that was defined by V. Kac but remained largely unexplored. Our results reveal that the generalized setting admits a representation theory vastly richer than the classical conformal case, including torsion modules of infinite rank and free modules of arbitrary finite rank. Moreover, by establishing explicit connections with modules over $W_1$, we provide a unified framework that not only recovers known results but also yields new classes of representations of infinite-dimensional Lie algebras. We believe that this work opens new perspectives for the representation theory of Lie conformal algebras and their associated infinite-dimensional Lie algebras.

From the perspective of Lie conformal algebras, the Virasoro conformal algebra $Vir$ is one of the most fundamental examples and serves as a testing ground for the general theory. Its representation theory is not only of intrinsic algebraic interest but also has close ties to conformal field theory via the correspondence between conformal modules and restricted modules over the associated annihilation Lie algebra. The generalized conformal modules introduced in this paper extend this correspondence beyond the restricted setting, thereby providing a broader algebraic framework that may be relevant to non-diagonalisable structures arising in logarithmic conformal field theory and related models. In particular, the explicit classification of generalized conformal modules over $Vir$ obtained here offers a systematic framework for understanding representations beyond the classical conformal setting, and we expect it to contribute to a more complete algebraic picture of such structures.

This paper is organized as follows. In Section~2, we recall the necessary background on Lie conformal algebras and their (extended) annihilation Lie algebras, as well as the correspondence between generalized conformal modules over a Lie conformal algebra and modules over its extended annihilation Lie algebra. We also present a twisted construction that produces generalized conformal modules from existing conformal modules, and establish a structural result showing that any finite irreducible generalized conformal module over a Lie conformal algebra is either trivial or free of finite rank over $\C[\partial]$. Section~3 is devoted to the classification of non-trivial generalized conformal modules over $Vir$ that are free of rank one over $\C[\partial]$. The irreducibility criterion and isomorphism classes of these modules are also provided.
Moreover, as an application, we give a classification of modules over $W_1$ that are free of rank one over $\C[L_{-1}]$.
In Section 4, we construct a class of generalized conformal modules $V_{a,b,C(\partial)}$ of arbitrary finite rank $n$ and establish a complete irreducibility criterion in terms of the differential operator $\mathcal L=\frac{d}{d\partial}+C(\partial)$ acting on $\mathbb C(\partial)^n$. In the case of $n=2$, the criterion reduces to the absence of rational solutions of a Riccati equation. We also classify isomorphism classes, characterize when these modules are conformal, and prove the existence of irreducible generalized conformal modules of arbitrary finite rank, revealing a striking contrast with the conformal setting. Section~5 constructs and classifies a family of infinite torsion generalized conformal modules over $Vir$ and determines their irreducibilities and isomorphism classes.

Throughout this paper, we denote by $\C$, $\Z$,  $\Z_+$ and $\Z_{\geq -1}$ the sets of complex numbers, integer numbers, nonnegative integers and integers that are greater than or equal to  1. Moreover, if $A$ is a vector space over $\C$, then we denote the vector spaces of polynomials and power series in $\lambda$ with coefficients in $A$  by $A[\lambda]$ and $A[[\lambda]]$ respectively. For a nonzero polynomial $f(\partial,\lambda)\in \C[\partial, \lambda]$, we denote the degree of $f(\partial,\lambda)$ in $\lambda$ by $\text{deg}_\lambda(f(\partial,\lambda))$.

\section{Preliminaries on generalized conformal modules over Lie conformal algebras}
In this section, we recall the basic definitions and properties of Lie conformal algebras, their annihilation Lie algebras, and their extended annihilation Lie algebras. We then introduce the notion of generalized conformal modules  and present a twisted construction that produces new generalized conformal modules from old ones. Finally, we analyze the structure of finite irreducible generalized conformal modules over an arbitrary Lie conformal algebra, showing that they are either trivial or free of finite rank as $\C[\partial]$-modules. These results lay the foundation for the classification of finite irreducible generalized conformal modules over $Vir$.
\subsection{Lie conformal algebras and extended annihilation Lie algebras}
First, we recall the definition of Lie conformal algebras.
\begin{defi}
\cite{DK}
A {\bf Lie conformal algebra} $\mathcal {A}$ is a $\C[\partial ]$-module endowed with a $\C$-bilinear map $\mathcal {A}\times \mathcal {A}\rightarrow \mathcal{A}[\lambda]$, $a\times b \mapsto [a_\lambda b]
$ subject to the following relations for any $a, b, c\in \mathcal {A}$:
\begin{align*}
[\partial a_\lambda b]&=-\lambda[a_\lambda b],\ \ [ a_\lambda \partial b]=(\partial+\lambda)[a_\lambda b] \ \ \mbox{(conformal\  sesquilinearity)},
\\
{[a_\lambda b]} &= -[b_{-\lambda-\partial}a] \ \ \mbox{(skew-symmetry)},
\\
{[a_\lambda[b_\mu c]]}&=[[a_\lambda b]_{\lambda+\mu
}c]+[b_\mu[a_\lambda c]]\ \ \mbox{(Jacobi \ identity)}.
\end{align*}
A Lie conformal algebra is called {\bf finite} if it is finitely generated as a $\C[\partial]$-module.
\end{defi}

\begin{ex}
The {\bf Virasoro conformal algebra} $Vir=\mathbb{C}[\partial]L$ is a free $\mathbb{C}[\partial]$-module of rank one, whose $\lambda$-brackets are determined by
\begin{eqnarray*}
[L_\lambda L]=(\partial+2\lambda)L.
\end{eqnarray*} Furthermore, $Vir$ is a simple Lie conformal algebra (see \cite{DK}).
\end{ex}

Next, let us recall the notion of the (extended) annihilation Lie algebra of a Lie conformal algebra.
\begin{defi} \cite{K1} The {\bf annihilation Lie algebra} $\text{Lie}( \mathcal{A})^+$ of a Lie conformal algebra $ \mathcal{A}$ is the vector space $\text{span}_\mathbb{C}\{a_{(n)}\mid a \in \mathcal{A} , \ n\in \mathbb{Z}_+\}$ with relations
\begin{eqnarray*}(\partial a)_{(n)}=-na_{(n-1)},\ (a+b)_{(n)}=a_{(n)}+b_{(n)},\ (ka)_{(n)}=ka_{(n)}\; \;\text{for any $a,b \in \mathcal{A}$ and $k \in \mathbb{C}$},
\end{eqnarray*}
and the Lie brackets of $\text{Lie}( \mathcal{A})^+$ are given by
\begin{eqnarray*}
[a_{(m)},b_{(n)}]= \sum\limits_{i\in \mathbb{Z}_+} \binom m i (a_{(i)}b)_{(m+n-i)}\;\;\text{for any $a$, $b\in \mathcal {A}$, $m$, $n\in \Z_+$.}
\end{eqnarray*}
The {\bf extended annihilation Lie algebra} $\text{Lie}( \mathcal{A})^e$ of $\mathcal{A}$ is the semi-direct product of Lie algebras $\mathbb{C}\partial \ltimes \text{Lie}( \mathcal{A})^+$ with the Lie brackets
\begin{eqnarray*}
[\partial, a_{(n)}]=-na_{(n-1)}\;\;\text{for any $a\in \mathcal{A}$, $n\in \Z_+$,}
\end{eqnarray*} which contains $\text{Lie}( \mathcal{A})^+$  as a subalgebra.
\end{defi}

\begin{ex}
The annihilation Lie algebra $\text{Lie}({Vir})^+$  of $Vir$ is a Lie algebra with a basis $\{L_{(i)}\}_{i\in \Z_+}$ such that
\begin{eqnarray}
[L_{(m)},L_{(n)}]=(m-n)L_{(m+n-1)} \;\;\text{for any $m$, $n\in \Z_+$.}
\end{eqnarray}
The extended annihilation Lie algebra $\text{Lie}( Vir)^e$ of $Vir$ is a Lie algebra with a basis $\{\partial, \{L_{(i)}\}_{i\in \Z_+}\}$ with the Lie brackets given by
\begin{eqnarray*}
[\partial, L_{(n)}]=-nL_{(n-1)},\;\;[L_{(m)},L_{(n)}]=(m-n)L_{(m+n-1)} \;\;\text{for any $m$, $n\in \Z_+$.}
\end{eqnarray*}
Note that $\partial-L_{(0)}$ is in the center of $\text{Lie}( Vir)^e$. Consequently, we have
$$\text{Lie}( Vir)^e=\text{Lie}( Vir)^+\oplus \C(\partial-L_{(0)})$$
as a direct sum of Lie algebras.
\end{ex}

Recall that the Lie algebra $W_1=\bigoplus_{i\geq -1}\mathbb{C}L_i$ of vector fields on a line is defined by the commutation relation
\begin{eqnarray*}
[L_i, L_j]=(i-j)L_{i+j}\;\;\text{for any $i$, $j\in \Z_{\geq -1}$.}
\end{eqnarray*}

\begin{pro}\label{pro-1}
$\text{Lie}( Vir)^+$ is isomorphic to $W_1$ as Lie algebras.
\end{pro}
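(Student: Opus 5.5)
The plan is to write down an explicit linear map and verify that it preserves brackets. By the preceding example, $\text{Lie}(Vir)^+$ has basis $\{L_{(m)}\}_{m\in\Z_+}$ with
\[
[L_{(m)},L_{(n)}]=(m-n)L_{(m+n-1)} .
\]
$W_1$ has basis $\{L_i\}_{i\geq -1}$ with $[L_i,L_j]=(i-j)L_{i+j}$. The index shift $i=m-1$ sends $\Z_+$ bijectively onto $\Z_{\geq -1}$. It also converts $m+n-1$ into $(m-1)+(n-1)+1$, which is one more than the sum of the shifted indices. A pure shift therefore does not match the structure constants directly, and we need the index sum to come out right.

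Define $\varphi:\text{Lie}(Vir)^+\to W_1$ by
\[
\varphi(L_{(m)})=c\,L_{m-1}, \qquad m\in\Z_+,
\]
where $c$ is a nonzero constant to be fixed. Compute both sides on basis elements:
\[
\varphi([L_{(m)},L_{(n)}])=(m-n)\,c\,L_{m+n-2},
\]
\[
[\varphi(L_{(m)}),\varphi(L_{(n)})]=c^2\big((m-1)-(n-1)\big)L_{m+n-2}=c^2(m-n)L_{m+n-2}.
\]
The indices agree, since $(m-1)+(n-1)=m+n-2$, which is exactly the shifted index of $L_{(m+n-1)}$. So the only condition is $c^2=c$, giving $c=1$. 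The map $\varphi(L_{(m)})=L_{m-1}$ is then a bracket-preserving linear bijection, because it sends one basis onto the other.

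The only step requiring any care is confirming that $\{L_{(m)}\}_{m\in\Z_+}$ really is a basis with the stated bracket. This follows from the definition of the annihilation Lie algebra. Since $Vir=\C[\partial]L$, the relation $(\partial a)_{(n)}=-na_{(n-1)}$ shows that $L_{(m)}$ spans. From $[L_\lambda L]=(\partial+2\lambda)L$ we read off $L_{(0)}L=\partial L$ and $L_{(1)}L=2L$. Substituting into the bracket formula gives
\[
[L_{(m)},L_{(n)}]=(\partial L)_{(m+n)}+2m\,L_{(m+n-1)}=(m-n)L_{(m+n-1)}.
\]
Here $(\partial L)_{(m+n)}=-(m+n)L_{(m+n-1)}$, so the total coefficient is $-(m+n)+2m=m-n$, as required. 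Linear independence is taken from the preceding example, which states the basis.

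With this in place, $\varphi$ is the desired isomorphism $\text{Lie}(Vir)^+\cong W_1$.
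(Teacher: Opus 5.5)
Your proposal is correct and is the paper's proof: the paper uses the same map $\varphi(L_{(m)})=L_{m-1}$ and observes that it preserves brackets. One correction: your aside that a pure shift ``does not match the structure constants directly'' is mistaken, as your own computation shows, so the auxiliary constant $c$ is unnecessary.
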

\begin{proof}
Let $\varphi: \text{Lie}( Vir)^+\rightarrow W_1 $ be the linear map defined by $\varphi(L_{(i)})=L_{i-1}$ for each $i\in \Z_{+}$. Then it is easy to see that $\varphi$ is an isomorphism of Lie algebras.
\end{proof}

By Proposition \ref{pro-1}, we have $\text{Lie}( Vir)^e\cong \C \partial \ltimes W_1$ with
the brackets $[\partial, L_n]=(-1-n)L_{n-1}$ for any $n\in \Z_{\geq -1}$. Consequently, we have
$$\text{Lie}( Vir)^e\cong W_1\oplus \C(\partial-L_{-1})$$
as a direct sum of Lie algebras. Hence a module $V$ over $\text{Lie}( Vir)^e$ is naturally 
a module over $W_1$ and each module over $W_1$ can be lifted to be a module over $\text{Lie}( Vir)^e$. In fact, we can also have the following result.
\begin{cor}\label{cor-4}
A module $V$ over $W_1$ can be seen as a module over $\text{Lie}( Vir)^e$ with
\begin{eqnarray}
L_{(i)}. v=L_{i-1}. v,\;\; \partial. v=L_{(0)}.v+\xi v=L_{-1}.v+\xi v\;\;\text{for any $i\in \Z_+$ and $v\in V$,}
\end{eqnarray}
for some $\xi\in \C$. Denote this module over $\text{Lie}( Vir)^e$ by $V_\xi$. 
Moreover, we have the following conclusions:
\begin{enumerate}
\item The map $V\mapsto V_\xi$ gives a one-to-one correspondence between $W_1$-modules and $\text{Lie}( Vir)^e$-modules on which $\partial-L_{(0)}$ acts as the scalar $\xi$.
\item $V$ is finitely generated over $\C[L_{-1}]$ if and only if $V_\xi$ is finitely generated over $\C[\partial]$.
\item $V$ is free of rank $n$ over $\C[L_{-1}]$ if and only if $V_\xi$ is free of rank $n$ over $\C[\partial]$.
\item $V$ is an irreducible module over $W_1$ if and only if $V_\xi$ is an irreducible  module over $\text{Lie}( Vir)^e$.
\item Any irreducible module over $\text{Lie}(Vir)^e$ is isomorphic to some $V_{\xi}$, where $V$ is an irreducible module over $W_1$ and $\xi\in \C$.
\end{enumerate}
\end{cor}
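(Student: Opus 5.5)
The plan is to work entirely through the Lie algebra isomorphism $\varphi$ of Proposition \ref{pro-1} and the decomposition $\text{Lie}(Vir)^e\cong W_1\oplus \C(\partial-L_{-1})$ established just above. Under $\varphi$, the element $\partial-L_{(0)}$ corresponds to $\partial-L_{-1}$, and it is central. Given a $W_1$-module $V$ and $\xi\in\C$, define the action as in the statement. That this is a $\text{Lie}(Vir)^e$-module amounts to two checks. First, the $L_{(i)}$ satisfy the $W_1$ brackets via $\varphi$. Second, on $V$,
\[
[\partial,L_{(n)}]\cdot v=\partial.(L_{(n)}.v)-L_{(n)}.(\partial. v)=[L_{-1},L_{n-1}].v=-nL_{n-2}.v=-nL_{(n-1)}.v .
\]
Here the scalar $\xi$ cancels in the commutator.

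For (a): by construction $\partial-L_{(0)}$ acts on $V_\xi$ as $\xi$. Conversely, take a $\text{Lie}(Vir)^e$-module on which $\partial-L_{(0)}$ acts as $\xi$. Restricting to $\text{Lie}(Vir)^+\cong W_1$ gives a $W_1$-module $V$, and the original module equals $V_\xi$, since $\partial$ acts as $L_{(0)}+\xi=L_{-1}+\xi$. The two constructions are visibly mutually inverse.

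For (b) and (c), the key observation is that $\partial$ acts as $L_{-1}+\xi$. The substitution $L_{-1}\mapsto \partial-\xi$ is therefore a ring isomorphism $\C[L_{-1}]\to\C[\partial]$, and it identifies the two module structures on the same underlying space $V$. Consequently, a generating set over one ring generates over the other, and a basis over one ring is a basis over the other.

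For (d), the subspaces invariant under $\text{Lie}(Vir)^e$ are exactly those invariant under $\{L_{(i)}\}$ together with $\partial$. Since $\partial$ acts as $L_{(0)}+\xi$, these coincide with the $W_1$-submodules, so irreducibility transfers in both directions.

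For (e), which I expect to be the only nontrivial point: let $M$ be an irreducible $\text{Lie}(Vir)^e$-module. The difficulty is showing that the central element $z=\partial-L_{(0)}$ acts by a scalar. This requires a Schur-lemma argument, and modules may be infinite-dimensional. I would argue as follows. Since $\text{Lie}(Vir)^e$ has countable dimension, its universal enveloping algebra has countable dimension, and hence so does $M=U\cdot m$. The endomorphism algebra $\mathrm{End}(M)$ is a division algebra over $\C$ (by Schur) and injects into $M$ via $f\mapsto f(m)$, so it too has countable dimension. If $z$ acted by a non-scalar, it would be transcendental over $\C$, because $\C$ is algebraically closed. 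Then the elements $(z-c)^{-1}$ for $c\in\C$ would form an uncountable linearly independent family, a contradiction. Hence $z$ acts as some $\xi\in\C$. By (a), $M\cong V_\xi$ for the $W_1$-module $V=M$, and by (d) this $V$ is irreducible.
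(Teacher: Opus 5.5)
Your proposal is correct and is essentially the argument the paper leaves implicit (it only says the proof is straightforward): the bracket check using centrality of $\partial-L_{(0)}$, the ring isomorphism $\C[L_{-1}]\cong\C[\partial]$ for (b)--(c), and the coincidence of submodules for (d) are the intended routine verifications. For (e), your countable-dimension Schur (Dixmier) argument supplies exactly the detail the paper omits, since irreducible modules here may be infinite-dimensional; just write the division algebra as $\mathrm{End}_{\text{Lie}(Vir)^e}(M)$ rather than $\mathrm{End}(M)$.
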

\begin{proof}
It is straightforward.
\end{proof}
\subsection{Generalized conformal modules and a twisted construction}
\begin{defi}\cite{K1}
A {\bf generalized conformal module} $V$ over a Lie conformal algebra $\mathcal {A}$
is a $\mathbb{C}[\partial]$-module equipped with a $\C$-bilinear map
$\mathcal {A}\times V\rightarrow
V[[\lambda]]$, $a\times v\mapsto a_\lambda v$, satisfying the following relations for any $a,b\in\mathcal {A}$, $v\in V$,
\begin{eqnarray}
&&\label{conf-1}(\partial a)_\lambda v=-\lambda a_\lambda v,\ a_\lambda(\partial
v)=(\partial+\lambda)a_\lambda v,\\
&&\label{conf-2}a_\lambda(b_\mu v)-b_\mu(a_\lambda v)=[a_\lambda b]_{\lambda+\mu}v.
\end{eqnarray}
In particular, a generalized conformal module $V$ over $\mathcal {A}$ is called {\bf conformal}, if for any $a\in \mathcal {A}$ and $v\in V$, we have
$a_\lambda v\in V[\lambda]$.
If $V$ is finitely generated over $\mathbb{C}[\partial]$, then $V$ is simply called {\bf finite}. If $a_\lambda v=0$ for any $a\in \mathcal {A}$ and $v\in V$, then $V$ is called a {\bf trivial generalized conformal module} over $\mathcal {A}$; otherwise, it is called {\bf non-trivial}.
\end{defi}

\begin{rmk}
What we call generalized conformal modules are referred to simply as modules  in \cite{K1}. The term ``generalized" is used here to stress the contrast with conformal modules.

By Eq. (\ref{conf-1}), if $\mathcal {A}=\C[\partial]\otimes A$ and $V=  \C[\partial]\otimes M$ are free as $\C[\partial]$-modules over the vector spaces $A$ and $M$ respectively,  then  the generalized conformal module $V$ over $\mathcal {A}$ is completely determined by the actions of $A$ on $M$.

\delete{It should be also emphasized that a generalized conformal module over a Lie conformal algebra $\mathcal {A}$  may not be conformal. As we know, conformal modules over Lie conformal algebras have been well investigated. In this paper, we mainly consider those generalized conformal modules over Lie conformal algebras which are not conformal.}
\end{rmk}

Note that for an arbitrary polynomial $q(\partial)\in \mathbb{C}[\partial]$,
$$e^{q(\lambda+\partial)-q(\partial)}:=\sum_{k=0}^\infty \frac{(q(\lambda+\partial)-q(\partial))^k}{k!}$$ is well defined in $\mathbb{C}[\partial][[\lambda]]$. Then we present a twisted method to construct generalized conformal modules over Lie conformal algebras.
\begin{pro}\label{Pro-tw1}
Let $\mathcal{A}$ be a Lie conformal algebra, $V$ be a generalized conformal module over $\mathcal{A}$ with the action $a_\lambda v$ for any $a\in \mathcal{A}$, $v\in V$ and $q(\partial)\in \mathbb{C}[\partial]$. 
Define
\begin{eqnarray}\label{new-action}
\overline{a_\lambda v}:=e^{q(\lambda+\partial)-q(\partial)}a_\lambda v\;\;\;\;\;\;\text{for any $a\in \mathcal{A}$, $v\in V$.}
\end{eqnarray}
Then $V$ is a generalized conformal module over $\mathcal{A}$ with the new action. Denote this generalized conformal module by $V_{q(\partial)}$. Moreover, $V_{q(\partial)}$ is irreducible if and only if $V$ is irreducible.
\end{pro}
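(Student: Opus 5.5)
The plan is to verify the axioms \eqref{conf-1} and \eqref{conf-2} for the new action directly. The key tool is the multiplicative behaviour of the twisting factor. Write
$$E_\lambda(\partial):=e^{q(\lambda+\partial)-q(\partial)}\in\mathbb{C}[\partial][[\lambda]].$$
We use the convention that for $f(\partial)\in\mathbb{C}[\partial][[\lambda]]$ and $v\in V$, the product $f(\partial)v$ means $\partial$ acting on $V$. We also use the rule $a_\lambda(f(\partial)v)=f(\partial+\lambda)a_\lambda v$, which follows from \eqref{conf-1} by linearity and $\lambda$-adic continuity.

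First, well-definedness. Since $q(\lambda+\partial)-q(\partial)\in\lambda\mathbb{C}[\partial][\lambda]$, each power $(q(\lambda+\partial)-q(\partial))^k$ lies in $\lambda^k\mathbb{C}[\partial][\lambda]$. Hence the exponential series converges in $\mathbb{C}[\partial][[\lambda]]$, and $\overline{a_\lambda v}\in V[[\lambda]]$.

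Sesquilinearity is immediate:
\begin{itemize}
\item $\overline{(\partial a)_\lambda v}=E_\lambda(\partial)(-\lambda a_\lambda v)=-\lambda\overline{a_\lambda v}$;
\item $\overline{a_\lambda(\partial v)}=E_\lambda(\partial)(\partial+\lambda)a_\lambda v=(\partial+\lambda)\overline{a_\lambda v}$, since multiplication operators in $\partial$ commute.
\end{itemize}

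For the Jacobi-type identity, compute
$$\overline{a_\lambda}\,\overline{b_\mu v}=E_\lambda(\partial)\,a_\lambda\bigl(E_\mu(\partial)b_\mu v\bigr)=E_\lambda(\partial)E_\mu(\partial+\lambda)\,a_\lambda b_\mu v.$$
The exponent of this product telescopes:
$$\bigl(q(\lambda+\partial)-q(\partial)\bigr)+\bigl(q(\mu+\lambda+\partial)-q(\lambda+\partial)\bigr)=q(\lambda+\mu+\partial)-q(\partial).$$
Because exponentials of commuting elements multiply (legitimate here, as all exponents lie in the ideal generated by $\lambda,\mu$ in $\mathbb{C}[\partial][[\lambda,\mu]]$), we get $E_\lambda(\partial)E_\mu(\partial+\lambda)=E_{\lambda+\mu}(\partial)$. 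This factor is symmetric in $\lambda,\mu$, so the same factor appears in $\overline{b_\mu}\,\overline{a_\lambda v}$. Therefore
$$\overline{a_\lambda}\,\overline{b_\mu v}-\overline{b_\mu}\,\overline{a_\lambda v}=E_{\lambda+\mu}(\partial)[a_\lambda b]_{\lambda+\mu}v=\overline{[a_\lambda b]_{\lambda+\mu}v},$$
where the last step expands $[a_\lambda b]$ as a finite sum $\sum_j \lambda^j c_j$ and applies the definition to each $c_j$.

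The main technical care is making the substitution rule and the manipulations in two formal variables rigorous. The cleanest route is to argue coefficientwise: every identity involved is an identity in $V[[\lambda,\mu]]$, and each coefficient involves only finitely many terms.

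For irreducibility, the key observation is that the two actions generate the same $\mathbb{C}[\partial]$-submodules. Since $E_\lambda(\partial)$ is invertible in $\mathbb{C}[\partial][[\lambda]]$ with inverse $E_{-\lambda}$-type factor $e^{-(q(\lambda+\partial)-q(\partial))}$, we have
$$a_\lambda v=e^{-(q(\lambda+\partial)-q(\partial))}\,\overline{a_\lambda v}.$$
Hence a $\mathbb{C}[\partial]$-submodule $U$ satisfies $a_\lambda U\subseteq U[[\lambda]]$ if and only if $\overline{a_\lambda U}\subseteq U[[\lambda]]$. Indeed, multiplying an element of $U[[\lambda]]$ by an element of $\mathbb{C}[\partial][[\lambda]]$ stays in $U[[\lambda]]$, because each coefficient is a finite $\mathbb{C}[\partial]$-combination. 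So the submodules for the two actions coincide, and $V_{q(\partial)}$ is irreducible exactly when $V$ is.
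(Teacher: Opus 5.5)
Your proof is correct and takes essentially the same route as the paper. Sesquilinearity is checked directly. The Jacobi-type axiom reduces to the telescoping identity $e^{q(\lambda+\partial)-q(\partial)}e^{q(\lambda+\mu+\partial)-q(\lambda+\partial)}=e^{q(\lambda+\mu+\partial)-q(\partial)}$. Irreducibility transfers because the twisting factor is invertible in $\mathbb{C}[\partial][[\lambda]]$.

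One harmless wording slip: the inverse $e^{-(q(\lambda+\partial)-q(\partial))}$ is not of ``$E_{-\lambda}$-type'', since $E_{-\lambda}(\partial)=e^{q(\partial-\lambda)-q(\partial)}$ is a different series, but the formula you actually use is the right one.
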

\begin{proof}
Obviously, the action defined by Eq. (\ref{new-action}) satisfies Eq. (\ref{conf-1}). Let $a$, $b\in\mathcal{A}$ and $v\in V$. Then we obtain
\begin{eqnarray*}
&&\overline{a_\lambda (\overline{b_\mu v})}-\overline{b_\mu (\overline{a_\lambda v})}-\overline{[a_\lambda b]_{\lambda+\mu}v}\\
&&\quad=\overline{a_\lambda (e^{q(\mu+\partial)-q(\partial)}b_\mu v)}-\overline{b_\mu (e^{q(\lambda+\partial)-q(\partial)}a_\lambda v)}-e^{q(\lambda+\mu+\partial)-q(\partial)}[a_\lambda b]_{\lambda+\mu}v\\
&&\quad= e^{q(\lambda+\partial)-q(\partial)}e^{q(\lambda+\mu+\partial)-q(\lambda+\partial)}a_\lambda(b_\mu v)-e^{q(\lambda+\partial+\mu)-q(\mu+\partial)}e^{q(\mu+\partial)-q(\partial)}b_\mu(a_\lambda v)-e^{q(\lambda+\mu+\partial)-q(\partial)}[a_\lambda b]_{\lambda+\mu}v\\
&&\quad=e^{q(\lambda+\mu+\partial)-q(\partial)}(a_\lambda(b_\mu v)-b_\mu(a_\lambda v)-[a_\lambda b]_{\lambda+\mu}v)\\
&&\quad=0.
\end{eqnarray*}
Therefore, $V$ is a generalized conformal module over $\mathcal{A}$ with the new action given by Eq. (\ref{new-action}).

Since $e^{q(\lambda+\partial)-q(\partial)}$ is invertible in $\mathbb{C}[\partial][[\lambda]]$, it is easy to see that $V_{q(\partial)}$ is irreducible if and only if $V$ is irreducible.
\end{proof}
\begin{rmk}\label{Rmk-tw}
Actually, twisting by $q(\partial)$ not only gives an invertible map between generalized conformal modules, but also induces a functor from the category of generalized conformal modules to itself.
Assume that $\phi$ is a module homomorphism from $U$ to $V$. Then it can be checked directly that
$$\phi_{q(\partial)}: U_{q(\partial)}\rightarrow V_{q(\partial)}, v\mapsto \phi(v)$$ is also a module homomorphism from $U_{q(\partial)}$ to $V_{q(\partial)}$.
Denote this functor by $\Phi_{q(\partial)}$. Then we have $$\Phi_{p(\partial)}\Phi_{q(\partial)}=\Phi_{p(\partial)+q(\partial)}\;\;\text{for any $p(\partial), q(\partial)\in\C[\partial]$,}$$
and $\Phi_{q(\partial)}$ is the identity functor if $q(\partial)\in\C$.
Hence, $\Phi_{q(\partial)}$ is an isomorphism from the category of generalized conformal modules to itself.
\end{rmk}
\begin{rmk}
By Proposition \ref{Pro-tw1}, we can obtain generalized conformal modules from conformal modules by the construction given above. Therefore, the theory of generalized conformal modules is much richer than that of conformal modules.
\end{rmk}

\begin{pro}\label{pro-2}\cite{CK}
Each non-trivial conformal module over $Vir$, which is free of rank one as a $\C[\partial]$-module, has the form
$$M_{a,b}=\C[\partial]v, \ L_\lambda v=(\partial+a \lambda +b)v$$
for some $a,b\in\C$.  $M_{a,b}$ is irreducible if and only if $a\neq 0$. Moreover, each finite non-trivial irreducible conformal module over $Vir$ is of rank one.
\end{pro}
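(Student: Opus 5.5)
The first assertion is a direct computation, and I would carry it out as follows. Let $V=\C[\partial]v$ be a non-trivial conformal module of rank one and write $L_\lambda v=f(\partial,\lambda)v$ with $f\in\C[\partial,\lambda]$, $f\neq 0$. By Eq. (\ref{conf-1}) we have $L_\lambda(g(\partial)v)=g(\partial+\lambda)f(\partial,\lambda)v$. Then Eq. (\ref{conf-2}) with $[L_\lambda L]=(\partial+2\lambda)L$, and hence $[L_\lambda L]_{\lambda+\mu}=(\lambda-\mu)L_{\lambda+\mu}$, becomes the functional equation
\begin{eqnarray*}
f(\partial+\lambda,\mu)f(\partial,\lambda)-f(\partial+\mu,\lambda)f(\partial,\mu)=(\lambda-\mu)f(\partial,\lambda+\mu).
\end{eqnarray*}

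I would solve this equation in three steps.
\begin{enumerate}
\item Specialize $\mu=0$. This gives $f(\partial,\lambda)\bigl(f(\partial+\lambda,0)-f(\partial,0)-\lambda\bigr)=0$. Since $\C[\partial,\lambda]$ is a domain, $f(\partial+\lambda,0)-f(\partial,0)=\lambda$, which forces $f(\partial,0)=\partial+b$ for some $b\in\C$.
\item Write $f=\sum_{i=0}^m f_i(\partial)\lambda^i$ with $f_0=\partial+b$ and $f_m\neq0$, and compare coefficients of suitable monomials $\lambda^i\mu^j$. The coefficient of $\mu^1$ gives a linear recursion expressing $f_i'$-type terms through $f_1$. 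Comparing the top-degree terms $\lambda^m\mu^m$ (or $\lambda^{m}\mu^{1}$) should force $m\le 1$ and $f_1$ to be a constant $a$.
\item Conclude that $f=\partial+a\lambda+b$, and check directly that this $f$ satisfies the equation.
\end{enumerate}
I expect step 2 to be a short but somewhat fiddly degree bookkeeping.

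For irreducibility, note that any nonzero $\C[\partial]$-submodule of $\C[\partial]v$ that is also a submodule has the form $\C[\partial]p(\partial)v$ with $p\ne 0$. If $a=0$, then $\C[\partial](\partial+b)v$ is stable, since $L_\lambda((\partial+b)v)=(\partial+\lambda+b)(\partial+b)v$, and it is proper, so $M_{0,b}$ is reducible. If $a\neq0$ and $\deg p\ge1$, stability requires $p(\partial)\mid p(\partial+\lambda)(\partial+a\lambda+b)$ in $\C[\partial,\lambda]$. Evaluating at a root $\partial=r$ of $p$ gives $p(r+\lambda)(r+a\lambda+b)=0$ as a polynomial in $\lambda$. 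This is impossible because both factors are nonzero polynomials in $\lambda$ (here we use $a\neq0$). Hence $M_{a,b}$ is irreducible when $a\neq 0$.

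The last assertion, that finite non-trivial irreducible conformal modules have rank one, is the main obstacle. I would follow the Cheng–Kac strategy via Corollary \ref{cor-4}.
\begin{enumerate}
\item Since $Vir$ acts trivially on the torsion of a conformal module, a finite non-trivial irreducible $V$ is free of finite rank over $\C[\partial]$.
\item The module $V$ is a conformal module over $\text{Lie}(Vir)^e\cong W_1\oplus\C(\partial-L_{-1})$. Use the filtration $W_1\supset W_{1,\ge j}=\bigoplus_{i\ge j}\C L_i$ and the locality condition $L_{(n)}v=0$ for $n\gg0$.
\item Show that the subspace $V^0=\{v\mid W_{1,\ge 1}v=0\}$ is nonzero and finite-dimensional. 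Since $W_{1,\ge1}$ is an ideal of $W_{1,\ge0}$, the operator $L_0$ preserves $V^0$; it therefore has an eigenvector $v_0$ there with $L_0v_0=-\Delta v_0$, say.
\item Argue that $\C[\partial]v_0$ is then a submodule, using $\partial=L_{-1}+\xi$. By irreducibility it equals $V$, giving rank one and identifying $V$ with some $M_{a,b}$.
\end{enumerate}
The delicate point is step 3: showing $V^0\neq 0$ and that it is finite-dimensional. For this I would use the fact that, by locality, the finitely many free generators are killed by $W_{1,\ge N}$. Then the finite-dimensional nilpotent quotient $W_{1,\ge1}/W_{1,\ge N}$ acts on a finite-dimensional $\partial$-degree-filtered piece, and Engel's theorem supplies a common null vector.
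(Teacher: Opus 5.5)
The paper gives no proof of this proposition; it is quoted from \cite{CK}, so your proposal has to stand on its own.

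\textbf{Rank-one classification and irreducibility.} These parts are sound. Step 2 closes along the line you indicate. Write $D_1f$ and $D_2f$ for the derivatives of $f(\partial,\lambda)$ in its two arguments. The $\mu^1$-coefficient of the functional equation is
\[
(\partial+b)D_1f+\lambda D_2f=\bigl(1+f_1(\partial+\lambda)-f_1(\partial)\bigr)f .
\]
If $\deg f_1\ge 1$, the right side has $\lambda$-degree $m+\deg f_1>m$, so $f_1=a$ is constant. Then $(\partial+b)f_i'=(1-i)f_i$, which forces $f_i=0$ for $i\ge 2$.

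\textbf{The gap: step 3 of the rank-one argument.} This step is the heart of the Cheng--Kac theorem, and the mechanism you propose does not work.
\begin{enumerate}
\item The span of the $\partial^je_i$ with $j\le k$ is not stable under $W_{1,\ge1}$. Indeed $L_\lambda(\partial^je_i)=(\partial+\lambda)^jL_\lambda e_i$, and the coefficients of $L_\lambda e_i$ can have arbitrary $\partial$-degree.
\item That span is also not killed by $W_{1,\ge N}$, because $\partial$ raises the locality order. In $M_{a,b}$ with $a\neq0$ one has $L_{(m)}v=0$ for $m\ge2$, but $L_{(2)}\partial v=2av\neq0$. So $W_{1,\ge1}/W_{1,\ge N}$ does not act on the span at all.
\item Engel's theorem needs nilpotent \emph{operators}, and nilpotency of the Lie algebra does not give this. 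For instance $L_1,L_2\notin[W_{1,\ge1},W_{1,\ge1}]=W_{1,\ge3}$ could act by nonzero scalars. Nilpotency has to come from $L_0$-stability.
\item Most importantly, your argument never uses that $V$ is irreducible and non-trivial. Yet finite-dimensionality of $V^0$ is false without this: for the trivial module $\C[\partial]$ one has $V^0=V$.
\end{enumerate}

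\textbf{The missing lemma.} Put $K_n=\{v\in V\mid L_{(m)}v=0 \text{ for all } m>n\}$.
\begin{enumerate}
\item $K_{-1}=\{v\mid L_\lambda v=0\}$ is $\partial$-stable, hence a submodule with trivial action. So $K_{-1}=0$ by irreducibility and non-triviality.
\item From $L_{(m)}(\partial v)=\partial L_{(m)}v+mL_{(m-1)}v$ and locality (descending induction on $m$), one gets $K_{n+1}\cap\partial V=\partial K_n$.
\item Since $\partial$ is injective on the free module $V$, this gives $\dim K_{n+1}\le\dim K_n+\dim V/\partial V=\dim K_n+\mathrm{rank}\,V$. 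Hence every $K_n$ is finite-dimensional.
\item Some $K_n\neq0$, because $V=\bigcup_nK_n$ by locality.
\item $K_n$ is stable under $\mathrm{span}\{L_{(m)}\mid m\ge1\}=W_{1,\ge0}$ and under the central element $\partial-L_{(0)}$.
\item Each $L_{(m)}$ with $m\ge2$ lowers generalized $L_{(1)}$-eigenvalues by $m-1$. So $W_{1,\ge1}$ acts by nilpotent operators on $K_n$, and Engel's theorem gives $0\neq K_1=V^0$, which is finite-dimensional by item 3.
\end{enumerate}

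\textbf{Finishing the argument.} Take $v_0\in K_1$ to be a joint eigenvector of $L_{(1)}=L_0$ and of $\partial-L_{(0)}$, with eigenvalues $-\Delta$ and $\xi$. This avoids any appeal to Schur's lemma for the central element. Then $L_\lambda v_0=(\partial-\Delta\lambda-\xi)v_0$, so $\C[\partial]v_0$ is visibly a submodule. Your step 4 then goes through and identifies $V$ with $M_{-\Delta,-\xi}$.
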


By Propositions \ref{Pro-tw1} and \ref{pro-2}, ${M_{a,b}}_{q(\partial)}$ is a generalized conformal module over $Vir$ for each $q(\partial)\in \mathbb{C}[\partial]$. For convenience, denote ${M_{a,b}}_{q(\partial)}$ by $M_{q(\partial),a,b}$. Note that $M_{q(\partial),a,b}$ is irreducible if and only if $a\neq 0$. In Section 3, we will show that any non-trivial generalized conformal module over $Vir$, which is free of rank one as a $\C[\partial]$-module, is isomorphic to $M_{q(\partial),a,b}$ for some $a$, $b\in \mathbb{C}$ and $q(\partial)\in \mathbb{C}[\partial]$.

\begin{defi}\cite{K1}
Let $\mathcal{A}$ be a Lie conformal algebra. A $\text{Lie}( \mathcal{A})^+$-module $V$ is {\bf conformal} if for any $v \in V$ and $ a \in \mathcal{A}$, there exists $N \in \mathbb{Z}_+$ such that $a_{(n)}v=0$ when $n>N$. Modules of this kind are often called {\bf restricted} modules in reference.
A $\text{Lie}( \mathcal{A})^e$-module $V$ is called {\bf conformal} if it is conformal as a $\text{Lie}( \mathcal{A})^+$-module.
\end{defi}

There is a close connection between generalized conformal modules over Lie conformal algebras and modules over the corresponding extended annihilation Lie algebras.

\begin{pro}\label{pro-3}\cite[Remark 2.9a]{K1}, \cite[Proposition 2.1, Corollary 2.1]{CK}
Let $\mathcal{A}$ be a Lie conformal algebra. Then the following statements hold.
\begin{enumerate}
\item \label{a1}Let $V$ be a generalized conformal module (resp. conformal module) over $\mathcal{A}$. For any $a\in \mathcal{A}$ and $v\in V$, set
\begin{eqnarray*}
 a_\lambda v=\sum_{j\in \mathbb{Z}_+}(a_{(j)}v)\frac{\lambda^j}{j!}.
 \end{eqnarray*}
Then $V$ is a module (resp. conformal module) over $\text{Lie}( \mathcal{A})^e$ with the action:
\begin{eqnarray}
a_{(n)}. v=a_{(n)}v,\ \ \partial. v=\partial v\;\;\;\text{for any $a\in \mathcal{A}$, $n\in \Z_+$ and $v\in V$.}
\end{eqnarray}
\item \label{a2}If $V$ is a module (resp. conformal module) over $\text{Lie}( \mathcal{A})^e$, then $V$ is also a generalized conformal module (resp. conformal module) over $\mathcal{A}$ with the $\lambda$-action:
\begin{eqnarray}\label{eq-2}
 \partial v=\partial.v,\;\;a_\lambda v=\sum_{j\in \mathbb{Z}_+}(a_{(j)}.v)\frac{\lambda^j}{j!}\;\;\;\text{for any $a\in \mathcal{A}$ and $v\in V$.}
\end{eqnarray}
\item \label{a3}$V$ is an irreducible generalized conformal module (resp. conformal module) over $\mathcal{A}$ if and only if $V$ is also an irreducible module (resp. conformal module) over $\text{Lie}( \mathcal{A})^e$.
\end{enumerate}
\end{pro}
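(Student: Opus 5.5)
The proof is a direct translation between the $\lambda$-action and the modes $a_{(j)}$. Both directions rest on one observation: $V[[\lambda]]\cong\prod_{j\in\Z_+}V$ via $\sum_j v_j\lambda^j/j!\mapsto (v_j)_j$. A $\C$-bilinear map $\mathcal A\times V\to V[[\lambda]]$ is therefore the same thing as a family of bilinear maps $(a,v)\mapsto a_{(j)}v$, $j\in\Z_+$. I will show that each axiom on one side, expanded coefficientwise in $\lambda$ and $\mu$, becomes exactly the corresponding defining relation on the other side. The conformal case adds only the observation that $a_\lambda v\in V[\lambda]$ holds if and only if $a_{(j)}v=0$ for $j\gg0$.

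For (a), define the modes by $a_\lambda v=\sum_j (a_{(j)}v)\lambda^j/j!$. I will first check that these respect the relations defining $\text{Lie}(\mathcal A)^+$. Linearity in $a$ is immediate. From $(\partial a)_\lambda v=-\lambda a_\lambda v$, comparing coefficients of $\lambda^n/n!$ gives $(\partial a)_{(n)}v=-n\,a_{(n-1)}v$, so the assignment descends to the span of symbols $a_{(n)}$ modulo the stated relations.

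Next come the bracket relations.
\begin{enumerate}
\item For the Lie bracket, write $[a_\lambda b]=\sum_i \frac{\lambda^i}{i!}a_{(i)}b$. Then expand
\[
[a_\lambda b]_{\lambda+\mu}v=\sum_{i,k}\frac{\lambda^i}{i!}\,\frac{(\lambda+\mu)^k}{k!}\,(a_{(i)}b)_{(k)}v .
\]
Extracting the coefficient of $\frac{\lambda^m}{m!}\frac{\mu^n}{n!}$ gives $\sum_i\binom{m}{i}(a_{(i)}b)_{(m+n-i)}v$, which matches the coefficient of the left-hand side $[a_{(m)},b_{(n)}]v$. This is the only genuine computation in the proof, and I expect it to be the main, if routine, obstacle: it is a binomial bookkeeping step.
\item For the relation with $\partial$, the second identity in (\ref{conf-1}), $a_\lambda(\partial v)=(\partial+\lambda)a_\lambda v$, gives coefficientwise $a_{(n)}\partial v-\partial a_{(n)}v=n\,a_{(n-1)}v$. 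This is exactly $[\partial,a_{(n)}]=-na_{(n-1)}$.
\end{enumerate}
Together these show $V$ is a $\text{Lie}(\mathcal A)^e$-module.

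For (b), I will run the same coefficient identities backwards. Given a $\text{Lie}(\mathcal A)^e$-module, define $a_\lambda v$ by (\ref{eq-2}); the sum lies in $V[[\lambda]]$ with no convergence issue. The relation $(\partial a)_{(n)}=-na_{(n-1)}$ yields the first identity of (\ref{conf-1}), the bracket with $\partial$ yields the second, and the bracket formula yields (\ref{conf-2}) by the same coefficient extraction as above. Restrictedness translates to polynomiality in both directions.

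For (c), the constructions in (a) and (b) are mutually inverse. Since a $\C$-subspace $U\subseteq V$ is stable under $\partial$ and all $a_\lambda$ (coefficientwise) exactly when it is stable under $\partial$ and all $a_{(j)}$, the submodules of the two structures coincide. Hence irreducibility on one side is equivalent to irreducibility on the other.
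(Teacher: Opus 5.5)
Your argument is correct. The dictionary $a_\lambda v=\sum_j (a_{(j)}v)\lambda^j/j!$ works as you say: the binomial extraction gives exactly $\sum_i\binom{m}{i}(a_{(i)}b)_{(m+n-i)}v$ as the coefficient of $\frac{\lambda^m}{m!}\frac{\mu^n}{n!}$, and the $\partial$-relations and the identification of submodules also check out. The paper gives no proof of this proposition and only cites Kac and Cheng--Kac, where the equivalence is obtained by this same coefficientwise comparison, so your proposal supplies the intended argument.
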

\subsection{Finite irreducible generalized conformal modules}
Assume that $V$ is a generalized conformal module over a Lie conformal algebra $\mathcal{A}$.
Set
 $$\text{Tor}~(V):=\{v\in V\,|\,f(\partial)v=0,\ \exists f(\partial)\in\C[\partial]\setminus\{0\}\}.$$
Next, we investigate the action of $\mathcal{A}$ on $\text{Tor}~(V)$. Referring to \cite{DK}, we have the following result.
\begin{pro}\cite{DK}
If $V$ is a conformal module over a Lie conformal algebra $\mathcal{A}$, then we have
$\mathcal{A}_\lambda (\text{Tor}~(V))=0$.
\end{pro}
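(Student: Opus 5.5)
The plan is to exploit the fact that, for a conformal module, $a_\lambda v$ is a polynomial in $\lambda$, and to compare degrees in $\lambda$ on both sides of the sesquilinearity relation.

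Let $v\in\text{Tor}~(V)$ and $a\in\mathcal A$. Choose a nonzero polynomial $f(\partial)\in\C[\partial]$ with $f(\partial)v=0$. Applying the second relation in Eq. (\ref{conf-1}) repeatedly, and using linearity, gives
\begin{eqnarray*}
0=a_\lambda(f(\partial)v)=f(\partial+\lambda)\,a_\lambda v .
\end{eqnarray*}
Since $V$ is conformal, $a_\lambda v=\sum_{j=0}^N \lambda^j v_j$ lies in $V[\lambda]$, with $v_j\in V$.

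We may assume $a_\lambda v\neq 0$ and $v_N\neq 0$, and seek a contradiction. Write $f(\partial)=\sum_{k=0}^d c_k\partial^k$ with $c_d\neq 0$. Expanding $f(\partial+\lambda)$ in powers of $\lambda$ gives $f(\partial+\lambda)=c_d\lambda^d+(\text{terms of lower degree in }\lambda)$, whose coefficients are polynomials in $\partial$. The coefficient of the top power $\lambda^{N+d}$ in $f(\partial+\lambda)a_\lambda v$ is therefore exactly $c_d v_N$. This coefficient must vanish, so $v_N=0$, which is a contradiction. Hence $a_\lambda v=0$ for every $a\in\mathcal A$, that is, $\mathcal A_\lambda(\text{Tor}~(V))=0$.

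The only delicate point is the top-degree comparison. It works because the leading coefficient of $f(\partial+\lambda)$ in $\lambda$ is the nonzero scalar $c_d$, not an operator in $\partial$, so no torsion phenomena in $V$ can cancel it.

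This is also exactly where the argument needs $a_\lambda v\in V[\lambda]$. For a power series in $\lambda$ there is no top degree, and the argument breaks down. Compare with the lowest-degree term: its coefficient would be $f(\partial)v_0$, which can vanish when $v_0$ is itself torsion. This is consistent with the paper's later claim that generalized conformal modules can have a non-trivial action on torsion.
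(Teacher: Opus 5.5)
Your proof is correct and is essentially the paper's argument. Like the paper, you apply $a_\lambda$ to $f(\partial)v=0$ and use sesquilinearity to get $f(\partial+\lambda)a_\lambda v=0$. You then compare the coefficient of the top power $\lambda^{N+d}$, which is the nonzero scalar leading coefficient of $f$ times the top coefficient $v_N$ of the polynomial $a_\lambda v$, so $v_N=0$.
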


For completeness, we present the proof.
For any $x\in \mathcal{A}, v\in \text{Tor}~(V)$,
write $x_\lambda v=\sum_i v_i \lambda^i$, where $v_i\in V$. Note that when $i>>0$, we have $v_i=0$.
If $x_\lambda v\neq 0$, then let $n:=\text{max}\{i\,|\,v_i\neq0\}$.
Since $v\in \text{Tor}~(V)$, there exists some nonzero $f(\partial)\in\C[\partial]$ such that $f(\partial)v=0$.
Denote $f(\partial)=\sum_j a_j \partial^j$ and let $m:=\text{max}\{j\,|\,a_j\neq 0\}$.
Then we have
$$0=x_\lambda (f(\partial)v)=f(\partial+\lambda)x_\lambda v=f(\partial+\lambda)\sum_i v_i \lambda^i.$$
Comparing the coefficient of $\lambda^{m+n}$, we have
$a_mv_n=0$, a contradiction. Therefore, $x_\lambda v=0$ for any $x\in \mathcal{A}$ and $v\in \text{Tor}~(V)$.

For a generalized conformal module, the above method may not hold. We present an example to show that {\bf for a generalized conformal module $V$ which is not conformal, $\mathcal{A}_\lambda (\text{Tor}~(V))$ may not be equal to $0$.}

\begin{ex}Let $\C[\partial]$ act on $\C[t]$ by
$\partial f(t)=\frac{d}{dt}f(t)=f'(t)$, where $f(t)\in \C[t]$. Then $\C[t]$ becomes a module over $\C[\partial]$.
Define $L_\lambda f(t)=e^{-\lambda t}f'(t)$ for any $f(t)\in \C[t]$.
One can check it directly that $\C[t]$ is a generalized conformal module over $Vir$. But
$\text{Tor}~(\C[t])=\C[t]$ and $L_\lambda \C[t]\neq 0$.
\end{ex}

For any $f(\partial)\in\C[\partial]$, define $V_f=\{v\in V\,|\,f(\partial)^nv=0\;\;\text{for some $n\in\Z_+$}\}$.
\begin{lem}\label{lem-tor1}
$V_f$ is a submodule of $V$.
\end{lem}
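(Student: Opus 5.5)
We must show that $V_f$ is a $\C[\partial]$-submodule and is stable under the $\lambda$-action, meaning $a_\lambda v\in V_f[[\lambda]]$ for every $a\in\mathcal A$ and $v\in V_f$. Both $\C[\partial]$-stability and closure under addition are immediate: $\partial$ commutes with $f(\partial)$, and if $f(\partial)^nv=0$ and $f(\partial)^mw=0$ then $f(\partial)^{\max(m,n)}(v+w)=0$.

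For the $\lambda$-action, take $v\in V_f$ with $f(\partial)^nv=0$ and write $a_\lambda v=\sum_{j}v_j\lambda^j$. The goal is to show that every coefficient $v_j$ lies in $V_f$. Applying the second relation of Eq. (\ref{conf-1}) repeatedly gives
\[
0=a_\lambda(f(\partial)^nv)=f(\partial+\lambda)^n a_\lambda v .
\]
We expand in powers of $\lambda$ by Taylor's formula,
\[
f(\partial+\lambda)^n=\sum_{k\ge 0} g_k(\partial)\lambda^k,\qquad g_0=f^n,\qquad g_k=\frac{1}{k!}\frac{d^k}{d\partial^k}\bigl(f^n\bigr).
\]
This is a polynomial in $\lambda$, so each coefficient of the product is a finite sum, and we obtain for each $j$
\[
f(\partial)^n v_j=-\sum_{k=1}^{j} g_k(\partial)v_{j-k}.
\]

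We then argue by induction on $j$ that $f(\partial)^{N_j}v_j=0$ for suitable exponents $N_j$. The base case is $f^nv_0=0$. For the inductive step, the key observation is that $g_k$ is a derivative of $f^n$ and is therefore divisible by $f^{n-k}$ when $k\le n$. We do not need that refinement, however. The induction hypothesis gives $f^{N_i}v_i=0$ for all $i<j$, so applying $f^{M}$ with $M=\max_{i<j}N_i$ to the right-hand side kills it, since $f$ commutes with each $g_k$. Hence $f^{n+M}v_j=0$, and we may take $N_j=n+M$. This shows every coefficient lies in $V_f$, so $a_\lambda v\in V_f[[\lambda]]$.

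The main point to handle carefully is that $a_\lambda v$ is a formal power series rather than a polynomial. This is why the top-degree argument from the conformal case cannot be used. Instead, we compare coefficients from the bottom degree upward, which works because $f(\partial+\lambda)^n$ has constant term $f(\partial)^n$. The exponents $N_j$ may grow with $j$, but that is harmless, since membership in $V_f$ only requires some finite power of $f(\partial)$ to annihilate each coefficient.
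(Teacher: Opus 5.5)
Your proposal is correct and follows essentially the same route as the paper: expand $0=f(\partial+\lambda)^n a_\lambda v$ by Taylor's formula, compare coefficients of $\lambda$ starting from the lowest degree, and show by induction on $j$ that each coefficient $v_j$ is annihilated by a power of $f(\partial)$. Your exponents $N_j=(j+1)n$ match the paper's $g(\partial)^{m+1}$ with $g=f^n$, and you also check the $\C[\partial]$-stability and additivity of $V_f$, which the paper leaves implicit.
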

\begin{proof}
Let $v\in V$. Then there exists some $ n\in\Z_+$ such that $f(\partial)^nv=0$. Denote $g(\partial)=f(\partial)^n$. Set $x_\lambda v=\sum_{i=0}^\infty v_i\lambda^i$, where $v_i\in V$. Note that $g(\partial+\lambda)=\sum_{j=0}^\infty \frac{g^{(j)}(\partial)}{j!}\lambda^j$. Then we obtain
\begin{eqnarray*}
x_\lambda (g(\partial)v)&=&g(\partial+\lambda)x_\lambda v\\
&=&\sum_{i=0}^\infty\sum_{j=0}^\infty\frac{g^{(j)}(\partial)}{j!}v_i\lambda^{i+j}\\
&=&\sum_{m=0}^\infty\sum_{j=0}^m\frac{g^{(j)}(\partial)}{j!}v_{m-j}\lambda^{m}\\
&=& 0.
\end{eqnarray*}
Therefore, we have $\sum_{j=0}^m\frac{g^{(j)}(\partial)}{j!}v_{m-j}=0$ for each $m$. Then we get
\begin{eqnarray*}
g(\partial)v_m\in \mathbb{C}[\partial]v_0+\ldots+\mathbb{C}[\partial]v_{m-1}.
\end{eqnarray*}
Note that $g(\partial)v_0=0$. Therefore, we have $g(\partial)^{m+1}v_m=f(\partial)^{(m+1)n}v_0=0$ for each $m$. Consequently, we obtain $v_m\in V_f$ for each $m$. Then we get
$x_\lambda v\in V_f[[\lambda]]$ for any $x\in \mathcal{A}$. Then this conclusion holds.
\end{proof}
\begin{cor}
$\text{Tor}~(V)$ is a submodule of $V$.
\end{cor}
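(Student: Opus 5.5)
The plan is to write $\text{Tor}~(V)$ as a union of the submodules $V_f$ from Lemma \ref{lem-tor1} and then observe that a union of submodules which is directed (or simply the sum of submodules) is again a submodule.

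First, I note that
$$\text{Tor}~(V)=\bigcup_{f(\partial)\in\C[\partial]\setminus\{0\}} V_f .$$
If $v\in \text{Tor}~(V)$, there is a nonzero $f(\partial)$ with $f(\partial)v=0$, so $v\in V_f$ with $n=1$. Conversely, if $v\in V_f$ with $f\neq 0$, then $f(\partial)^n v=0$ for some $n$. Here $f(\partial)^n$ is nonzero because $\C[\partial]$ is a domain. Hence $v\in\text{Tor}~(V)$.

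Next, I check that $\text{Tor}~(V)$ is a $\C[\partial]$-submodule. It is closed under multiplication by $\C[\partial]$, since $\C[\partial]$ is commutative. It is closed under addition: if $f(\partial)v=0$ and $g(\partial)w=0$, then $(fg)(\partial)(v+w)=0$, and $fg\neq 0$.

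Finally, take $x\in\mathcal A$ and $v\in\text{Tor}~(V)$, and choose $f\neq0$ with $v\in V_f$. By Lemma \ref{lem-tor1}, $x_\lambda v\in V_f[[\lambda]]\subseteq \text{Tor}~(V)[[\lambda]]$. So $\text{Tor}~(V)$ is stable under the $\lambda$-action and is a generalized conformal submodule.

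Nothing here is really an obstacle. The only points needing care are that $V_f$ is taken with $f\neq 0$ and that products of nonzero polynomials remain nonzero, both of which are immediate.
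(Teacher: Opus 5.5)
Your proof is correct and is the argument the paper leaves implicit when it says the corollary follows immediately from Lemma \ref{lem-tor1}: $\text{Tor}~(V)=\bigcup_{f\neq 0}V_f$, and each element's $\lambda$-action stays inside its own $V_f$. Your explicit checks of this union identity and of closure under $\C[\partial]$ just fill in the routine details the paper omits.
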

\begin{proof}It follows from Lemma \ref{lem-tor1} immediately.
\end{proof}
\begin{lem}\label{lem-tor2}If $V$ is an irreducible generalized conformal module over a Lie conformal algebra $\mathcal{A}$ and $\text{Tor}~(V)\neq 0$, then
$V=V_{\partial-\alpha}$ for some $\alpha\in\C$.
\end{lem}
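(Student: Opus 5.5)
Since $V$ is irreducible and, by the preceding Corollary, $\text{Tor}~(V)$ is a submodule of $V$, the hypothesis $\text{Tor}~(V)\neq 0$ forces $V=\text{Tor}~(V)$. The plan is to pick a suitable nonzero torsion vector, extract from its annihilator a linear factor $\partial-\alpha$, and then use Lemma \ref{lem-tor1} together with irreducibility to conclude $V=V_{\partial-\alpha}$.

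Concretely, I would take any nonzero $v\in V$. Because $v\in \text{Tor}~(V)$, there is a nonzero polynomial $f(\partial)$ with $f(\partial)v=0$, and I would choose $f$ of minimal degree. The degree is positive, since a nonzero constant cannot annihilate $v\neq 0$. Over $\C$ we factor $f(\partial)=(\partial-\alpha)g(\partial)$ for some root $\alpha\in\C$, where $\deg g<\deg f$. By minimality of $\deg f$, the vector $w:=g(\partial)v$ is nonzero. On the other hand $(\partial-\alpha)w=f(\partial)v=0$, so $w$ is a nonzero element of $V_{\partial-\alpha}$.

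By Lemma \ref{lem-tor1}, $V_{\partial-\alpha}$ is a submodule of $V$. It is also stable under $\partial$, since $\partial$ commutes with $(\partial-\alpha)^n$; hence it is a genuine generalized conformal submodule. It contains $w\neq 0$, so irreducibility of $V$ gives $V_{\partial-\alpha}=V$, which is the claim.

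I expect no real obstacle here. The only points needing care are that $\C$ is algebraically closed, which supplies the root $\alpha$, and that the minimal-degree choice guarantees $w\neq 0$. (Taking $\deg g=0$ is fine: then $w$ is a nonzero scalar multiple of $v$.)
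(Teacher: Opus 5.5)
Your proposal is correct and follows essentially the same route as the paper. Algebraic closedness of $\C$ gives a nonzero vector killed by some $\partial-\alpha$, and then Lemma \ref{lem-tor1} together with irreducibility gives $V=V_{\partial-\alpha}$. Your minimal-degree choice of $f$ just spells out the step the paper leaves implicit when it produces the nonzero $u$ with $(\partial-\alpha)u=0$.
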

\begin{proof}
If $\text{Tor}~(V)\neq 0$, then for any $v\in \text{Tor}~(V)$, there exists some nonzero $f(\partial)\in \C[\partial]$ such that $f(\partial)v=0$.
Since $\C$ is algebraically closed, $f(\partial)$ can be decomposed into a product of linear factors. So we can find some nonzero $u\in V$ and some
$\alpha\in\C$ such that $(\partial-\alpha)u=0$.
Thus, we have $V_{\partial-\alpha}\neq 0$, which is a submodule by Lemma \ref{lem-tor1}. Then the assertion follows.
\end{proof}


\begin{lem}\label{lem-tor3}
Let $V$ be a finite generalized conformal module  over a Lie conformal algebra $\mathcal{A}$.
Assume that $u\in V$ such that $(\partial-\alpha)u=0$ for some $\alpha\in\C$.
Then $\mathcal{A}_\lambda u=0$.
\end{lem}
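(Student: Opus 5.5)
The plan is to expand $a_\lambda u$ as a power series, extract a recursion from conformal sesquilinearity, and then use finiteness to bound the nilpotency order of $\partial-\alpha$ uniformly on the relevant subspace.

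\textbf{Step 1 (recursion).} Fix $a\in\mathcal{A}$ and write $a_\lambda u=\sum_{i\ge 0} v_i\lambda^i$ with $v_i\in V$. Applying Eq. (\ref{conf-1}) to $(\partial-\alpha)u=0$ gives
$$0=a_\lambda\big((\partial-\alpha)u\big)=(\partial+\lambda-\alpha)\,a_\lambda u .$$
Comparing coefficients of $\lambda^m$ yields
$$(\partial-\alpha)v_0=0,\qquad (\partial-\alpha)v_m=-v_{m-1}\quad (m\ge 1).$$
By induction this gives two facts for every $m$ and every $0\le k\le m$:
$$(\partial-\alpha)^{m+1}v_m=0,\qquad v_{m-k}=(-1)^k(\partial-\alpha)^k v_m .$$
In particular, every $v_m$ lies in $V_{\partial-\alpha}$. (This also follows from the argument of Lemma \ref{lem-tor1}.)

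\textbf{Step 2 (uniform nilpotency bound).} Use that $V$ is finite. Since $\C[\partial]$ is a Noetherian ring (indeed a PID), the submodule $V_{\partial-\alpha}$ of the finitely generated module $V$ is itself finitely generated over $\C[\partial]$. Choose generators $w_1,\dots,w_s$ of $V_{\partial-\alpha}$. Each $w_j$ is killed by some power of $\partial-\alpha$, so there is a single $N$ with $(\partial-\alpha)^N V_{\partial-\alpha}=0$.

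\textbf{Step 3 (conclusion).} For any $i\ge 0$, take $m=i+N$ and apply the identity from Step 1 with $k=N$:
$$v_i=(-1)^N(\partial-\alpha)^N v_{i+N}=0,$$
because $v_{i+N}\in V_{\partial-\alpha}$. Hence $a_\lambda u=0$ for every $a\in\mathcal{A}$.

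The only real obstacle is Step 2. Without finiteness, the nilpotency orders of the $v_m$ are unbounded; this is exactly what the example $L_\lambda f(t)=e^{-\lambda t}f'(t)$ on $\C[t]$ exploits. So the argument must invoke Noetherianity of $\C[\partial]$ to obtain the uniform exponent $N$, and the rest is the routine coefficient comparison above.
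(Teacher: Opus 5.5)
Your proof is correct and follows essentially the same route as the paper: the same recursion $(\partial-\alpha)v_m=-v_{m-1}$, then Noetherianity of $\C[\partial]$ to get a uniform nilpotency exponent, then the shift $v_i=\pm(\partial-\alpha)^N v_{i+N}=0$. The only difference is cosmetic. You obtain the exponent from finite generation of $V_{\partial-\alpha}$, whereas the paper obtains it from stabilization of the chain $(u_0)\subseteq(u_0,u_1)\subseteq\cdots$ of submodules generated by the coefficients.
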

\begin{proof}
\delete{Take $v\in Tor V$. There exists some nonzero $f(\partial)$ such that $f(\partial)v=0$.
Since $\C$ is algebraically closed, $f(\partial)$ can be decomposed into a product of linear factors. So By the discussion above, we can find some nonzero }
Denote $x_\lambda u=\sum_{i=0}^\infty u_i\lambda^i$, where $u_i\in V$ for each $i$. For convenience, denote $u_{-1}=0$.
Then we obtain
$$x_\lambda(\partial u)=(\partial+\lambda)x_\lambda u
=\sum_{i=0}^\infty (\partial u_i+u_{i-1})\lambda^i
=\sum_{i=0}^\infty (\alpha u_i)\lambda^i=\alpha x_\lambda u.$$
By comparing the coefficients of $\lambda^i$, we have $(\partial-\alpha)u_i=-u_{i-1}$ for each $i$.
We deduce that $(\partial-\alpha)^ju_m=(-1)^ju_{m-j}$ for any $j\leq m+1$.

Consider the ascending sequence of $\C[\partial]$-submodules
$$(u_0)\subseteq (u_0,u_1)\subseteq\cdots \subseteq (u_0,\cdots,u_k)\subseteq\cdots,$$
where $(u_0,\cdots,u_k)$ denotes the $\C[\partial]$-submodule generated by $u_0,\cdots,u_k$.
Since $\C[\partial]$ is a Noetherian ring, the above ascending sequence is stable, i.e.,
there exists some nonnegative integer $n$ such that $u_j\in (u_0,\cdots,u_n)$ for any $j$.
Since $(\partial-\alpha)^{n+1}(u_0,\cdots,u_n)=0$,
we deduce that $(\partial-\alpha)^{n+1}u_j=0$ for any $j$.
Then for any $i$, we have $$u_i=(\partial-\alpha)^{n+1}u_{n+1+i}=0.$$
Hence we get $x_\lambda u=0$.
\end{proof}

Let $\mathcal{A}$ be a Lie conformal algebra.
For any $\alpha\in\C$, denote by $\C_\alpha$ the one-dimensional trivial module where $\partial$ acts as the scalar $\alpha$, i.e.,
$$\C_\alpha=\C v,\;\;\; \partial v=\alpha v,\;\;\; \mathcal{A}_\lambda v=0.$$
\begin{lem}\label{lem-tor4}If $V$ is a finite irreducible generalized conformal module over a Lie conformal algebra $\mathcal{A}$ and $\text{Tor}~(V)\neq 0$. Then
$V\cong\C_\alpha$ for some $\alpha\in\C$.
\end{lem}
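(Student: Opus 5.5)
By Lemma~\ref{lem-tor2}, the hypotheses $V$ irreducible and $\text{Tor}~(V)\neq 0$ give $V=V_{\partial-\alpha}$ for some $\alpha\in\C$. So every element of $V$ is killed by some power of $\partial-\alpha$. The plan is to find a nonzero vector $u$ with $(\partial-\alpha)u=0$, show that $\C u$ is a submodule, and conclude from irreducibility that $V=\C u\cong \C_\alpha$.

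Concretely, I would set
$$K:=\{v\in V\,|\,(\partial-\alpha)v=0\}.$$
Since $V=V_{\partial-\alpha}\neq 0$, pick a nonzero $v\in V$ and let $n\geq 1$ be minimal with $(\partial-\alpha)^n v=0$. Then $u:=(\partial-\alpha)^{n-1}v$ is a nonzero element of $K$. The space $K$ is clearly stable under $\partial$, which acts on it as the scalar $\alpha$. Because $V$ is finite, Lemma~\ref{lem-tor3} applies to every element of $K$ and gives $\mathcal{A}_\lambda w=0$ for all $w\in K$.

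Next take any nonzero $w\in K$. The subspace $\C w$ is stable under $\partial$, since $\partial w=\alpha w$, and $\mathcal{A}_\lambda(\C w)=0\subseteq \C w[[\lambda]]$. So $\C w$ is a nonzero generalized conformal submodule of $V$. Irreducibility then forces $V=\C w$, with $\partial$ acting by $\alpha$ and $\mathcal{A}$ acting trivially. Hence $V\cong\C_\alpha$.

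There is essentially no obstacle here. The key analytic input, that $\mathcal{A}_\lambda$ kills $\partial$-eigenvectors in a finite module, is exactly Lemma~\ref{lem-tor3}, and that is where finiteness is used. The only point to check is that a nonzero eigenvector of $\partial$ exists. This follows from $V=V_{\partial-\alpha}$ via the minimal-exponent argument above (or directly from the proof of Lemma~\ref{lem-tor2}).
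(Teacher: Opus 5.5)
Your proof is correct and is the argument the paper intends when it says the lemma ``follows from Lemmas~\ref{lem-tor2} and~\ref{lem-tor3} immediately''. You get a nonzero $u$ with $(\partial-\alpha)u=0$, apply Lemma~\ref{lem-tor3} (using finiteness) to get $\mathcal{A}_\lambda u=0$, and conclude that $\C u$ is a nonzero submodule, so irreducibility forces $V=\C u\cong\C_\alpha$; the only difference is that you write out the submodule step the paper leaves implicit.
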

\begin{proof}
It follows from Lemmas \ref{lem-tor2} and \ref{lem-tor3} immediately.
\end{proof}
\begin{pro}\label{pro-finite}
Any finite irreducible generalized conformal module $V$ over a Lie conformal algebra $\mathcal{A}$
falls into one of the following two classes:
\begin{enumerate}
\item (trivial module) $\C_\alpha, \alpha\in\C$;
\item (non-trivial module) a free $\C[\partial]$-module of finite rank.
\end{enumerate}
\end{pro}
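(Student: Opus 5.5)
The plan is a case split on whether $\text{Tor}~(V)$ vanishes, using the structure theorem for finitely generated modules over the principal ideal domain $\C[\partial]$.

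\emph{Torsion case.} Suppose $\text{Tor}~(V)\neq 0$. Then Lemma \ref{lem-tor4} applies directly, since $V$ is finite, irreducible and has nonzero torsion. It gives $V\cong\C_\alpha$ for some $\alpha\in\C$, which is class (a). For context, the chain behind Lemma \ref{lem-tor4} is as follows. Lemma \ref{lem-tor2} shows that irreducibility forces $V=V_{\partial-\alpha}$. Lemma \ref{lem-tor3} shows that any $u$ with $(\partial-\alpha)u=0$ is annihilated by $\mathcal{A}$. Such a nonzero $u$ exists, and $\C u$ is then a submodule, because it is stable under $\partial$ and killed by every $a_\lambda$. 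Irreducibility gives $V=\C u\cong\C_\alpha$.

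\emph{Torsion-free case.} Suppose $\text{Tor}~(V)=0$. Then $V$ is a finitely generated torsion-free module over the PID $\C[\partial]$, hence free of finite rank, which is class (b). We must also have $V\neq 0$, since irreducible modules are nonzero by convention.

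\emph{Non-triviality in case (b).} It remains to check that a module in case (b) is indeed non-trivial, to justify the label. Suppose $\mathcal{A}_\lambda V=0$. Then every $\C[\partial]$-submodule is a generalized conformal submodule. A free module $V$ of rank at least one has the proper nonzero submodule $\partial V$, contradicting irreducibility. So in case (b) $V$ is non-trivial, and it is not of the form $\C_\alpha$.

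No step is a real obstacle. The only substantive input is Lemma \ref{lem-tor4}, already established, together with the standard structure theorem over $\C[\partial]$.
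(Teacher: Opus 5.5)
Your proposal is correct and follows the same route as the paper. You use the structure theorem over the principal ideal domain $\mathbb{C}[\partial]$ to separate the torsion-free part from $\mathrm{Tor}(V)$, apply Lemma~\ref{lem-tor4} when the torsion is nonzero, and otherwise conclude that $V$ is free of finite rank. Your extra check, that a module in class (b) cannot be trivial because $\partial V$ would be a proper nonzero submodule, is valid and justifies the label that the paper's short proof leaves implicit.
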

\begin{proof}
Note that $\C[\partial]$ is a principle ideal domain. Then $V$ as a $\C[\partial]$-module can be the direct sum of a free $\C[\partial]$-module of finite rank and $\text{Tor}~( V)$. Then this conclusion follows directly from Lemma \ref{lem-tor4}.
\delete{Let $V$ be a finite irreducible generalized conformal module of $\mathcal{A}$.
Define $N(V)=\{v\in V\,|\,\mathcal{A}_\lambda v=0\}$. Obviously, $N(V)$ is a submodule of $V$.
By Lemma \ref{lem-tor2}, we have $\text{Tor} V\subseteq N(V)$.
If $N(V)=0$, then $\text{Tor}~(V)= 0$, $V$ is of class (2).
If $N(V)\neq 0$, then $V=N(V)$ and $A_\lambda V=0$, i.e.,
$V$ only admits a structure of $\C[\partial]$-module.
Since $V$ is simple, $V\cong\C_a: \C_a=\C v, \partial v=a v, A_\lambda v=0$.}
\end{proof}

\begin{rmk}
By Proposition \ref{pro-finite}, in order to classify finite irreducible generalized conformal modules over a Lie conformal algebra $\mathcal{A}$, we only need to classify finite irreducible generalized conformal modules of class (b) over $\mathcal{A}$.
\end{rmk}

\section{Classification of non-trivial generalized conformal modules over $Vir$ which are free and of rank one as $\C[\partial]$-modules}
In this section, we give a classification of non-trivial generalized conformal modules over $Vir$ which are free and of rank one as $\C[\partial]$-modules and investigate the irreducibilities and isomorphism classes of these modules. Moreover, based on these results, we obtain a classification of modules over $W_1$ which are free of rank one over $\C[L_{-1}]$, which recovers a result of \cite{HCL}.

Assume that $V=\C[\partial]v$ is a non-trivial generalized conformal module over $Vir$, which is free and of rank one as a $\C[\partial]$-module.
Then we set $L_\lambda v=f(\partial, \lambda)v$, where $f(\partial, \lambda)\in \C[\partial][[\lambda]]$. By the definition of generalized conformal modules, we have
\begin{equation}\label{main}
(\lambda-\mu)f(\partial, \lambda+\mu)=f(\partial, \lambda)f(\partial+\lambda,\mu)
-f(\partial, \mu)f(\partial+\mu, \lambda).
\end{equation}
Therefore, the classification of all non-trivial generalized conformal module structures on $V$ over $Vir$ is equivalent to giving all nonzero solutions of Eq.~(\ref{main}) under the condition $f(\partial, \lambda)\in \C[\partial][[\lambda]]$. Note that by Proposition \ref{pro-2}, if $f(\partial, \lambda)\in \C[\partial][\lambda]\backslash\{0\}$, then $f(\partial, \lambda)=\partial+a \lambda+b$ for some $a$, $b\in \C$. Next, we need to find out all other nonzero solutions of Eq.~(\ref{main}) in $\C[\partial][[\lambda]]$.

Set $f(\partial, \lambda)=\sum_{i=0}^\infty f_i(\partial)\lambda^i$, where $f_i(\partial)\in \C[\partial]$ for each $i$.
Since $f(\partial, \lambda)\in \C[\partial][[\lambda]]$, this sum may be infinite, i.e., the degree of $\lambda$ in $f(\partial, \lambda)$ may be infinite. Moreover, although the degree of $\partial$ in $f_i(\partial)$ is finite for each $i$, they may not have
a common upper bound.

First, we solve the following equation
\begin{equation}\label{main2}
f(\partial, \lambda+\mu)=f(\partial, \lambda)f(\partial+\lambda,\mu),
\end{equation}
where $f(\partial, \lambda)\in \C[\partial][[\lambda]]$. Eq. (\ref{main2}) has a close connection to Eq. (\ref{main}).
\begin{lem}\label{lem-em3}
If $f(\partial, \lambda)$ is a solution of Eq.~(\ref{main}) and $g(\partial, \lambda)$ is a solution of Eq.~(\ref{main2}), then $f(\partial, \lambda)g(\partial, \lambda)
$ is a solution of Eq.~(\ref{main}).
\end{lem}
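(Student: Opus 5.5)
The plan is a direct substitution into Eq.~(\ref{main}), using the multiplicativity of $g$ from Eq.~(\ref{main2}) to pull out a common factor. Set $h(\partial,\lambda)=f(\partial,\lambda)g(\partial,\lambda)$. Everything lives in $\C[\partial][[\lambda,\mu]]$, which is a commutative ring, and the shifts $\partial\mapsto\partial+\lambda$ and $\partial\mapsto\partial+\mu$ are well defined there. So we may manipulate the products freely.

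First compute the right-hand side of Eq.~(\ref{main}) for $h$:
\begin{align*}
&h(\partial,\lambda)h(\partial+\lambda,\mu)-h(\partial,\mu)h(\partial+\mu,\lambda)\\
&\quad=f(\partial,\lambda)f(\partial+\lambda,\mu)\,g(\partial,\lambda)g(\partial+\lambda,\mu)-f(\partial,\mu)f(\partial+\mu,\lambda)\,g(\partial,\mu)g(\partial+\mu,\lambda).
\end{align*}
By Eq.~(\ref{main2}), $g(\partial,\lambda)g(\partial+\lambda,\mu)=g(\partial,\lambda+\mu)$. Applying Eq.~(\ref{main2}) with $\lambda$ and $\mu$ interchanged gives $g(\partial,\mu)g(\partial+\mu,\lambda)=g(\partial,\mu+\lambda)$. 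Both $g$-factors are therefore equal to $g(\partial,\lambda+\mu)$, and we can factor it out:
\[
g(\partial,\lambda+\mu)\bigl(f(\partial,\lambda)f(\partial+\lambda,\mu)-f(\partial,\mu)f(\partial+\mu,\lambda)\bigr).
\]

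Next apply Eq.~(\ref{main}) for $f$. The bracketed expression equals $(\lambda-\mu)f(\partial,\lambda+\mu)$, so the whole right-hand side becomes
\[
(\lambda-\mu)f(\partial,\lambda+\mu)g(\partial,\lambda+\mu)=(\lambda-\mu)h(\partial,\lambda+\mu).
\]
This is exactly Eq.~(\ref{main}) for $h$, which finishes the proof.

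The only point needing care is that Eq.~(\ref{main2}) is an identity in the two formal variables $\lambda,\mu$. Swapping them is therefore legitimate, and $g(\partial,\lambda+\mu)$ is symmetric in $\lambda,\mu$. I do not expect any real obstacle; the argument is purely formal.
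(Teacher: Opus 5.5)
Your proof is correct and matches the paper's approach: the paper simply says the identity "can be checked directly," and your computation is that check. Using Eq.~(\ref{main2}) to collapse both $g$-products to the common factor $g(\partial,\lambda+\mu)$, and then applying Eq.~(\ref{main}) for $f$, is exactly the intended verification.
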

\begin{proof}
One can check it directly.
\end{proof}
For any $h(\lambda)=\sum_{n=0}^{\infty}a_i \lambda^i\in \C[[\lambda]]$,
denote its formal derivative by
$$h'(\lambda)=(h(\lambda))'=\sum_{n=1}^{\infty}ia_i \lambda^{i-1}$$
and its $n$-th formal derivative by
$h^{(n)}(\lambda)=(h^{(n-1)}(\lambda))'$.
Then
we have $h(\lambda+\mu)\in \C[[\lambda,\mu]]$ and the formal Taylor expansion holds.
\begin{lem}\label{lem-FT}
For any $h(\lambda)\in \C[[\lambda]]$, we have
\[
h(\lambda+\mu)=\sum_{n=0}^{\infty} h^{(n)}(\lambda)\mu^{[n]},
\]
where $\mu^{[n]}=\frac{ \mu^{n}}{n!}$.
\end{lem}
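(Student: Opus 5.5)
The plan is to reduce the identity to the binomial theorem, applied one monomial at a time, and then sum coefficientwise. Write $h(\lambda)=\sum_{i\ge 0}a_i\lambda^i$. Both sides of the claimed identity live in $\C[[\lambda,\mu]]$. The left-hand side $h(\lambda+\mu)=\sum_i a_i(\lambda+\mu)^i$ is a well-defined formal power series, because $(\lambda+\mu)^i$ is homogeneous of total degree $i$. Hence each monomial $\lambda^p\mu^q$ receives a contribution only from the single term $i=p+q$. On the right-hand side, $h^{(n)}(\lambda)\mu^{[n]}$ contains only monomials with $\mu$-degree exactly $n$. So for fixed $(p,q)$ only the term $n=q$ contributes, and the right-hand sum is also well defined.

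The key step is to compare the coefficient of $\lambda^p\mu^q$ on the two sides.

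On the left, the only contributing term is $a_{p+q}(\lambda+\mu)^{p+q}$. By the binomial theorem its coefficient of $\lambda^p\mu^q$ is $a_{p+q}\binom{p+q}{q}$.

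On the right, first compute the formal derivative by induction on $n$ from the definition of $h'$:
\[
h^{(n)}(\lambda)=\sum_{i\ge n} i(i-1)\cdots(i-n+1)\,a_i\lambda^{i-n}=\sum_{i\ge n}\frac{i!}{(i-n)!}a_i\lambda^{i-n}.
\]
Taking $n=q$ and $i=p+q$, the coefficient of $\lambda^p$ in $h^{(q)}(\lambda)$ is $\frac{(p+q)!}{p!}a_{p+q}$. Multiplying by the factor $\frac1{q!}$ from $\mu^{[q]}$ gives $\frac{(p+q)!}{p!\,q!}a_{p+q}=a_{p+q}\binom{p+q}{q}$. This agrees with the left-hand coefficient, which proves the identity.

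There is no real obstacle here. The only point that needs care is convergence: both infinite sums must define elements of $\C[[\lambda,\mu]]$, so that coefficientwise comparison is legitimate. The homogeneity and $\mu$-degree observations above settle this. Alternatively, the argument is just linearity together with the polynomial case $h(\lambda)=\lambda^i$, extended via this finiteness in each bidegree.
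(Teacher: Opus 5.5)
Your proposal is correct and is exactly the direct verification the paper leaves to the reader; its proof consists only of ``One can check it directly.'' You compare coefficients of $\lambda^p\mu^q$ via the binomial theorem, and you check that both sides are well-defined elements of $\C[[\lambda,\mu]]$ through homogeneity and $\mu$-degree, which is all that this check amounts to.
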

\begin{proof}
One can check it directly.
\end{proof}
Note that a power series $h(\lambda)\in \C[[\lambda]]$ is invertible if and only if $h(0)\neq0$.
\begin{lem}\label{lem-em1}
Assume that $h(\lambda)\in \C[[\lambda]]$ and $p(\lambda)\in \C[\lambda]$. If
\begin{eqnarray}
\label{eqq-m2}h'(\lambda)=h(\lambda)p(\lambda),
\end{eqnarray} then
$h(\lambda)=0$ or $h(\lambda)=e^{q(\lambda)}$ for some $q(\lambda)\in \C[\lambda]$ such that $q'(\lambda)=p(\lambda)$.
\end{lem}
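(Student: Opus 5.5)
Let $q(\lambda)\in\C[\lambda]$ be the polynomial with $q(0)=0$ and $q'(\lambda)=p(\lambda)$; it exists because $p$ is a polynomial. Since $q$ has zero constant term, $e^{q(\lambda)}$ and $e^{-q(\lambda)}$ are well-defined elements of $\C[[\lambda]]$ and are mutually inverse. The formal chain rule gives $(e^{\pm q(\lambda)})'=\pm p(\lambda)e^{\pm q(\lambda)}$. To check this, differentiate the series $\sum_k q(\lambda)^k/k!$ termwise, using $(q^k)'=kq^{k-1}q'$ in $\C[\lambda]$; the exchange of derivative and sum is legitimate because each coefficient of $\lambda^n$ receives contributions from only finitely many $k$.

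Now put $g(\lambda)=h(\lambda)e^{-q(\lambda)}\in\C[[\lambda]]$. The Leibniz rule holds for formal derivatives in $\C[[\lambda]]$. Together with Eq.~(\ref{eqq-m2}) it gives
$$g'(\lambda)=h'(\lambda)e^{-q(\lambda)}-h(\lambda)p(\lambda)e^{-q(\lambda)}=0.$$
A formal power series with zero derivative is a constant: if $g=\sum b_i\lambda^i$, then $g'=0$ means $ib_i=0$ for all $i\ge1$, so $b_i=0$ for $i\ge 1$ since we are in characteristic zero. Hence $g(\lambda)=c\in\C$ and $h(\lambda)=c\,e^{q(\lambda)}$.

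If $c=0$, then $h=0$. If $c\neq0$, write $c=e^{\gamma}$ for some $\gamma\in\C$, which is possible because $\exp:\C\to\C^\times$ is surjective. Then $h(\lambda)=e^{q(\lambda)+\gamma}$, and $\tilde q=q+\gamma\in\C[\lambda]$ still satisfies $\tilde q'=p$. This is exactly the claimed form.

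Nothing here is a real obstacle. The one point that needs care is the justification of the formal chain rule for $e^{q(\lambda)}$, and that reduces to the finiteness observation above. The reason for normalising $q(0)=0$ first and only then absorbing the nonzero constant $c$ into $q$ is that $e^{q(\lambda)}$ is defined unambiguously in $\C[[\lambda]]$ when $q$ has no constant term. For a general polynomial $q$ one reads $e^{q(\lambda)}$ as $e^{q(0)}e^{q(\lambda)-q(0)}$, with $e^{q(0)}$ the complex exponential.
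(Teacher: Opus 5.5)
Your proof is correct, but it takes a different route from the paper's. The paper argues by uniqueness rather than with an integrating factor. Writing $h(\lambda)=\sum a_i\lambda^i$, comparing coefficients in $h'=hp$ expresses $(i+1)a_{i+1}$ in terms of $a_0,\dots,a_i$, so $a_0=0$ forces $h=0$. If $a_0\neq 0$, the paper chooses $q$ with $q'=p$ and $e^{q(0)}=a_0$, observes that $h-e^{q}$ solves the same linear equation and vanishes at $0$, and concludes $h=e^{q}$ from the first case.

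Your argument instead uses two facts: $e^{q}$ is a unit in $\C[[\lambda]]$, and a series with zero formal derivative is constant. This gives the whole solution set $\C\,e^{q}$ in one step, with no case split on $h(0)$. The cost is that you need the inverse $e^{-q}$ and the Leibniz rule, whereas the paper needs only the coefficient recursion and the fact that $e^{q}$ itself is a solution.

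Both routes rely on the formal chain rule $(e^{q})'=q'e^{q}$, which the paper treats as evident and you justify explicitly. Your normalisation $q(0)=0$, followed by absorbing $c=e^{\gamma}$ into $q$, is equivalent to the paper's direct choice $e^{q(0)}=a_0$.
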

\begin{proof}
Write $h(\lambda)=\sum_{i=0}^{\infty}a_i \lambda^i$, where $a_i\in \C$.
If $h(0)=a_0=0$, then by comparing the constant terms on both sides of Eq.~(\ref{eqq-m2}), we have $a_1=0$. Continuing this procedure, we deduce $h(\lambda)=0$.

If $a_0\neq 0$, take $q(\lambda)\in \C[\lambda]$ such that $q'(\lambda)=p(\lambda)$ and $e^{q(0)}=a_0$.
Clearly, $h_1(\lambda)=h(\lambda)-e^{q(\lambda)}$ also satisfies Eq.~ (\ref{eqq-m2}) and $h_1(0)=0$. Then $h_1(\lambda)=0$. Therefore, $h(\lambda)=e^{q(\lambda)}$.
\end{proof}

\begin{lem}\label{lem-em2}
Any nonzero solution of Eq.~(\ref{main2}) is  of the form $f(\partial, \lambda)=e^{q(\partial+\lambda)-q(\partial)}$ for some $q(\lambda)\in \C[\lambda]$.
\end{lem}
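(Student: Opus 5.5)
Set $f(\partial,\lambda)=\sum_{i\ge0} f_i(\partial)\lambda^i$ with $f_i(\partial)\in\C[\partial]$. The plan is to turn Eq.~(\ref{main2}) into a differential equation in $\lambda$ whose coefficient is a polynomial, and then apply Lemma \ref{lem-em1}.

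First I normalize $f$ at $\lambda=0$. Putting $\lambda=0$ in Eq.~(\ref{main2}) gives
$$f(\partial,\mu)=f(\partial,0)f(\partial,\mu)=f_0(\partial)f(\partial,\mu).$$
Since $\C[\partial][[\mu]]$ is an integral domain and $f\neq0$, we get $f_0(\partial)=1$.

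Next I differentiate Eq.~(\ref{main2}) with respect to $\mu$ and set $\mu=0$. This gives
$$\partial_\lambda f(\partial,\lambda)=f(\partial,\lambda)\,f_1(\partial+\lambda),$$
where $f_1(\partial+\lambda)$ is a polynomial in $\lambda$ with coefficients in $\C[\partial]$. Choose $q\in\C[\lambda]$ with $q'=f_1$. The series $g(\partial,\lambda):=e^{q(\partial+\lambda)-q(\partial)}$ lies in $\C[\partial][[\lambda]]$ and also satisfies
$$\partial_\lambda g=g\,f_1(\partial+\lambda),\qquad g(\partial,0)=1.$$

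Now compare $f$ and $g$. Lemma \ref{lem-em1} is stated over $\C$, but its proof (coefficient comparison in $\lambda$) works verbatim over the integral domain $\C[\partial]$, where $p(\lambda)=f_1(\partial+\lambda)$. Alternatively, since $g$ is invertible, the quotient $h=f g^{-1}\in\C[\partial][[\lambda]]$ satisfies $\partial_\lambda h=0$, so $h=h(\partial,0)=1$. Either way $f=g=e^{q(\partial+\lambda)-q(\partial)}$.

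The only step needing care is the passage from a first-order equation in $\lambda$ to uniqueness over the coefficient ring $\C[\partial]$ rather than $\C$. The quotient argument sidesteps it, and it is routine because formal $\lambda$-derivatives in $\C[\partial][[\lambda]]$ behave as over any ring containing $\Q$. Finally, one checks directly that $e^{q(\partial+\lambda)-q(\partial)}$ does satisfy Eq.~(\ref{main2}), via the telescoping of exponents already used in the proof of Proposition \ref{Pro-tw1}. Only the stated direction is required, though.
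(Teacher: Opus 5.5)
Your proof is correct, but it takes a different route from the paper's.

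\textbf{The paper's route.} It specializes $\partial=0$, obtaining the one-variable identity $f(0,\lambda+\mu)=h(\lambda)f(\lambda,\mu)$ with $h(\lambda)=f(0,\lambda)$. It shows $h(0)\neq0$ and expands $f(\lambda,\mu)=h(\lambda)^{-1}h(\lambda+\mu)=\sum_n h(\lambda)^{-1}h^{(n)}(\lambda)\mu^{[n]}$ by formal Taylor expansion. The hypothesis $f(\lambda,\mu)\in\C[\lambda][[\mu]]$ then forces $h^{-1}h'\in\C[\lambda]$, so Lemma \ref{lem-em1} applies over $\C$.

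\textbf{Your route.} You use the infinitesimal form of the functional equation instead. Specializing $\lambda=0$ gives $f(\partial,0)=1$, and applying $\tfrac{d}{d\mu}$ at $\mu=0$ gives the linear equation $\tfrac{d}{d\lambda}f=f\,f_1(\partial+\lambda)$ over the coefficient ring $\C[\partial]$. Its coefficient is automatically polynomial because $f_1\in\C[\partial]$; this is where the hypothesis $f\in\C[\partial][[\lambda]]$ enters. Uniqueness then comes from $\tfrac{d}{d\lambda}(fg^{-1})=0$.

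\textbf{What each buys.} Your argument works directly over $\C[\partial]$ without reducing to $\C$, and it identifies $q$ explicitly as an antiderivative of $f_1$, the $\lambda$-linear coefficient of $f$. The paper's argument yields the extra structural fact that $f$ is determined by its restriction $f(0,\lambda)$.

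Your normalization step also fills in something the paper's argument leaves implicit. The paper proves $a_0\neq0$ via a ``minimal index $m$ with $a_m\neq0$'', which presupposes $f(0,\lambda)\neq0$. That fact follows immediately from your observation that $f(\partial,0)=1$.
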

\begin{proof}
Assume that $f(\partial,\lambda)$ is a nonzero solution of Eq.~(\ref{main2}).
Setting $\partial=0$ in Eq.~(\ref{main2}) gives
\begin{equation}\label{main3}
f(0, \lambda+\mu)=f(0, \lambda)f(\lambda,\mu).
\end{equation}
Set $h(\lambda)=f(0, \lambda)$.  Denote $h(\lambda)=\sum_{n=0}^{\infty}a_i \lambda^i$, where $a_i\in \C$ for each $i\in \Z_+$.

First we claim $a_0\neq 0$.
Let $m={\rm min}\{i\,|\,a_i\neq 0\}$. If $m>0$, comparing coefficients of $\lambda^0\mu^m$ on both sides of $f(0, \lambda+\mu)=f(0, \lambda)f(\lambda,\mu)$ gives $a_m=0$, which contradicts with $a_m\neq 0$. Therefore, $a_0\neq 0$. Then the claim follows.

Since $a_0\neq 0$, $h(\lambda)$ is invertible in $\C[[\lambda]]$.
Therefore, by Lemma \ref{lem-FT}, we get
\[
f(\lambda,\mu)=h(\lambda)^{-1}h(\lambda+\mu)=\sum_{n=0}^{\infty} h(\lambda)^{-1}h^{(n)}(\lambda)\mu^{[n]}.
\]
Since
$f(\lambda,\mu)\in\C[\lambda][[\mu]]$,
we deduce $h(\lambda)^{-1}h^{(n)}(\lambda)\in \C[\lambda]$ for any $n\geq 0$.
But this condition is equivalent to
$h(\lambda)^{-1}h'(\lambda)\in \C[\lambda]$.
Set $h(\lambda)^{-1}h'(\lambda)=p(\lambda)\in \C[\lambda]$.
By Lemma \ref{lem-em1}, we deduce that $h(\lambda)=e^{q(\lambda)}$ with some $q(\lambda)\in \C[\lambda]$ and $q'(\lambda)=p(\lambda)$. Hence we have $f(\lambda, \mu)=h(\lambda)^{-1}h(\lambda+\mu)=e^{q(\lambda+\mu)-q(\mu)}$.
\end{proof}
\begin{pro}\label{prop1}If $f(\partial, \lambda)$ is a solution of Eq.~(\ref{main}) and $q(\partial)$ is an arbitrary  polynomial in $\C[\partial]$, then $f(\partial, \lambda)e^{q(\partial+\lambda)-q(\partial)}$ is also a solution of Eq.~(\ref{main}).
\end{pro}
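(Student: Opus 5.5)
The plan is to reduce Proposition \ref{prop1} to Lemma \ref{lem-em3}: it suffices to show that $g(\partial,\lambda):=e^{q(\partial+\lambda)-q(\partial)}$ is a solution of Eq.~(\ref{main2}). Lemma \ref{lem-em3} then says that the product $f(\partial,\lambda)g(\partial,\lambda)$ solves Eq.~(\ref{main}).

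To verify Eq.~(\ref{main2}) for $g$, note that all three series lie in $\C[\partial][[\lambda,\mu]]$, and that the exponents satisfy
\[
\bigl(q(\partial+\lambda)-q(\partial)\bigr)+\bigl(q(\partial+\lambda+\mu)-q(\partial+\lambda)\bigr)=q(\partial+\lambda+\mu)-q(\partial).
\]
Each exponent has zero constant term in the formal variables $\lambda,\mu$, since it vanishes at $\lambda=0$ (respectively $\mu=0$, respectively $\lambda=\mu=0$). Hence the exponential series converge formally in $\C[\partial][[\lambda,\mu]]$, and the identity $e^{A}e^{B}=e^{A+B}$ holds for commuting such elements. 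This gives $g(\partial,\lambda)g(\partial+\lambda,\mu)=g(\partial,\lambda+\mu)$. The same identity was already used in the proof of Proposition \ref{Pro-tw1}.

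For completeness, I would also record the direct check behind Lemma \ref{lem-em3}. Substituting $F=fg$ into the right-hand side of Eq.~(\ref{main}) gives
\[
f(\partial,\lambda)f(\partial+\lambda,\mu)\,g(\partial,\lambda)g(\partial+\lambda,\mu)-f(\partial,\mu)f(\partial+\mu,\lambda)\,g(\partial,\mu)g(\partial+\mu,\lambda).
\]
By Eq.~(\ref{main2}) and the symmetry of $\lambda+\mu$, both products of $g$'s equal $g(\partial,\lambda+\mu)$. Factoring it out and applying Eq.~(\ref{main}) for $f$ yields $(\lambda-\mu)f(\partial,\lambda+\mu)g(\partial,\lambda+\mu)$, as required.

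There is no real obstacle here. The only point needing care is the justification of formal exponentials in two variables, namely that the substitution $\partial\mapsto\partial+\lambda$ commutes with forming $e^{(\cdot)}$. This holds because the substitution is a continuous ring homomorphism of $\C[\partial][[\lambda,\mu]]$.

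Alternatively, one can argue via Proposition \ref{Pro-tw1}. A solution $f$ of Eq.~(\ref{main}) defines a generalized conformal module $\C[\partial]v$ with $L_\lambda v=f(\partial,\lambda)v$. Twisting this module by $q(\partial)$ gives $L_\lambda v=e^{q(\partial+\lambda)-q(\partial)}f(\partial,\lambda)v$, so this product again solves Eq.~(\ref{main}).
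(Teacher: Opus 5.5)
Your argument is correct and is essentially the paper's proof, which also deduces the proposition from Lemma~\ref{lem-em3} together with the fact that $e^{q(\partial+\lambda)-q(\partial)}$ solves Eq.~(\ref{main2}). The paper cites Lemma~\ref{lem-em2} for that fact, but that lemma is stated in the converse direction. Your direct check, via $\bigl(q(\partial+\lambda)-q(\partial)\bigr)+\bigl(q(\partial+\lambda+\mu)-q(\partial+\lambda)\bigr)=q(\partial+\lambda+\mu)-q(\partial)$, supplies exactly the direction that is needed.
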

\begin{proof}
It follows directly from Lemmas \ref{lem-em3} and \ref{lem-em2}.
\end{proof}
Suppose that $f(\partial, \lambda)\in \C[\partial][[\lambda]]$ is a nonzero solution
of Eq.~(\ref{main}).
Setting $\mu=0$ in Eq.~(\ref{main}) gives
$$\lambda f(\partial, \lambda)=f(\partial, \lambda)f(\partial+\lambda,0)
-f(\partial, 0)f(\partial, \lambda),$$
which implies $f(\partial, 0)=\partial+b$ for some $b\in \C$. Let $\widetilde{\partial}=\partial+b$. Then we set
\begin{eqnarray}
f(\partial, \lambda)=f(\widetilde{\partial}-b, \lambda)=g(\widetilde{\partial}, \lambda).
 \end{eqnarray}
 Obviously, $g(\widetilde{\partial}, 0)=\widetilde{\partial}$. Then Eq.~(\ref{main}) becomes
\begin{eqnarray}\label{main-1}
(\lambda-\mu)g(\widetilde{\partial}, \lambda+\mu)=g(\widetilde{\partial},\lambda)g(\widetilde{\partial}+\lambda, \mu)-g(\widetilde{\partial},\mu)g(\widetilde{\partial}+\mu, \lambda).
\end{eqnarray}
Set $g(\widetilde{\partial}, \lambda)=\sum_{i} \sum_ja_{i,j}\widetilde{\partial}^i \lambda^j$, where $a_{i,j}\in \C$.
The condition $g(\widetilde{\partial}, \lambda)\in \C[\widetilde{\partial}][[\lambda]]$ is equivalent to, for each $p$,
$a_{i,p}=0$ when $i>>0$.
Moreover, $g(\widetilde{\partial}, 0)=\widetilde{\partial}$ means $a_{i,0}=\delta_{i,1}, i\in \Z_+$.

\begin{lem}
Plugging $g(\widetilde{\partial}, \lambda)=\sum_i\sum_j a_{i,j}\widetilde{\partial}^i \lambda^j$ into Eq.~(\ref{main-1}), we have
\begin{equation}\label{e-1}
\aligned
&(\binom{p+r-1}{r}-\binom{p+r-1}{r-1})a_{i,p+r-1}\\
=&\sum_{k=0}^i\sum_{s=0}^p\binom{k+s}{k}a_{i-k,p-s}a_{k+s,r}-\sum_{k=0}^i\sum_{s=0}^r\binom{k+s}{k}a_{i-k, r-s}a_{k+s,p}.
\endaligned
\end{equation}
\end{lem}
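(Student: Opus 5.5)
The plan is a direct coefficient comparison in Eq.~(\ref{main-1}). I will expand both sides as formal series in $\C[\widetilde{\partial}][[\lambda,\mu]]$ and read off the coefficient of $\widetilde{\partial}^i\lambda^p\mu^r$.

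First I check that every term is well defined. Write $g(\widetilde{\partial},\lambda)=\sum_j g_j(\widetilde{\partial})\lambda^j$ with $g_j(\widetilde{\partial})=\sum_i a_{i,j}\widetilde{\partial}^i$. Each $g_j$ is a polynomial, because for fixed $j$ we have $a_{i,j}=0$ for $i\gg 0$. Hence
\[
g(\widetilde{\partial}+\lambda,\mu)=\sum_r g_r(\widetilde{\partial}+\lambda)\mu^r
\]
lies in $\C[\widetilde{\partial},\lambda][[\mu]]$. Similarly $g(\widetilde{\partial},\lambda+\mu)=\sum_j g_j(\widetilde{\partial})(\lambda+\mu)^j$ is a well-defined element of $\C[\widetilde{\partial}][[\lambda,\mu]]$, since each total degree in $(\lambda,\mu)$ receives only one term. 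So every product in Eq.~(\ref{main-1}) makes sense, and every coefficient will be a finite sum.

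Next I compute the left-hand side. The coefficient of $\widetilde{\partial}^i\lambda^p\mu^r$ in $(\lambda-\mu)\sum_{j}a_{i,j}\widetilde{\partial}^i(\lambda+\mu)^j$ comes only from $j=p+r-1$. Multiplying by $\lambda$ contributes $\binom{p+r-1}{p-1}=\binom{p+r-1}{r}$. Multiplying by $-\mu$ contributes $-\binom{p+r-1}{p}=-\binom{p+r-1}{r-1}$. Together these give the left side of Eq.~(\ref{e-1}), with the usual convention that binomial coefficients with a negative lower index vanish; this covers $p=0$ or $r=0$.

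For the first product on the right, I expand
\[
g(\widetilde{\partial}+\lambda,\mu)=\sum_{m,r}a_{m,r}(\widetilde{\partial}+\lambda)^m\mu^r
\]
by the binomial theorem. Writing $m=k+s$, the coefficient of $\widetilde{\partial}^k\lambda^s\mu^r$ is $\binom{k+s}{k}a_{k+s,r}$. Multiplying by $g(\widetilde{\partial},\lambda)$, whose coefficient of $\widetilde{\partial}^{i-k}\lambda^{p-s}$ is $a_{i-k,p-s}$, and summing over $0\le k\le i$, $0\le s\le p$ gives the first double sum in Eq.~(\ref{e-1}). The second product $g(\widetilde{\partial},\mu)g(\widetilde{\partial}+\mu,\lambda)$ is the same computation with the roles of $\lambda$ and $\mu$ swapped. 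This produces $\sum_{k=0}^i\sum_{s=0}^r\binom{k+s}{k}a_{i-k,r-s}a_{k+s,p}$. Equating coefficients then yields Eq.~(\ref{e-1}).

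I do not expect a real obstacle here. The only points needing care are the index bookkeeping when shifting $\widetilde{\partial}\mapsto\widetilde{\partial}+\lambda$ (the substitution $m=k+s$) and the finiteness check done at the start.
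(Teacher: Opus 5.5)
Your proposal is correct and matches the paper's proof: you read off the coefficient of $\widetilde{\partial}^i\lambda^p\mu^r$ on the left from the single term $j=p+r-1$, expand $g(\widetilde{\partial}+\lambda,\mu)$ binomially with the reindexing $m=k+s$ (the paper's $s=i_2-k$), and get the second product by swapping $\lambda$ and $\mu$. Your preliminary check that all products lie in $\C[\widetilde{\partial}][[\lambda,\mu]]$ with finite coefficient sums is a small addition the paper leaves implicit.
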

\begin{proof}
Since $g(\widetilde{\partial}, \lambda)=\sum_i\sum_j a_{i,j}\widetilde{\partial}^i \lambda^j$, we have
\begin{equation*}
g(\widetilde{\partial}, \lambda+\mu)=\sum_j\sum_i a_{i,j}\widetilde{\partial}^i (\lambda+\mu)^j
=\sum_j\sum_i\sum_{k=0}^j\binom{j}{k} a_{i,j}\widetilde{\partial}^i\lambda^k\mu^{j-k}.
\end{equation*}
Consequently, we get
\begin{equation*}
(\lambda-\mu)g(\widetilde{\partial}, \lambda+\mu)=\sum_j\sum_i\sum_{k=0}^j \binom{j}{k} a_{i,j}\widetilde{\partial}^i(\lambda^{k+1}\mu^{j-k}-\lambda^k\mu^{j-k+1}).
\end{equation*}
The coefficient of $\widetilde{\partial}^i\lambda^p\mu^r$ in $(\lambda-\mu)g(\widetilde{\partial}, \lambda+\mu)$
is $$(\binom{p+r-1}{r}-\binom{p+r-1}{r-1})a_{i,p+r-1}.$$
In addition, we obtain
\begin{equation*}
g(\widetilde{\partial}, \lambda)g(\widetilde{\partial}+\lambda, \mu)
=\sum_{i_1,j_1,i_2,j_2, k} \binom{i_2}{k}a_{i_1,j_1}a_{i_2,j_2}\widetilde{\partial}^{i_1+k}\lambda^{j_1+i_2-k}\mu^{j_2}.
\end{equation*}
Setting $s=i_2-k$ and replacing the indices $i_1,i_2,j_1,j_2,k$ by $i_1,s,j_1,j_2,k$, we have
\begin{equation*}
g(\widetilde{\partial}, \lambda)g(\widetilde{\partial}+\lambda, \mu)
=\sum_{i_1,j_1,s,j_2, k} \binom{k+s}{k}a_{i_1,j_1}a_{k+s,j_2}\widetilde{\partial}^{i_1+k}\lambda^{j_1+s}\mu^{j_2}.
\end{equation*}
Then the coefficient of $\widetilde{\partial}^i\lambda^p\mu^r$ in $g(\widetilde{\partial}, \lambda)g(\widetilde{\partial}+\lambda, \mu)$
is $$\sum_{k=0}^i\sum_{s=0}^p\binom{k+s}{k}a_{i-k,p-s}a_{k+s,r}.$$
Similarly, the coefficient of $\widetilde{\partial}^i\lambda^p\mu^r$ in $g(\widetilde{\partial}, \mu)g(\widetilde{\partial}+\mu, \lambda)$
is $$\sum_{k=0}^i\sum_{s=0}^r\binom{k+s}{k}a_{i-k, r-s}a_{k+s,p}.$$
Then the desired assertion follows from Eq.~(\ref{main-1}) directly.\end{proof}

Next, we have the following recursive formula.
\begin{cor}\label{coro1}
If $g(\widetilde{\partial}, \lambda)=\sum_i\sum_j a_{i,j}\widetilde{\partial}^i \lambda^j$ is a nonzero solution of Eq.~(\ref{main-1}), then we have
\begin{equation}
(p+i-1)a_{i,p}
=\sum_{k=0}^i\sum_{s=1}^p\binom{k+s}{k}a_{i-k,p-s}a_{k+s,1}\;\;\text{for any $i$ and $p\in \Z_+$.}
\end{equation}
\end{cor}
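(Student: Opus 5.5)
Corollary \ref{coro1} should follow by specializing Eq.~(\ref{e-1}) to $r=1$ and then simplifying with the normalization $a_{k,0}=\delta_{k,1}$, which comes from $g(\widetilde{\partial},0)=\widetilde{\partial}$.

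\emph{Left-hand side.} With $r=1$ the binomial coefficients are $\binom{p}{1}=p$ and $\binom{p}{0}=1$. So the left side of Eq.~(\ref{e-1}) becomes $(p-1)a_{i,p}$.

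\emph{First sum on the right.} With $r=1$ it reads
\[
\sum_{k=0}^i\sum_{s=0}^p\binom{k+s}{k}a_{i-k,p-s}a_{k+s,1}.
\]
I will split off the $s=0$ part,
\[
\sum_{k=0}^i a_{i-k,p}\,a_{k,1}.
\]
The remaining terms with $s\ge 1$ are exactly the double sum in the statement of the corollary.

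\emph{Second sum on the right.} With $r=1$ the index $s$ takes only the values $0$ and $1$:
\begin{itemize}
\item $s=0$: this gives $\sum_{k=0}^i a_{i-k,1}a_{k,p}$.
\item $s=1$: this gives $\sum_{k=0}^i (k+1)a_{i-k,0}a_{k+1,p}$. Since $a_{i-k,0}=\delta_{i-k,1}$, only $k=i-1$ survives (assuming $i\ge1$), leaving $i\,a_{i,p}$.
\end{itemize}
The $s=0$ term of the second sum and the split-off $s=0$ term of the first sum are the same convolution after reindexing $k\mapsto i-k$, so they cancel.

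\emph{Conclusion.} After the cancellation, Eq.~(\ref{e-1}) reads $(p-1)a_{i,p}=\sum_{s\ge1}(\cdots)-i\,a_{i,p}$. Moving $i\,a_{i,p}$ to the left gives $(p+i-1)a_{i,p}$ equal to the claimed double sum.

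\emph{Edge cases.}
\begin{itemize}
\item $i=0$: the $s=1$ term of the second sum is $a_{0,0}a_{1,p}=0$, consistent with the factor $i=0$, so the formula still holds.
\item $p=0$: the left side of the corollary is $(i-1)a_{i,0}=(i-1)\delta_{i,1}=0$ and the right side is an empty sum. So the identity holds trivially, although Eq.~(\ref{e-1}) with $p=0$, $r=1$ involves $a_{i,0}$ and needs a quick separate check.
\end{itemize}

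The only real care needed is the bookkeeping in the $s=0$ cancellation and the use of $a_{k,0}=\delta_{k,1}$; there is no substantive obstacle.
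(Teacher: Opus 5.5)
Your proposal is correct and matches the paper's proof: set $r=1$ in Eq.~(\ref{e-1}), use $a_{k,0}=\delta_{k,1}$ to reduce the $s=1$ part of the second sum to $i\,a_{i,p}$, and cancel the two $s=0$ convolutions against each other. The separate check you flag for $p=0$ is not needed, since your general computation already covers that case: the $s\ge 1$ sum is empty and the identity reduces to $(i-1)a_{i,0}=0$.
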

\begin{proof}
Setting $r=1$ in Eq.~(\ref{e-1}) and noting $a_{i,0}=\delta_{i,1}$, we have
\begin{equation*}
\aligned
&(p-1)a_{i,p}\\
=&\sum_{k=0}^i\sum_{s=0}^p\binom{k+s}{k}a_{i-k,p-s}a_{k+s,1}-
\sum_{k=0}^i\binom{k+1}{k}a_{i-k, 0}a_{k+1,p}-
\sum_{k=0}^i\binom{k}{k}a_{i-k,1}a_{k,p}\\
=&\sum_{k=0}^i\sum_{s=1}^p\binom{k+s}{k}a_{i-k,p-s}a_{k+s, 1}-
ia_{i,p}.
\endaligned
\end{equation*}
Then the desired assertion follows.\end{proof}

\begin{rmk}
By Corollary \ref{coro1} and using induction on $p$, we can deduce
that each coefficient
$a_{i,p}$ is determined by $a_{j,0}, a_{k,1}, j,k\in\Z_+$.
\end{rmk}

\begin{cor}\label{coro2}
If $g(\widetilde{\partial}, \lambda)=\sum_i\sum_j a_{i,j}\widetilde{\partial}^i \lambda^j$ is a nonzero solution of Eq.~(\ref{main-1}) and $a_{k,1}=0$ for any $k\geq 1$, then we have
$a_{i,p}=0$ for any $p\geq 2$ and $g(\widetilde{\partial}, \lambda)=\widetilde{\partial}+a\lambda$, where $a=a_{0,1}$.
\end{cor}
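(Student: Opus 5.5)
Under the hypothesis $a_{k,1}=0$ for all $k\geq 1$, the recursion of Corollary \ref{coro1} collapses, since only the term $k+s=1$ survives. In the inner sum $s\geq 1$, so $k+s=1$ forces $k=0$ and $s=1$. For every $i,p\in\Z_+$ with $p\geq 1$ the recursion therefore reduces to
\begin{equation*}
(p+i-1)a_{i,p}=\binom{1}{0}a_{i,p-1}a_{1,1}.
\end{equation*}
The hypothesis (with $k=1$) gives $a_{1,1}=0$, so the right-hand side vanishes identically. Hence $(p+i-1)a_{i,p}=0$ for all $i$ and all $p\geq 1$.

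For $p\geq 2$ we have $p+i-1\geq 1>0$ for every $i\in\Z_+$, so $a_{i,p}=0$ for all $i$ and all $p\geq 2$. This is the first claim, and no induction on $p$ is needed.

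It remains to identify the surviving coefficients. The normalization $g(\widetilde{\partial},0)=\widetilde{\partial}$ gives $a_{i,0}=\delta_{i,1}$. In degree one, $a_{k,1}=0$ for $k\geq 1$ by hypothesis. For $p=1$, $i=0$ the reduced relation reads $0\cdot a_{0,1}=0$, so $a_{0,1}$ is unconstrained; set $a=a_{0,1}$. Assembling these coefficients yields $g(\widetilde{\partial},\lambda)=\widetilde{\partial}+a\lambda$.

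The only point that needs care is verifying that the sum in Corollary \ref{coro1} really reduces to the single term with $k=0$, $s=1$, including the binomial factor $\binom{1}{0}=1$. As a consistency check, $\widetilde{\partial}+a\lambda$ does solve Eq.~(\ref{main-1}), since it is the rank-one conformal action of Proposition \ref{pro-2}. This is not logically needed here because the solution is assumed from the start.
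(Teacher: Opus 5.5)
Your proof is correct and follows the paper's argument: both apply Corollary~\ref{coro1}, note that every summand contains a factor $a_{k+s,1}$ with $k+s\geq 1$ (so it vanishes by hypothesis), and then use $p+i-1\neq 0$ for $p\geq 2$. Your intermediate step singling out the $k=0$, $s=1$ term is unnecessary, because that term is killed by the same hypothesis ($a_{1,1}=0$) as all the others.
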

\begin{proof}
If $p\geq 2$, then we have $p+i-1\neq 0$ for any $i\in \Z_+$.
Then by Corollary \ref{coro1}, we obtain
\begin{equation}
a_{i,p}
=\frac{1}{p+i-1}\sum_{k=0}^i\sum_{s=1}^p\binom{k+s}{k}a_{i-k,p-s}a_{k+s,1},
\end{equation}
where $k+s\geq 1$ on the right hand side.
Noting $a_{k,1}=0$ for any $k\geq 1$, we deduce $a_{i,p}=0$ for any $p\geq 2$.
Then we have $g(\widetilde{\partial}, \lambda)=\widetilde{\partial}+a_{0,1}\lambda$.
\end{proof}

\begin{lem}\label{thm-1}
Any nonzero solution of Eq.~(\ref{main}) is of the form
$$f(\partial, \lambda)=(\partial+a \lambda+b)e^{q(\partial+\lambda)-q(\partial)}$$ for some $a,b\in\C$ and some $q(\partial)\in\C[\partial]$.
\end{lem}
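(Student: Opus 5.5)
We reduce to the normalized equation (\ref{main-1}) and strip off an exponential factor until Corollary~\ref{coro2} applies. Let $f$ be a nonzero solution of Eq.~(\ref{main}). As computed before Eq.~(\ref{main-1}), $f(\partial,0)=\partial+b$, and $g(\widetilde{\partial},\lambda)=f(\widetilde{\partial}-b,\lambda)$ solves Eq.~(\ref{main-1}) with $a_{i,0}=\delta_{i,1}$. The coefficient $\frac{\partial}{\partial\lambda}g(\widetilde\partial,\lambda)|_{\lambda=0}=\sum_k a_{k,1}\widetilde\partial^k=:P(\widetilde\partial)$ is a polynomial, because $g\in\C[\widetilde\partial][[\lambda]]$. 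By the Remark after Corollary~\ref{coro1}, $g$ is completely determined by the data $a_{j,0}=\delta_{j,1}$ and the polynomial $P$. The plan is therefore to produce, for every polynomial $P$, an explicit solution of the stated form whose $\lambda$-linear coefficient equals $P$. Uniqueness then forces $g$ to be that solution.

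Concretely, choose $q\in\C[\widetilde\partial]$ with $q(0)=0$ such that $\widetilde\partial\, q'(\widetilde\partial)=P(\widetilde\partial)-P(0)$. This is possible since $P(\widetilde\partial)-P(0)$ has zero constant term, so $q'=(P-P(0))/\widetilde\partial$ is a polynomial. Put $a=P(0)=a_{0,1}$ and
\[
h(\widetilde\partial,\lambda)=(\widetilde\partial+a\lambda)e^{q(\widetilde\partial+\lambda)-q(\widetilde\partial)}.
\]
By Proposition~\ref{pro-2}, $\widetilde\partial+a\lambda$ solves Eq.~(\ref{main-1}); this is Eq.~(\ref{main}) with $b=0$, and Eq.~(\ref{main-1}) has the same form. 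Proposition~\ref{prop1} then shows that $h$ is a solution. Moreover $h(\widetilde\partial,0)=\widetilde\partial$, and the $\lambda$-coefficient of $h$ is
\[
a+\widetilde\partial q'(\widetilde\partial)=P(\widetilde\partial).
\]
So $g$ and $h$ have identical $a_{j,0}$ and $a_{k,1}$.

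The step that needs care is the uniqueness claim. I will prove it by induction on $p\ge 2$ using Corollary~\ref{coro1}. For $p\ge2$ and every $i\in\Z_+$, the factor $p+i-1$ is nonzero. The right-hand side of the recursion involves only $a_{\cdot,p-s}$ with $s\ge1$ and the $a_{\cdot,1}$. Hence $a_{i,p}$ is uniquely determined by lower $p$-levels, and any two solutions agreeing at levels $0$ and $1$ coincide. This is exactly the mechanism used in Corollary~\ref{coro2}.

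Thus $g=h$, and substituting $\widetilde\partial=\partial+b$ gives
\[
f(\partial,\lambda)=(\partial+a\lambda+b)e^{\tilde q(\partial+\lambda)-\tilde q(\partial)},\qquad \tilde q(\partial)=q(\partial+b).
\]
This is the asserted form.

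An alternative route is to divide $g$ by $e^{q(\widetilde\partial+\lambda)-q(\widetilde\partial)}$, which is invertible, apply Proposition~\ref{prop1} with $-q$ to get a solution whose $a_{k,1}=0$ for $k\ge1$, and then invoke Corollary~\ref{coro2} directly. Either route works. The only real check is the computation that multiplication by the exponential shifts the $\lambda$-linear coefficient by exactly $\widetilde\partial q'(\widetilde\partial)$, which kills all $a_{k,1}$ with $k\ge1$.
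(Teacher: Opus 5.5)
Your proof is correct and is essentially the paper's argument. The paper chooses the same $q$ (with $q(0)=0$ and $\widetilde\partial\, q'(\widetilde\partial)=\sum_{k\ge1}a_{k,1}\widetilde\partial^k$), multiplies $g$ by $e^{-(q(\widetilde\partial+\lambda)-q(\widetilde\partial))}$ using Proposition~\ref{prop1}, and concludes with Corollary~\ref{coro2}, which is exactly your ``alternative route''. Your main route of building $h$ and invoking uniqueness from the recursion in Corollary~\ref{coro1} only repackages the mechanism that proves Corollary~\ref{coro2}.
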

\begin{proof}
Suppose that $f(\partial, \lambda)\in \C[\partial][[\lambda]]$ is a nonzero solution
of Eq.~(\ref{main}). Note that $f(\partial, 0)=\partial+b$ for some $b\in \C$. Set $g(\widetilde{\partial}, \lambda)=f(\widetilde{\partial}-b, \lambda)=f(\partial, \lambda)$ with $\widetilde{\partial}=\partial+b$.
Note that $g(\widetilde{\partial},0)=\widetilde{\partial}$. Assume that $g(\widetilde{\partial}, \lambda)=\sum_i\sum_j a_{i,j}\widetilde{\partial}^i \lambda^j$ is a nonzero solution of Eq.~(\ref{main-1}).

By Corollary \ref{coro2}, if $a_{k,1}=0$ for any $k\geq 1$, then $g(\widetilde{\partial}, \lambda)=\widetilde{\partial}+a\lambda$ for some $a\in \C$. Then in this case, we obtain
\begin{eqnarray*}
f(\partial, \lambda)=g(\widetilde{\partial}, \lambda)=\partial+a\lambda+b.
\end{eqnarray*}
Take $q(\partial)=1\in \C[\partial]$. Then $f(\partial, \lambda)$ is of the desired form.

Next, we assume $a_{k,1}\neq 0$ with some $k\geq 1$.

Since $a_{k,1}=0$ when $k>>0$, there exists some $n\geq 1$ such that $a_{n,1}\neq 0$ and $a_{m,1}=0$ when $m>n$.
Then $g(\widetilde{\partial},\lambda)$ has the form
$$\widetilde{\partial}+(a_{0,1}+a_{1,1}\widetilde{\partial}+\cdots+a_{n,1}\widetilde{\partial}^n)\lambda+(\text{terms}\ \text{with}\ \text{deg}_\lambda\geq2).$$

Take $d_k=\frac{a_{k,1}}{k}$ for $k\geq 1$, and let $q(\widetilde{\partial})=d_n\widetilde{\partial}^n+\cdots+d_1\widetilde{\partial}\in\C[\widetilde{\partial}]$.
By direct computation, we have
$$q(\widetilde{\partial}+\lambda)-q(\widetilde{\partial})=(a_{n,1}\widetilde{\partial}^{n-1}+\cdots+a_{2,1}\widetilde{\partial}+a_{1,1})\lambda+\lambda^2\phi(\widetilde{\partial}, \lambda)$$
with $\phi(\widetilde{\partial}, \lambda)\in \C[\widetilde{\partial},\lambda]$.
Consequently, we obtain $$e^{-q(\widetilde{\partial}+\lambda)+q(\widetilde{\partial})}=1-(a_{n,1}\widetilde{\partial}^{n-1}+\cdots+a_{2,1}\widetilde{\partial}+a_{1,1})\lambda+\lambda^2h(\widetilde{\partial}, \lambda)$$
with $h(\widetilde{\partial}, \lambda)\in \C[\widetilde{\partial}][[\lambda]]\subset \C[[\widetilde{\partial},\lambda]]$.

By Proposition \ref{prop1}, $g(\widetilde{\partial}, \lambda)e^{-q(\widetilde{\partial}+\lambda)+q(\widetilde{\partial})}$ is also a nonzero solution of Eq.~(\ref{main-1}).
By direct computation, we get
$$g(\widetilde{\partial}, \lambda)e^{-q(\widetilde{\partial}+\lambda)+q(\widetilde{\partial})}=\widetilde{\partial}+a_{0,1}\lambda+(\text{terms}\ \text{with}\ \text{deg}_\lambda\geq2).$$
Then by Corollary \ref{coro2}, we have $$g(\widetilde{\partial}, \lambda)e^{-q(\widetilde{\partial}+\lambda)+q(\widetilde{\partial})}=\widetilde{\partial}+a_{0,1}\lambda.$$
Therefore, we obtain
\begin{eqnarray*}
g(\widetilde{\partial}, \lambda)=(\widetilde{\partial}+a_{0,1}\lambda)e^{q(\widetilde{\partial}+\lambda)-q(\widetilde{\partial})}.
\end{eqnarray*}
Consequently, setting $a=a_{0, 1}$, we get
\begin{eqnarray*}
f(\partial, \lambda)=g(\widetilde{\partial}, \lambda)=(\partial+a\lambda+b)e^{q(\partial+\lambda+b)-q(\partial+b)}.
\end{eqnarray*}
Note that $q(\partial+b)$ is a polynomial in $\C[\partial]$. Therefore, the conclusion holds.
\delete{Setting $a=a_{0, 1}$ and replacing $\partial$ by $\partial+b$ as we agree,
the desired assertion holds.}
\end{proof}

Finally, we present the classification result and the irreducibilities of generalized conformal modules over $Vir$ that are free and of rank one as $\C[\partial]$-modules.
\begin{thm}\label{thm-2}
Let $V=\C[\partial]v$ be free and of rank one as a $\C[\partial]$-module and $V$ be a non-trivial generalized conformal module over $Vir$. Then $V$ must be of the following form:
\begin{eqnarray}
L_\lambda v=e^{q(\lambda+\partial)-q(\partial)}(\partial+a\lambda+b)v,
\end{eqnarray}
for some $a$, $b\in \C$ and $q(\partial)\in \C[\partial]$. Denote it by $M_{q(\partial), a, b}$. Moreover, $M_{q(\partial), a, b}$ is irreducible if and only if $a\neq 0$.
\end{thm}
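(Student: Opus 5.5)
The form of the action follows from Lemma \ref{thm-1}. Write $L_\lambda v=f(\partial,\lambda)v$ with $f\in\C[\partial][[\lambda]]$. The module axioms (\ref{conf-1}) and (\ref{conf-2}) applied to $L$ and $v$ are exactly Eq.~(\ref{main}). Since $V$ is non-trivial, $f\neq 0$, so Lemma \ref{thm-1} gives $f(\partial,\lambda)=(\partial+a\lambda+b)e^{q(\partial+\lambda)-q(\partial)}$. Conversely, every such $f$ does define a module: it is the twist $M_{q(\partial),a,b}=(M_{a,b})_{q(\partial)}$ of the conformal module $M_{a,b}$ from Proposition \ref{pro-2}, and Proposition \ref{Pro-tw1} shows that the twist is again a generalized conformal module. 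This settles the first statement.

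For irreducibility, we reduce to the untwisted case. By Proposition \ref{Pro-tw1}, $M_{q(\partial),a,b}$ is irreducible if and only if $M_{a,b}$ is irreducible. By Proposition \ref{pro-2}, $M_{a,b}$ is irreducible if and only if $a\neq0$.

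One point needs checking here. In Proposition \ref{pro-2}, irreducibility is meant among conformal modules, whereas we need it among generalized conformal modules. A submodule of a conformal module is automatically conformal, so the two notions coincide for $M_{a,b}$.

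If we want to avoid relying on this, both directions can be argued directly.

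\emph{Case $a=0$.} The subspace $(\partial+b)M_{0,b}$ is a proper nonzero submodule, since
\[
L_\lambda\bigl((\partial+b)v\bigr)=(\partial+\lambda+b)(\partial+b)v\in(\partial+b)V[[\lambda]].
\]
The twist preserves it, because the twisted action is the original one multiplied by the invertible factor $e^{q(\partial+\lambda)-q(\partial)}$.

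\emph{Case $a\neq0$.} Take a nonzero submodule $W$ and a nonzero $g(\partial)v\in W$ of minimal degree. Then
\[
L_\lambda\bigl(g(\partial)v\bigr)=g(\partial+\lambda)(\partial+a\lambda+b)v\in W[[\lambda]].
\]
Extract the coefficient of the top power of $\lambda$, which is $\lambda^{\deg g+1}$ with coefficient $a\cdot\mathrm{lc}(g)\,v$. This shows $v\in W$, hence $W=V$.

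The main obstacle is already absorbed into Lemma \ref{thm-1}, so the remaining work is routine. The only care needed is this coefficient extraction in the presence of the twist. That is why I pass to the untwisted module through the invertibility of the exponential factor before taking coefficients.
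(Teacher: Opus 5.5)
Your proposal is correct and matches the paper's proof. The paper also takes the form of the action from Lemma~\ref{thm-1} and gets the irreducibility criterion $a\neq 0$ by combining Proposition~\ref{Pro-tw1} (twisting preserves irreducibility) with Proposition~\ref{pro-2}; your extra remarks — that conformal and generalized irreducibility coincide for $M_{a,b}$, plus the direct submodule and coefficient-extraction check — are valid supplements the paper leaves implicit.
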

\begin{proof}
It follows directly by Lemma \ref{thm-1} and  Proposition \ref{Pro-tw1}.
\delete{By Theorem \ref{thm-1}, we only need to consider the irreducibility of $V_{q(\partial), a, b}$. Since $e^{q(\lambda+\partial)-q(\partial)}$ is invertible in $\C[\partial][[\lambda]]$, it is easy to see that $V_{q(\partial), a, b}$ is irreducible if and only if $V_{0, a,b}$ is irreducible. Note that $V_{0, a,b}$ is just $M_{a,b}$ in Proposition \ref{pro-2}. Then by Proposition \ref{pro-2}, $V_{q(\partial), a, b}$ is irreducible if and only if $a\neq 0$.}
\end{proof}
\delete{
\begin{rmk}
Note that $V_{q(\partial), a, b}$ is conformal if and only if $q(\partial)$ is a constant polynomial.
\end{rmk}
Finally, we investigate the irreducibilities of these modules over $Vir$.
\begin{pro}
Let $V$ be a module over a Lie conformal algebra $\mathcal {A}$, and $q(\partial)\in \C[\partial]$. Define a new $\lambda$-action of $\mathcal {A}$ on $V$ as follows:
\begin{eqnarray*}
\widetilde{a_\lambda v}=e^{q(\partial+\lambda)-q(\partial)}a_\lambda v,\;\;\text{for any $a\in \mathcal {A}$ and $v\in V$.}
\end{eqnarray*}
Then $V$ is also a module over $\mathcal {A}$ under this new $\lambda$-action. Denote this module by $V_{q(\partial)}$. We call that $V_{q(\partial)}$ is the {\bf twist of $V$ by $q(\partial)$}.
\end{pro}}

We now turn to the question of when $M_{q(\partial),a,b}$ is conformal. The following two lemmas will be needed.
\begin{lem}\label{lem-cm1}
For any $q(x)\in\C[x]$,
$e^{q(x+y)-q(x)}\in\C[x,y]$ if and only if $q(x)\in\C$.
\end{lem}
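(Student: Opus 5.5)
The direction ``if'' is immediate: when $q(x)=c\in\C$ we have $q(x+y)-q(x)=0$, so $e^{q(x+y)-q(x)}=1\in\C[x,y]$. The content is the converse. We must first settle where the exponential lives. Write $D(x,y)=q(x+y)-q(x)$. Every monomial of $D$ has positive $y$-degree, so $e^{D}=\sum_{k\ge 0}D^k/k!$ converges $y$-adically in $\C[x][[y]]$. We argue by contradiction, so suppose $\deg q=n\ge 1$ and $e^{D}$ is a polynomial $P(x,y)$.

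The plan is to take logarithmic derivatives in $y$. In $\C[x][[y]]$ we have
\[
\frac{\partial}{\partial y}e^{D}=q'(x+y)e^{D}, \qquad\text{that is,}\qquad \frac{\partial P}{\partial y}=q'(x+y)\,P .
\]
Now compare total degrees in $\C[x,y]$. The polynomial $P$ is nonzero, because its constant term is $1$. Let $N=\deg_y P$. If $N=0$, the left side vanishes, but the right side is $q'(x+y)P\neq 0$ since $n\ge 1$. This is a contradiction.

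If $N\ge 1$, look instead at degrees in $y$. The left side has $y$-degree $N-1$. The polynomial $q'(x+y)$ has $y$-degree exactly $n-1$, because its leading term in $y$ is $n c_n y^{n-1}$ with $c_n\neq 0$. Since $\C[x]$ is a domain, the right side therefore has $y$-degree $N+n-1\ge N>N-1$. This is again a contradiction, so $n=0$ and $q(x)\in\C$.

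Alternatively, one can specialize $x=0$. Then $e^{q(y)-q(0)}$ would be a polynomial in $y$ satisfying $h'=q'h$, and the same degree count shows that $q'=0$.

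The only point needing care is justifying the differentiation identity. Formal $\partial_y$ is a continuous derivation on $\C[x][[y]]$, so it commutes with the convergent series defining the exponential. Here $\partial_y D=q'(x+y)$, which gives $\partial_y e^D=q'(x+y)e^D$. This chain-rule step is the main thing to check, and it holds termwise on $D^k/k!$. Everything else is a degree count.
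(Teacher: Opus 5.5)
Your proof is correct, but it takes a different route from the paper's.

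The paper argues analytically. It regards $e^{q(x+y)-q(x)}$ as an entire function on $\C^2$ and observes that it has no zeros. Since a nonconstant polynomial in $\C[x,y]$ always vanishes somewhere, the polynomial must be a constant $c$. Setting $x=0$ then gives $e^{q(y)-q(0)}=c$, from which $q\in\C$ follows.

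You never pass through the intermediate claim that $P$ is constant. Instead you stay inside $\C[x][[y]]$, use the identity $\partial_y P=q'(x+y)P$, and compare $y$-degrees. The left side is either $0$ or has $y$-degree $N-1$. The right side has $y$-degree exactly $N+n-1\geq N$, because $q'(x+y)$ has leading $y$-coefficient $nc_n\neq 0$ and $\C[x]$ is a domain.

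The paper's argument is shorter, but it has two gaps left to the reader. It silently uses that the formal series in $\C[x][[y]]$ agrees with the convergent analytic function. It also does not spell out the last step, from $e^{q(y)-q(0)}=c$ to $q\in\C$.

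Your argument is purely algebraic and works verbatim over any field of characteristic zero. It also matches the formal differential-equation style of Lemma \ref{lem-em1}. Finally, your $x=0$ variant (with $h=c$ constant, $0=h'=q'c$ forces $q'=0$) gives a rigorous version of the paper's final step.
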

\begin{proof}If $q(x)\in\C$, then $e^{q(x+y)-q(x)}=1\in\C[x,y]$. Conversely, we assume $e^{q(x+y)-q(x)}\in\C[x,y]$.
Note that $e^{q(x+y)-q(x)}$, viewed as a complex function on $\C^2$, has no zeros. So it can not be a non-constant polynomial in $\C[x,y]$.
Hence if $e^{q(x+y)-q(x)}\in\C[x,y]$, it must be a constant.
Say $e^{q(x+y)-q(x)}=c$. 
Setting $x=0$, we have $c=e^{q(y)-q(0)}$. Then we deduce that $q(y)\in \C$.
\end{proof}

\begin{lem}\label{lem-cm2}For any $q(x)\in\C[x]$ and $a,b\in\C$,
$(x+ay+b)e^{q(x+y)-q(x)}\in\C[x,y]$ if and only if $e^{q(x+y)-q(x)}\in\C[x,y]$.
\end{lem}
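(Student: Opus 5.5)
The reverse implication is immediate: if $E(x,y):=e^{q(x+y)-q(x)}\in\C[x,y]$, then multiplying by the polynomial $x+ay+b$ stays in $\C[x,y]$. So the work is in the forward implication. Assume $P(x,y):=(x+ay+b)E(x,y)\in\C[x,y]$. By Lemma \ref{lem-cm1}, showing $E\in\C[x,y]$ is the same as showing $q\in\C$. I would prove the latter directly by an analytic argument in the spirit of the proof of Lemma \ref{lem-cm1}.

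First, the formal power series $E(x,y)\in\C[x][[y]]$ is the Taylor expansion of the entire function $(x,y)\mapsto e^{q(x+y)-q(x)}$ on $\C^2$. Hence the identity $P=(x+ay+b)E$ in $\C[x][[y]]$ is an identity of entire functions on $\C^2$. Indeed, both sides are entire and have the same Taylor expansion at the origin, so they agree everywhere.

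Next, the entire function $E$ has no zeros. Therefore the zero set of the polynomial $P$ equals the zero set of the linear form $\ell(x,y)=x+ay+b$. This line is nonempty and irreducible, since the coefficient of $x$ is $1$. By the Nullstellensatz, together with the fact that $\ell$ is irreducible, we get $P=c\,\ell^m$ for some $c\in\C^\times$ and $m\geq 0$. On the complement of the line this gives
\begin{eqnarray*}
E(x,y)=c\,(x+ay+b)^{m-1}.
\end{eqnarray*}
By continuity, the identity extends across the line, where $E$ is finite and nonzero. If $m\neq 1$, the right-hand side either vanishes on the line (when $m\geq 2$) or blows up there (when $m=0$), which is a contradiction. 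Hence $m=1$ and $E\equiv c$ is constant. Setting $x=0$ gives $e^{q(y)-q(0)}=c$ for all $y$, so $q$ is constant, as in the proof of Lemma \ref{lem-cm1}.

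The main obstacle is the step relating formal and analytic objects, namely that the formal product equals the function product. This is standard, because $E$ is entire and its formal expansion in $y$ with polynomial coefficients in $x$ converges.

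A purely algebraic alternative would avoid this step. Set $y=0$ and compare degrees in $y$ along the specialization $x=-ay-b$. There $P(-ay-b,y)=0$, while $E(-ay-b,y)=e^{q((1-a)y-b)-q(-ay-b)}$ is nonzero, and this again forces the structure above. I would present the analytic argument.
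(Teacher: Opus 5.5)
Your proof is correct and follows essentially the same route as the paper: both compare the zero set of the polynomial $P$ with the line $x+ay+b=0$, using that $e^{q(x+y)-q(x)}$ never vanishes, and both apply the Nullstellensatz together with the irreducibility of $x+ay+b$. The only difference is the last step. The paper stops once it has $(x+ay+b)\mid P$ and cancels $x+ay+b$ in the integral domain $\C[x][[y]]$. You instead derive $P=c(x+ay+b)^m$ and use continuity to force $m=1$ and $E$ constant, which takes a little longer but spells out the formal-versus-analytic identification that the paper uses without comment.
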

\begin{proof}
The sufficient condition is obvious. We only show the necessary condition.
Let $I$ be an ideal of $\C[x,y]$.
Denote by  $\mathcal{Z}(I)$ the set of common zeros of all polynomials in $I$. It is a subset of $\C^2$.
For any subset $Z$ of $\C^2$, let $\mathcal{I}(Z)$ be the ideal of polynomials vanishing on $Z$. Then by Hilbert's Nullstellensatz, we have $\mathcal{I}(\mathcal{Z}(I))=\sqrt{I}$, where $\sqrt{I}$ is the radical ideal of $I$ (\cite{AM}).

If $(x+ay+b)e^{q(x+y)-q(x)}\in\C[x,y]$, we denote $I_1=((x+ay+b)e^{q(x+y)-q(x)})$ and $I_2=(x+ay+b)$. Since $\mathcal{Z}(I_1)=\mathcal{Z}(I_2)$, we have $\sqrt{I_1}=\sqrt{I_2}$. Since $x+ay+b$ is an irreducible polynomial of $\C[x,y]$, we have $\sqrt{I_2}=I_2$. Thus $(x+ay+b)e^{q(x+y)-q(x)}\in I_1\subseteq \sqrt{I_1}=\sqrt{I_2}=I_2$.
Therefore, $(x+ay+b)e^{q(x+y)-q(x)}=(x+ay+b)f(x,y)$ for some $f(x,y)\in\C[x,y]\subseteq \C[x][[y]]$. Since $\C[x][[y]]$ is a domain, we deduce $e^{q(x+y)-q(x)}=f(x,y)\in \C[x,y]$.
\end{proof}
\begin{pro}\label{pro-cm}
$M_{q(\partial),a,b}$ is conformal if and only if $q(\partial)\in\C$.
\end{pro}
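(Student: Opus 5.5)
The plan is to reduce the statement to the two preceding lemmas. By definition, $M_{q(\partial),a,b}=\C[\partial]v$ is conformal exactly when $L_\lambda w\in V[\lambda]$ for every $w\in V$. By conformal sesquilinearity (\ref{conf-1}), for $w=g(\partial)v$ we have $L_\lambda w=g(\partial+\lambda)L_\lambda v$. Since $g(\partial+\lambda)$ is a polynomial, the module is conformal if and only if the single element
\[
L_\lambda v=(\partial+a\lambda+b)e^{q(\partial+\lambda)-q(\partial)}v
\]
lies in $\C[\partial][\lambda]v$. The reverse implication is clear by multiplying by $g(\partial+\lambda)$. For the forward implication, take $g=1$.

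So I will show that $(\partial+a\lambda+b)e^{q(\partial+\lambda)-q(\partial)}\in\C[\partial,\lambda]$ holds if and only if $q(\partial)\in\C$. Here $\C[\partial,\lambda]$ is viewed inside $\C[\partial][[\lambda]]$, and the identification is legitimate because $V$ is free with basis $v$.

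Setting $x=\partial$ and $y=\lambda$, Lemma \ref{lem-cm2} shows this membership is equivalent to $e^{q(x+y)-q(x)}\in\C[x,y]$. Lemma \ref{lem-cm1} then shows that this holds if and only if $q(x)\in\C$. Chaining the two equivalences gives the claim. Conversely, if $q\in\C$, the exponential factor is $1$ and $M_{q(\partial),a,b}=M_{a,b}$, which is conformal by Proposition \ref{pro-2}.

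There is essentially no obstacle beyond bookkeeping. The one point to state carefully is that the power series $e^{q(x+y)-q(x)}\in\C[x][[y]]$ appearing in the lemmas is the same object as the coefficient in the module action. Both lemmas already treat it as an element of $\C[x][[y]]$ (and implicitly as the entire function it defines), so no further argument is needed.
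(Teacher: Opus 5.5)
Your proposal is correct and follows the paper's proof: reduce conformality to $(\partial+a\lambda+b)e^{q(\partial+\lambda)-q(\partial)}\in\C[\partial,\lambda]$, then chain Lemma \ref{lem-cm2} with Lemma \ref{lem-cm1}. Your sesquilinearity step, which shows it suffices to check $L_\lambda v$, just spells out what the paper leaves implicit (likewise $(h(\partial)L)_\lambda w=h(-\lambda)L_\lambda w$ handles general elements of $Vir$).
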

\begin{proof}Recall that \begin{eqnarray}
M_{q(\partial),a,b}=\C[\partial] v, \
L_\lambda v=e^{q(\lambda+\partial)-q(\partial)}(\partial+a\lambda+b)v.
\end{eqnarray}
Thus $M_{q(\partial),a,b}$ is conformal if and only if $e^{q(\lambda+\partial)-q(\partial)}(\partial+a\lambda+b)\in\C[\partial,\lambda]$. Then the assertion follows directly from Lemmas \ref{lem-cm1} and \ref{lem-cm2}.
\end{proof}

\begin{pro}Let $q_1(\partial), q_2(\partial)\in\C[\partial]$ and $a_1,a_2,b_1,b_2\in\C$. Then
$M_{q_1(\partial), a_1,b_1}\cong M_{q_2(\partial), a_2,b_2}$ if and only if $$q_1(\partial)-q_2(\partial)\in\C,\;\; a_1=a_2,\;\;b_1=b_2.$$
\end{pro}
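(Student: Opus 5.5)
The sufficiency direction is immediate. If $q_1(\partial)-q_2(\partial)=c\in\C$, then $q_1(\lambda+\partial)-q_1(\partial)=q_2(\lambda+\partial)-q_2(\partial)$. Together with $a_1=a_2$ and $b_1=b_2$, the two $\lambda$-actions on $\C[\partial]v$ coincide, so the identity map is an isomorphism. Alternatively, this follows from Remark \ref{Rmk-tw}, since $\Phi_c$ is the identity functor.

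For necessity, suppose $\phi: M_{q_1(\partial),a_1,b_1}\to M_{q_2(\partial),a_2,b_2}$ is an isomorphism. Write $v_1,v_2$ for the generators.
\begin{enumerate}
\item Since $\phi$ is a $\C[\partial]$-module isomorphism between free rank-one modules, $\phi(v_1)=c v_2$ with $c\in\C^\times$. Indeed, $\phi(v_1)=h(\partial)v_2$ must generate, and the units of $\C[\partial]$ are the nonzero constants.
\item Apply $\phi$ to $L_\lambda v_1$ and cancel $c$. This gives the identity in $\C[\partial][[\lambda]]$
\[
e^{q_1(\lambda+\partial)-q_1(\partial)}(\partial+a_1\lambda+b_1)=e^{q_2(\lambda+\partial)-q_2(\partial)}(\partial+a_2\lambda+b_2).
\]
\item Set $\lambda=0$ to get $b_1=b_2$.
\item Let $r=q_1-q_2$. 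Multiplying by the invertible element $e^{-(q_2(\lambda+\partial)-q_2(\partial))}$ yields
\[
e^{r(\lambda+\partial)-r(\partial)}(\partial+a_1\lambda+b)=\partial+a_2\lambda+b,
\]
where $b:=b_1=b_2$.
\item The right-hand side is a polynomial, so $(\partial+a_1\lambda+b)e^{r(\lambda+\partial)-r(\partial)}\in\C[\partial,\lambda]$.
\item Lemma \ref{lem-cm2} then gives $e^{r(\lambda+\partial)-r(\partial)}\in\C[\partial,\lambda]$, and Lemma \ref{lem-cm1} gives $r\in\C$.
\item With $r$ constant the exponential factor equals $1$, so $a_1=a_2$.
\end{enumerate}

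A more elementary route to the last two conclusions avoids the Nullstellensatz. Compare the coefficients of $\lambda$ in the identity from step 4. The left side contributes $r'(\partial)(\partial+b)+a_1$ and the right side contributes $a_2$. Hence $r'(\partial)(\partial+b)=a_2-a_1$ is a constant, which forces $r'=0$ by comparing degrees, and then $a_1=a_2$. I would present this direct version, since it needs nothing beyond coefficient comparison.

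The only step needing real care is step 1, the reduction of $\phi$ to a scalar multiple. Here one must use that $\phi$ is a bijection of $\C[\partial]$-modules, not merely a homomorphism: if $\phi(v_1)=h(\partial)v_2$ with $\deg h>0$, then $v_2$ would not lie in the image of $\phi$.
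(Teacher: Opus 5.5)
Your proof is correct, and the version you say you would present takes a genuinely different route from the paper.

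The paper never writes down the structure-series identity. Instead it:
\begin{enumerate}
\item applies the twisting functor of Remark~\ref{Rmk-tw} with parameter $-q_1(\partial)$ to the given isomorphism, obtaining $M_{a_1,b_1}\cong M_{q_2(\partial)-q_1(\partial),a_2,b_2}$, so the right-hand side is conformal;
\item invokes Proposition~\ref{pro-cm}, and hence Lemmas~\ref{lem-cm1} and~\ref{lem-cm2} with their entire-function and Nullstellensatz arguments, to conclude $q_1-q_2\in\C$;
\item quotes without proof that $M_{a_1,b_1}\cong M_{a_2,b_2}$ forces $(a_1,b_1)=(a_2,b_2)$.
\end{enumerate}
Your steps 5--6 are essentially a hands-on version of that same idea.

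Your preferred argument replaces all of this by reading off the $\lambda^0$ and $\lambda^1$ coefficients. These give $b_1=b_2$ and $r'(\partial)(\partial+b)=a_2-a_1$, whence $r'=0$ and $a_1=a_2$ by a degree count. This is self-contained and needs no analytic input. Your step~1 (an isomorphism of free rank-one $\C[\partial]$-modules is multiplication by a nonzero scalar) also supplies the fact about the $M_{a,b}$ that the paper takes for granted. It further shows that the twist parameter is already detected at first order, i.e.\ by the action of $L_{(1)}$ (equivalently $L_0\in W_1$) on the generator.

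What the paper's route buys instead is conceptual rather than computational. It identifies $q$ modulo constants as exactly the obstruction to conformality, in keeping with the twisted construction of Proposition~\ref{Pro-tw1}.
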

\begin{proof}
If $q_1(\partial)-q_2(\partial)\in\C, a_1=a_2,b_1=b_2$, then we have
$$e^{q_1(\lambda+\partial)-q_1(\partial)}(\partial+a_1\lambda+b_1)=
e^{q_2(\lambda+\partial)-q_2(\partial)}(\partial+a_2\lambda+b_2)$$ and consequently $M_{q_1(\partial), a_1,b_1}\cong M_{q_2(\partial), a_2,b_2}$.
Conversely, if $M_{q_1(\partial), a_1,b_1}\cong M_{q_2(\partial), a_2,b_2}$, then $$M_{a_1,b_1}\cong(M_{q_1(\partial), a_1,b_1})_{-q_1(\partial)}\cong (M_{q_2(\partial), a_2,b_2})_{-q_1(\partial)}\cong M_{q_2(\partial)-q_1(\partial), a_2,b_2}$$ is a conformal module (see Proposition \ref{Pro-tw1} and Remark \ref{Rmk-tw}). By Proposition \ref{pro-cm}, we have $q_1(\partial)-q_2(\partial)\in\C$. Note that two conformal modules $M_{a_1,b_1}$ and $M_{a_2, b_2}$ are isomorphic if and only if $a_1=a_2, b_1=b_2$. Then this conclusion follows.
\end{proof}

Finally, as an application of Theorem \ref{thm-2}, we will present a classification result of modules over $W_1$ which are free of rank one as $\C[L_{-1}]$-modules.

\begin{lem}\label{lem-w1}
Let $q(\partial)\in \C[\partial]$. Then for each $n\in \Z_+$, we obtain
\begin{eqnarray}
{\frac{\partial^n}{\partial\lambda^n}e^{q(\lambda+\partial)-q(\partial)}}|_{\lambda=0}=B_n(q'(\partial), q''(\partial), \ldots, q^{(n)}(\partial)),
\end{eqnarray}
where $B_n$ is the $n$-th Bell polynomial, i.e.,
\begin{eqnarray*}
B_n(x_1,x_2,\cdots, x_n)=\sum_{p_1+2p_2+\cdots+np_n=n}\frac{n!}{p_1!p_2!\cdots p_n!}(\frac{x_1}{1!})^{p_1}(\frac{x_2}{2!})^{p_2}\cdots (\frac{x_n}{n!})^{p_n}.
\end{eqnarray*}
\end{lem}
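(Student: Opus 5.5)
The argument reduces the statement to Fa\`a di Bruno's formula for the composition of the exponential with a polynomial. Fix $\partial$ and regard $h(\lambda):=q(\lambda+\partial)-q(\partial)$ as a polynomial in $\lambda$ with coefficients in $\C[\partial]$. Then $h(0)=0$ and $h^{(k)}(0)=q^{(k)}(\partial)$ for every $k\geq 1$. By Taylor's formula (Lemma \ref{lem-FT}, or simply the finite Taylor expansion of a polynomial), we can write
\[
h(\lambda)=\sum_{k\geq 1}q^{(k)}(\partial)\frac{\lambda^k}{k!}.
\]
The quantity $\frac{\partial^n}{\partial\lambda^n}e^{h(\lambda)}|_{\lambda=0}$ equals $n!$ times the coefficient of $\lambda^n$ in the power series $e^{h(\lambda)}\in\C[\partial][[\lambda]]$. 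So the task is to show that this coefficient is $B_n(q'(\partial),\ldots,q^{(n)}(\partial))/n!$.

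To compute the coefficient, I factor the exponential as
\[
e^{h(\lambda)}=\prod_{k\geq 1}\exp\Big(q^{(k)}(\partial)\frac{\lambda^k}{k!}\Big).
\]
This product is finite because $q$ is a polynomial. Each factor is legitimate in $\C[\partial][[\lambda]]$, since every exponent has zero constant term in $\lambda$. The factorization $e^{A+B}=e^Ae^B$ holds for commuting formal series without constant term, and all coefficients here lie in the commutative ring $\C[\partial]$. Expanding each factor gives
\[
\exp\Big(q^{(k)}(\partial)\frac{\lambda^k}{k!}\Big)=\sum_{p_k\geq 0}\frac{1}{p_k!}\Big(\frac{q^{(k)}(\partial)}{k!}\Big)^{p_k}\lambda^{kp_k}.
\]
Multiplying these out and collecting the terms with $p_1+2p_2+\cdots+np_n=n$, the coefficient of $\lambda^n$ is
\[
\sum\frac{1}{p_1!\cdots p_n!}\prod_k\Big(\frac{q^{(k)}(\partial)}{k!}\Big)^{p_k}.
\]
Factors with $k>n$ contribute to $\lambda^n$ only through their $p_k=0$ term, so they can be dropped. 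Multiplying by $n!$ gives exactly $B_n(q'(\partial),\ldots,q^{(n)}(\partial))$, as defined in the statement.

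The only delicate point is justifying the exponential manipulations formally rather than analytically. Specifically, one must check that $e^{A+B}=e^Ae^B$ holds in $\C[\partial][[\lambda]]$ when $A,B\in\lambda\C[\partial][[\lambda]]$. This follows from the binomial theorem in a commutative ring, and every sum involved is locally finite in each $\lambda$-degree. An alternative is to note that both sides are polynomial identities in the coefficients of $q$, and then verify them analytically for complex $\partial$; I would take the direct formal route described above. The case $n=0$ is immediate, since $B_0=1$ and $e^{h(0)}=1$.
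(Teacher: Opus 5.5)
Your proof is correct and follows the paper's route: Taylor-expand $q(\lambda+\partial)-q(\partial)=\sum_{k\geq 1}q^{(k)}(\partial)\lambda^k/k!$, then read off the coefficient of $\lambda^n$ in its exponential via the exponential generating function of the Bell polynomials. The only difference is that the paper cites the identity $e^{\sum_{k\geq 1}x_k\lambda^k/k!}=\sum_{n\geq 0}B_n(x_1,\ldots,x_n)\lambda^n/n!$ from Comtet, whereas you derive it directly by factoring the exponential as $\prod_k\exp(x_k\lambda^k/k!)$, which makes your argument self-contained.
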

\begin{proof}
By \cite[Section 3.3]{C}, we obtain
\begin{eqnarray}\label{eq-Bell}
e^{\sum_{n=1}^\infty\frac{x_n}{n!}\lambda^n}=\sum_{n=0}^\infty\frac{B_n(x_1, x_2, \ldots,x_n)}{n!}\lambda^n,
\end{eqnarray}
where $x_1$, $x_2$, $\ldots$, $x_n$, $\ldots$ are variables with $x_ix_j=x_jx_i$ for each $i$, $j$.
By Taylor expansion, we get
\begin{eqnarray}
q(\lambda+\partial)-q(\partial)=\sum_{n=1}^\infty\frac{q^{(n)}(\partial)}{n!}\lambda^n.
\end{eqnarray}
Therefore, by Eq.~(\ref{eq-Bell}), we have
\begin{eqnarray*}
e^{q(\lambda+\partial)-q(\partial)}&=&e^{\sum_{n=1}^\infty\frac{q^{(n)}(\partial)}{n!}\lambda^n}\\
&=&\sum_{n=0}^\infty\frac{B_n(q'(\partial), q''(\partial),\ldots,q^{(n)}(\partial))}{n!}\lambda^n.
\end{eqnarray*}
Note that $e^{q(\lambda+\partial)-q(\partial)}=\sum_{n=0}^\infty
\frac{\partial^n}{\partial\lambda^n}e^{q(\lambda+\partial)-q(\partial)}|_{\lambda=0}\frac{\lambda^n}{n!}$.
Then the conclusion holds.
\end{proof}
\begin{thm}\label{thm-3}
Let $M=\C[L_{-1}]v$ be free and of rank one as a $\C[L_{-1}]$-module and $M$ be a non-trivial module over $W_1$. Then the module action of $W_1$ on $M$ must be of the following form:
\begin{eqnarray*}
L_{j}. L_{-1}^nv&=&\sum_{k=0}^{j+1}\binom{n}{k}\frac{(j+1)!}{(j+1-k)!}L_{-1}^{n-k}((j+1-k)B_{j-k}(q'(L_{-1}), q''(L_{-1}),\ldots, q^{(j-k)}(L_{-1}))av\\
&&\quad+B_{j+1-k}(q'(L_{-1}), q''(L_{-1}),\ldots, q^{(j+1-k)}(L_{-1}))L_{-1} v)\;\;\text{ for any $j\in \Z_{\geq -1}$ and $n\in \Z_+$,}
\end{eqnarray*}
where $q(L_{-1})\in \C[L_{-1}]$ and $a\in \C$. Denote this module by $M_{q(\partial), a}$. 
\end{thm}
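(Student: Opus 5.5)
The plan is to transfer the problem to $Vir$ via Corollary~\ref{cor-4} and Proposition~\ref{pro-3}, invoke Theorem~\ref{thm-2}, and then translate back by expanding the $\lambda$-action with Lemma~\ref{lem-w1}.

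First, given the $W_1$-module $M=\C[L_{-1}]v$, choose $\xi=0$ in Corollary~\ref{cor-4}. This makes $M_0$ a $\text{Lie}(Vir)^e$-module on which $\partial$ acts as $L_{-1}$ and $L_{(i)}$ acts as $L_{i-1}$. By Corollary~\ref{cor-4}(c), $M_0$ is free of rank one over $\C[\partial]$ with basis $v$. By Proposition~\ref{pro-3}(b), it is a generalized conformal module over $Vir$ with
$$L_\lambda v=\sum_{j}(L_{j-1}.v)\frac{\lambda^j}{j!}.$$
It is non-trivial because $M$ is non-trivial as a $W_1$-module. Note that $L_{-1}$ acts injectively, since $M$ is free, so a trivial action of $W_1$ is impossible anyway. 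Theorem~\ref{thm-2} then gives
$$L_\lambda v=e^{q(\lambda+\partial)-q(\partial)}(\partial+a\lambda+b)v.$$
The constant term forces $b=0$, because $L_{-1}.v=L_{(0)}.v=\partial v$ when $\xi=0$. Equivalently, one can simply absorb $b$ into the identification.

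Next, compute the action on $L_{-1}^n v=\partial^n v$. By conformal sesquilinearity \eqref{conf-1},
$$L_\lambda(\partial^n v)=(\partial+\lambda)^n e^{q(\lambda+\partial)-q(\partial)}(\partial+a\lambda)v.$$
Extract $L_{(j+1)}.(\partial^n v)=L_j.L_{-1}^n v$ as $(j+1)!$ times the coefficient of $\lambda^{j+1}$. Expand $(\partial+\lambda)^n=\sum_k\binom nk\partial^{n-k}\lambda^k$. By Lemma~\ref{lem-w1}, the coefficient of $\lambda^m$ in $e^{q(\lambda+\partial)-q(\partial)}$ is $B_m(q',\dots,q^{(m)})/m!$. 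So the coefficient of $\lambda^{m}$ in $e^{q(\lambda+\partial)-q(\partial)}(\partial+a\lambda)$ is
$$\frac{B_m\,\partial}{m!}+a\frac{B_{m-1}}{(m-1)!}.$$
Taking $m=j+1-k$ and multiplying by $(j+1)!$ gives
$$\frac{(j+1)!}{(j+1-k)!}\big((j+1-k)aB_{j-k}+B_{j+1-k}\partial\big).$$
Substituting $\partial\mapsto L_{-1}$ yields exactly the stated formula. The sum terminates at $k=j+1$ since $\binom nk$ and the $\lambda$-degree bound allow nothing further.

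The only real care needed is the bookkeeping: the index shift $L_{(j+1)}\leftrightarrow L_j$ and the factorial normalisation $\lambda^j/j!$ in Proposition~\ref{pro-3}. The boundary terms also need checking. At $k=j+1$ the term $(j+1-k)B_{-1}$ vanishes. For $j=-1$ the formula should reduce to $L_{-1}.L_{-1}^nv=L_{-1}^{n+1}v$, which holds since $B_0=1$.
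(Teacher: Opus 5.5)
Your proposal is correct and follows the paper's proof step for step: pass to $M_0$ via Corollary~\ref{cor-4} with $\xi=0$ and Proposition~\ref{pro-3}, apply Theorem~\ref{thm-2}, get $b=0$ from $L_{(0)}.v=\partial v$, and read off $(j+1)!$ times the coefficient of $\lambda^{j+1}$ in $(\partial+\lambda)^n e^{q(\lambda+\partial)-q(\partial)}(\partial+a\lambda)v$ using Lemma~\ref{lem-w1}. Your observation that the non-triviality hypothesis is automatic is a correct small addition the paper does not make: $L_{-1}$ acts injectively on the free module, so $W_1$ cannot act trivially.
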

\begin{proof}
By Corollary \ref{cor-4}, a module $M$ over $W_1$ is equivalent to a module $M_0$ over $\text{Lie}(Vir)^e$, where $\partial v=L_{-1}v=L_{(0)}.v$. Therefore,  $M_0=\C[L_{-1}]v=\C[\partial]v$ is a module over $\text{Lie}(Vir)^e$. By Proposition \ref{pro-3}, $M_0$ is a generalized conformal module over $Vir$, which is free and of rank one as a $\C[\partial]$-module. By Theorem \ref{thm-2},  $M_0$ is of the form $M_{q(\partial), a, b}$ for some $q(\partial)\in \C[\partial]$ and $a$, $b\in \C$.

By Lemma \ref{lem-w1}, we obtain
\begin{eqnarray*}
e^{q(\lambda+\partial)-q(\partial)}=\sum_{i=0}^\infty B_i(q'(\partial), q''(\partial), \ldots, q^{(i)}(\partial))\lambda^{[i]}.
\end{eqnarray*}
Therefore, for each $n\in \Z_+$, we get
\begin{eqnarray*}
L_\lambda \partial^nv&=&(\partial+\lambda)^nL_\lambda v\\
&=&(\partial+\lambda)^ne^{q(\lambda+\partial)-q(\partial)}(\partial+a\lambda+b)v\\
&=&(\partial+\lambda)^n(\sum_{i=0}^\infty B_i(q'(\partial), q''(\partial), \ldots, q^{(i)}(\partial))\lambda^{[i]})(\partial+a\lambda+b)v.
\end{eqnarray*}
Write $L_\lambda \partial^nv=\sum_{i=0}^\infty (L_{(j)}\partial^nv)\lambda^{[j]}$. Comparing the coefficients of $\lambda^{[j]}$ on both sides, we obtain
\begin{eqnarray}
L_{(j)}. \partial^nv&=&\sum_{k=0}^j\binom{n}{k}\frac{j!}{(j-k)!}\partial^{n-k}((j-k)B_{j-k-1}(q'(\partial), q''(\partial),\ldots, q^{(j-k-1)}(\partial))av\nonumber\\
&&\quad +B_{j-k}(q'(\partial), q''(\partial),\ldots, q^{(j-k)}(\partial))(\partial+b)v)\;\; \text{for any $j$, $n\in \Z_{+}$.}\label{eq-act1}
\end{eqnarray}
By Proposition \ref{pro-3} and the definition of $M_0$, the module action of $\text{Lie}(Vir)^e$ on $M_0$ is given by
Eq.~(\ref{eq-act1}) with $b=0$ and $\partial v=L_{-1}v$. Then by Corollary \ref{cor-4}, we obtain that $M$ must be of the form $M_{q(\partial), a}$ for some $q(\partial)\in \C[\partial]$ and $a\in \C$.
\delete{
Note that the irreducibility of $M_{q(\partial), a}$ over $W_1$ is the same as that of $V_{q(\partial), a, 0}$ over $Vir$. Therefore, by Theorem \ref{thm-2}, $M_{q(\partial), a}$ is irreducible if and only if $a\neq 0$.

Let $\varphi:M_{q_1(\partial), a_1}\rightarrow M_{q_2(\partial), a_2}$ be an isomorphism of $W_1$-modules. Since
$\varphi(L_{-1}^i. v)=L_{-1}^i.\varphi(v)$ for any $i\in \Z_+$, and $\varphi$ is an isomorphism of vector spaces, we get $\varphi(v)=\alpha v$ for some $\alpha\in \C\backslash \{0\}$. Note that for $M_{q_1(\partial), a_1}$, we obtain
\delete{\begin{eqnarray*}
L_{j}. v=(j+1)B_{j}(q'_1(L_{-1}), q''_1(L_{-1}),\cdots, q_1^{(j)}(L_{-1}))av+B_{j+1}(q'_1(L_{-1}), q''_1(L_{-1}),\cdots, q_1^{(j+1)}(L_{-1}))L_{-1} v.
\end{eqnarray*}
Therefore, we get}
\begin{eqnarray*}
L_0. v=a_1v+q'_1(L_{-1})L_{-1}v.
\end{eqnarray*}
Therefore, we get
\begin{eqnarray*}
\varphi(L_0.v)&=&\varphi(a_1v+q'_1(L_{-1})L_{-1}v)=\alpha(av+q'_1(L_{-1})L_{-1}v)\\
&=&L_0.\varphi(v)=\alpha(a_2v+q'_2(L_{-1})L_{-1}v).
\end{eqnarray*}
Consequently, we have $a_1=a_2$ and $q'_1(L_{-1})=q'_2(L_{-1})$. Moreover, if $q'_1(\partial)=q'_2(\partial)$ and $a_1=a_2$, it is obvious that $M_{q_1(\partial), a_1}$ is isomorphic to $M_{q_2(\partial), a_2}$. Thus, this conclusion holds.}
\end{proof}

\delete{\begin{cor}
Let $M=\C[L_{-1}]v$ be free of rank one as a $\C[L_{-1}]$-module and $M$ be a non-trivial conformal module over $W_1$, i.e., there exists an integer $N\in \Z_+$ such that $L_n.u=0$ for any $n> N$ and $u\in M$. Then the module action of $W_1$ on $V$ must be of the following form:
\begin{eqnarray}\label{eq-1}
&L_{j}.(L_{-1}^nv)=(j+1)!(\binom{n}{j+1}L_{-1}^{n-j-1}L_{-1}v+\binom{n}{j}L_{-1}^{n-j}av),\;\;\text{for each $j\in \Z_{\geq-1}$ and $n\in \Z_+$.}
\end{eqnarray}
where $a\in \C$. Denote this module by $M_{a}$. Moreover, $M_{a}$ is irreducible if and only if $a\neq 0$.
\end{cor}
\begin{proof}
It is straightforward by Proposition \ref{pro-2} and Theorem \ref{thm-3}.
\end{proof}}

\begin{rmk}
A classification of modules over $W_1$ which are free of rank one as $\mathbb C[L_{-1}]$-modules has previously been obtained in \cite[Theorem 4.1]{HCL} via a direct Lie-algebraic approach. It can be verified by a straightforward comparison that our classification in Theorem 3.14 coincides with theirs, under the parameter correspondence
\[
a = 1-b, \qquad q'(x) = \alpha'(x),
\]
where $b$ and $\alpha(x)$ are the parameters in \cite[Theorem 4.1]{HCL}.
However, our approach via generalized conformal modules over $Vir$ provides a new conceptual framework for these modules. In particular, it reveals that the polynomial $q(\partial)$ arises naturally from the twisted construction of generalized conformal modules given in Proposition \ref{Pro-tw1}, and it places the classification within the broader context of the representation theory of Lie conformal algebras. This perspective not only unifies the existing result but also suggests possible generalizations to higher-rank modules and to other Lie conformal algebras.
\end{rmk}
\delete{Finally, we investigate the isomorphism classes of $M_{q(\partial), a}$.

\begin{thm}
$M_{q_1(\partial), a_1}$ is isomorphic to $M_{q_2(\partial), a_2}$ if and only if $q'_1(\partial)=q'_2(\partial)$ and $a_1=a_2$.
\end{thm}
\begin{proof}
Let $\varphi:M_{q_1(\partial), a_1}\rightarrow M_{q_2(\partial), a_2}$ be an isomorphism of $W_1$-modules. Since
$\varphi(L_{-1}^i. v)=L_{-1}^i.\varphi(v)$ for any $i\in \Z_+$, and $\varphi$ is an isomorphism of vector spaces, we get $\varphi(v)=\alpha v$ for some $\alpha\in \C\backslash \{0\}$. Note that for $M_{q_1(\partial), a_1}$, we obtain
\delete{\begin{eqnarray*}
L_{j}. v=(j+1)B_{j}(q'_1(L_{-1}), q''_1(L_{-1}),\cdots, q_1^{(j)}(L_{-1}))av+B_{j+1}(q'_1(L_{-1}), q''_1(L_{-1}),\cdots, q_1^{(j+1)}(L_{-1}))L_{-1} v.
\end{eqnarray*}
Therefore, we get}
\begin{eqnarray*}
L_0. v=a_1v+q'_1(L_{-1})L_{-1}v.
\end{eqnarray*}
Therefore, we get
\begin{eqnarray*}
\varphi(L_0.v)&=&\varphi(a_1v+q'_1(L_{-1})L_{-1}v)=\alpha(av+q'_1(L_{-1})L_{-1}v)\\
&=&L_0.\varphi(v)=\alpha(a_2v+q'_2(L_{-1})L_{-1}v).
\end{eqnarray*}
Consequently, we have $a_1=a_2$ and $q'_1(L_{-1})=q'_2(L_{-1})$. Moreover, if $q'_1(\partial)=q'_2(\partial)$ and $a_1=a_2$, it is obvious that $M_{q_1(\partial), a_1}$ is isomorphic to $M_{q_2(\partial), a_2}$. Thus, this conclusion holds.
\end{proof}}
\section{Generalized conformal modules of higher ranks over $Vir$}
\delete{Note that we have obtained a classification of generalized conformal modules of rank one over $Vir$ and presented a criterion on their
irreducibilities. In the conformal case, it was shown in \cite{CK} that any finite non-trivial irreducible conformal module over $Vir$ is of rank one .
One expects that this also holds for the general case. But this is false, unfortunately. We will construct a class of generalized conformal modules $V_{a, b, c(\partial)}$ of arbitrary finite rank $n$ over $Vir$, parametrized by $a$, $b\in C$ and a matrix $C(\partial)\in\mathbb C[\partial]^{n\times n}$,
 and investigate the irreducibilities and isomorphism classes of these modules. Moveover, we will present a concrete example to show that there exist irreducible generalized conformal modules of arbitrary ranks over $Vir$, which reveals how different the generalized conformal modules are from the conformal modules.
Undoubtedly, the theory of generalized conformal modules over Lie conformal algebras is very rich and interesting.
We hope our work will promote the development of this theory.}
In this section, we construct a class of generalized conformal modules $V_{a,b,C(\partial)}$ of arbitrary finite rank $n$ over $Vir$, parametrized by $a,b\in\mathbb C$ and $C(\partial)\in\mathbb C[\partial]^{n\times n}$, investigate their irreducibilities and isomorphism classes, and characterize when these modules are conformal. We show that, in contrast to the conformal case where every finite non-trivial irreducible module is of rank one \cite{CK}, irreducible generalized conformal modules of arbitrary finite rank over $Vir$ exist.
\delete{
We start with an example of generalized conformal modules of rank two over $Vir$.
\begin{ex}
Let $V=\C[\partial]v_1\oplus \C[\partial]v_2$ be a free $\C[\partial]$-module of rank two with the $\C[\partial]$-basis $v_1,v_2$. For any $a,b\in\C$,
define $$L_{\lambda}(v_1,v_2)=(v_1,v_2)\begin{pmatrix}(\partial+a\lambda+b) cos\, \lambda &(\partial+a\lambda+b) sin\, \lambda\\
-(\partial+a\lambda+b) sin\, \lambda &(\partial+a\lambda+b) cos\, \lambda
\end{pmatrix}.$$
Note that $sin\, \lambda=\sum_{n=0}^\infty \frac{(-1)^nx^{2n+1}}{(2n+1)!}\in \C[[\lambda]]$ and $cos\, \lambda=\sum_{n=0}^\infty \frac{(-1)^nx^{2n}}{(2n)!}\in \C[[\lambda]].$
Then one can check it directly that $V$ is a generalized conformal module of rank two over $Vir$. Let $w_1=v_1+iv_2$ and $w_2=v_1-iv_2$. Note that $L_\lambda w_1=(\partial+a\lambda+b)e^{i\lambda}w_1$ and $L_\lambda w_2=(\partial+a\lambda+b)e^{-i\lambda}w_2$. Therefore, $W_1=\C[\partial]w_1$ and $W_2=\C[\partial]w_2$ are submodules of $V$. Consequently, $V$ is not irreducible and $V=W_1\oplus W_1$ as generalized conformal modules of $Vir$.
\end{ex}}
\delete{\begin{rmk}
Note
$$\begin{pmatrix}(\partial+a\lambda+b) cos\, \lambda, &(\partial+a\lambda+b) sin\, \lambda\\
-(\partial+a\lambda+b) sin\, \lambda, &(\partial+a\lambda+b) cos\, \lambda
\end{pmatrix}=(\partial+a\lambda+b)\begin{pmatrix} cos\, \lambda, &sin\, \lambda\\
-sin\, \lambda, &cos\, \lambda
\end{pmatrix}$$
and
\begin{equation*}
\aligned
&sin\, \lambda=\sum_{n=0}^\infty \frac{(-1)^nx^{2n+1}}{(2n+1)!}\in \C[[\lambda]], \\
&cos\, \lambda=\sum_{n=0}^\infty \frac{(-1)^nx^{2n}}{(2n)!}\in \C[[\lambda]].
\endaligned
\end{equation*}
\end{rmk}}

Let $V$ be a generalized conformal module over $Vir$, which is free and of rank $n$ as a $\C[\partial]$-module.
Assume that $v_1,\ldots,v_n$ is a $\C[\partial]$-basis of $V$ and
$$L_\lambda v_i=\sum_{j=1}^n f_{j,i}(\partial,\lambda)v_j,$$
where $f_{j,i}(\partial,\lambda)\in \C[\partial][[\lambda]]$.
Set
$$A(\partial,\lambda)=(f_{j,i}(\partial,\lambda))\in (\C[\partial][[\lambda]])^{n\times n},$$
 which determines the module action of $Vir$ on $V$. \delete{We call $A(\partial,\lambda)$ the structure matrix of $V$ (with respect to $v_1,\cdots,v_n$).}
Then the condition $(\lambda-\mu)L_{\lambda+\mu}=L_\lambda L_\mu-L_\mu L_\lambda$ can be
reformulated as
\begin{equation}\label{e0}
(\lambda-\mu)A(\partial, \lambda+\mu)=A(\partial, \lambda)A(\partial+\lambda, \mu)
-A(\partial, \mu)A(\partial+\mu, \lambda).
\end{equation}
Solving Eq.~(\ref{e0}) in $(\C[\partial][\lambda])^{n\times n}$ is equivalent
to classifying conformal modules of rank $n$ over $Vir$, 
while
solving Eq.~(\ref{e0}) in $(\C[\partial][[\lambda]])^{n\times n}$ is equivalent
to classifying generalized conformal modules of rank $n$ over $Vir$.

\delete{Note $\partial-L_{(0)}$ lies in the center of $Lie(Vir)^e$, by Schur'Lemma $\partial-L_{(0)}$
acts as a scalar on $V$. Say $L_{(0)}=\partial+b$ for some $b\in \C$.
This is equivalent to $A(\partial,0)=(\partial+b)E$.
Replacing $\partial+b$ by $\partial$, one can assume $A(\partial,0)=\partial E$.}

\begin{lem}\label{lem-1}
Let $A(\partial, \lambda)$ be a solution of Eq.~(\ref{e0}). If there exists $B(\partial)=(b_{ij}(\partial))\in \C[[\partial]]^{n\times n}$ such that $B(\partial)^{-1}A(\partial,\lambda)B(\partial+\lambda)\in(\C[\partial][[\lambda]])^{n\times n}$, then
$$B(\partial)^{-1}A(\partial,\lambda)B(\partial+\lambda)$$
is also a solution of Eq.~(\ref{e0}). \delete{where
$B(\partial)=(b_{ij}(\partial))$ and $B(\partial+\lambda)=(b_{ij}(\partial+\lambda))$.}
\end{lem}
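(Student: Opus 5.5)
We verify Eq.~(\ref{e0}) for $\widetilde A(\partial,\lambda):=B(\partial)^{-1}A(\partial,\lambda)B(\partial+\lambda)$ directly, by substitution and cancellation of the $B$-factors. This is the matrix analogue of the argument in Proposition~\ref{Pro-tw1}, where $e^{q(\partial)}$ is replaced by $B(\partial)$.

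First we fix the setting. We work in the ring $\C[[\partial,\lambda,\mu]]$ and its matrix rings. Both sides of Eq.~(\ref{e0}) for $A$ lie in $(\C[\partial][[\lambda,\mu]])^{n\times n}\subset \C[[\partial,\lambda,\mu]]^{n\times n}$. The substitutions $\partial\mapsto\partial+\lambda$, $\partial\mapsto\partial+\mu$ and $\partial\mapsto \partial+\lambda+\mu$ are well defined on $B(\partial)\in\C[[\partial]]^{n\times n}$ as elements of $\C[[\partial,\lambda,\mu]]^{n\times n}$. They are ring homomorphisms, being continuous substitutions of a variable by an element of the maximal ideal.

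Implicit in the statement is that $B(\partial)$ is invertible in $\C[[\partial]]^{n\times n}$, i.e.\ $\det B(0)\neq 0$. Then $B(\partial+\lambda)$ is invertible with inverse $B^{-1}(\partial+\lambda)$, since substitution is a ring homomorphism. Similarly $\widetilde A(\partial+\lambda,\mu)=B(\partial+\lambda)^{-1}A(\partial+\lambda,\mu)B(\partial+\lambda+\mu)$, because substituting $\partial\mapsto\partial+\lambda$ in $A(\partial,\mu)$ commutes with these identifications.

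The computation then runs as follows. We have
\[
\widetilde A(\partial,\lambda)\widetilde A(\partial+\lambda,\mu)=B(\partial)^{-1}A(\partial,\lambda)A(\partial+\lambda,\mu)B(\partial+\lambda+\mu),
\]
and symmetrically
\[
\widetilde A(\partial,\mu)\widetilde A(\partial+\mu,\lambda)=B(\partial)^{-1}A(\partial,\mu)A(\partial+\mu,\lambda)B(\partial+\lambda+\mu).
\]
Substituting $\lambda\mapsto\lambda+\mu$ in the definition of $\widetilde A$ gives
\[
(\lambda-\mu)\widetilde A(\partial,\lambda+\mu)=B(\partial)^{-1}(\lambda-\mu)A(\partial,\lambda+\mu)B(\partial+\lambda+\mu).
\]
The required identity for $\widetilde A$ is therefore obtained from Eq.~(\ref{e0}) for $A$ by multiplying on the left by $B(\partial)^{-1}$ and on the right by $B(\partial+\lambda+\mu)$. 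Hence it holds in $\C[[\partial,\lambda,\mu]]^{n\times n}$.

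It remains to conclude. By hypothesis $\widetilde A\in(\C[\partial][[\lambda]])^{n\times n}$. Both sides of Eq.~(\ref{e0}) for $\widetilde A$ then lie in $(\C[\partial][[\lambda,\mu]])^{n\times n}$, which embeds injectively into $\C[[\partial,\lambda,\mu]]^{n\times n}$. So the identity holds in the required ring, and $\widetilde A$ is a solution of Eq.~(\ref{e0}).

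The main obstacle is this bookkeeping. We must check that shifted power series in $\partial$ make sense and that substitution is multiplicative, and that expanding in the larger ring loses nothing because the embedding is injective. The algebra itself is immediate.
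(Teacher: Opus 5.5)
Your proposal is correct and is the direct verification the paper has in mind when it calls the lemma straightforward. In the products, the inner factors $B(\partial+\lambda)B(\partial+\lambda)^{-1}$ and $B(\partial+\mu)B(\partial+\mu)^{-1}$ cancel, so Eq.~(\ref{e0}) for $A$ is multiplied on the left by $B(\partial)^{-1}$ and on the right by $B(\partial+\lambda+\mu)$, exactly as in Proposition~\ref{Pro-tw1}; your bookkeeping in $\C[[\partial,\lambda,\mu]]$ supplies the details the paper leaves implicit.
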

\begin{proof}
It is straightforward.
\end{proof}
If $B(\partial)^{-1}$ commutes with $A(\partial,\lambda)$, then we have
$$B(\partial)^{-1}A(\partial,\lambda)B(\partial+\lambda)=A(\partial,\lambda)B(\partial)^{-1}B(\partial+\lambda).$$
Note that $$B(\partial+\lambda)=\sum_{k=0}^\infty B^{(k)}(\partial)\frac{\lambda^k}{k!},$$ where
$B(\partial):=(b_{ij}(\partial))\in \C[[\partial]]^{n\times n}$ and $B^{(k)}(\partial)=(b_{ij}^{(k)}(\partial))$.
For our purpose, we need a criterion on whether $B(\partial)^{-1}B(\partial+\lambda)\in(\C[\partial][[\lambda]])^{n\times n}$.

\begin{lem}\label{lem-BC}If $C(\partial):=B(\partial)^{-1}B'(\partial)\in \C[\partial]^{n\times n}$, then
\begin{equation}\label{e1}
B(\partial)^{-1}B^{(k)}(\partial)=(\frac{d}{d\partial}+C(\partial))^kI_n\in \C[\partial]^{n\times n}
\end{equation}
for any $k\in\Z_+$ and $$B(\partial)^{-1}B(\partial+\lambda)=\sum_{k=0}^\infty (\frac{d}{d\partial}+C(\partial))^kI_n\frac{\lambda^k}{k!}\in
(\C[\partial][[\lambda]])^{n\times n}.$$
\end{lem}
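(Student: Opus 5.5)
We prove Eq.~(\ref{e1}) by induction on $k$. Here $(\frac{d}{d\partial}+C(\partial))^kI_n$ means the operator $\mathcal{L}=\frac{d}{d\partial}+C(\partial)$, with $C(\partial)$ acting by left multiplication, applied $k$ times to the constant matrix $I_n$. For $k=0$ both sides equal $I_n$.

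For the inductive step, set $D_k(\partial):=B(\partial)^{-1}B^{(k)}(\partial)$, so that $B^{(k)}(\partial)=B(\partial)D_k(\partial)$. Differentiating with the product rule gives
\[
B^{(k+1)}(\partial)=B'(\partial)D_k(\partial)+B(\partial)D_k'(\partial).
\]
Multiplying on the left by $B(\partial)^{-1}$ and using $B(\partial)^{-1}B'(\partial)=C(\partial)$, we get
\[
D_{k+1}(\partial)=D_k'(\partial)+C(\partial)D_k(\partial)=\mathcal{L}D_k(\partial).
\]
Hence $D_k=\mathcal{L}^kI_n$. Since $C(\partial)$ has polynomial entries and $\mathcal{L}$ maps $\C[\partial]^{n\times n}$ to itself, induction shows $D_k\in\C[\partial]^{n\times n}$ for every $k$.

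For the series identity, note that the assumption implicitly requires $B(\partial)$ to be invertible in $\C[[\partial]]^{n\times n}$. Apply the formal Taylor expansion $B(\partial+\lambda)=\sum_{k} B^{(k)}(\partial)\frac{\lambda^k}{k!}$ entrywise, as in Lemma~\ref{lem-FT}. Multiplying on the left by $B(\partial)^{-1}$ coefficientwise in $\lambda$ then gives
\[
B(\partial)^{-1}B(\partial+\lambda)=\sum_{k} D_k(\partial)\frac{\lambda^k}{k!}.
\]
Each coefficient $D_k(\partial)$ lies in $\C[\partial]^{n\times n}$, so the sum belongs to $(\C[\partial][[\lambda]])^{n\times n}$.

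The only delicate point is justifying these manipulations in the setting of formal power series in two variables. Specifically, the Taylor expansion of $B(\partial+\lambda)$ has to make sense in $\C[[\partial,\lambda]]$, and left multiplication by $B(\partial)^{-1}$ has to be compatible with extracting coefficients of $\lambda^k$. Both hold because $B(\partial)^{-1}$ does not involve $\lambda$.
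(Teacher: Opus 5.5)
Your proof is correct and follows the paper's argument essentially verbatim. The paper also sets $C_k(\partial)=B(\partial)^{-1}B^{(k)}(\partial)$ and differentiates $B^{(k)}=BC_k$ by the product rule to get $C_{k+1}=C_k'+CC_k=(\frac{d}{d\partial}+C(\partial))C_k$. It then reads off the series identity from the Taylor expansion $B(\partial+\lambda)=\sum_k B^{(k)}(\partial)\frac{\lambda^k}{k!}$. Your remarks on the invertibility of $B(\partial)$ in $\C[[\partial]]^{n\times n}$ and on working coefficientwise in $\lambda$ make explicit what the paper leaves implicit.
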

\begin{proof}
Denote $C_k(\partial)=B(\partial)^{-1}B^{(k)}(\partial), k\in\Z_+$.
We prove this lemma by induction on $k$. 
The case for $k=0$ is obvious.
Assume $C_k(\partial)=(\frac{d}{d\partial}+C(\partial))^kI_n\in \C[\partial]^{n\times n}$. Since
$B^{(k)}(\partial)=B(\partial)C_k(\partial)$, then we have
\begin{equation*}
\aligned
B^{(k+1)}(\partial)
=(B(\partial)C_k(\partial))'
=B(\partial)C'_k(\partial)+B'(\partial)C_k(\partial)
=B(\partial)(C'_k(\partial)+C(\partial)C_k(\partial)).
\endaligned
\end{equation*}
Hence we get
$$C_{k+1}(\partial)=C'_k(\partial)+C(\partial)C_k(\partial)
=(\frac{d}{d\partial}+C(\partial))C_k(\partial)
=(\frac{d}{d\partial}+C(\partial))^{k+1}I_n\in \C[\partial]^{n\times n}.$$
Then this conclusion holds.
\end{proof}

\delete{As in the proof of Lemma \ref{lem-BC}, we obtain a sequence of matrices $\{C_k(\partial)~|~ k\in\Z_+\}$ satisfying the recursive relations
\begin{equation}\label{e1}
C_{k+1}(\partial)=C'_k(\partial)+C(\partial)C_k(\partial),
\end{equation}
where $C_0(\partial)=I_n$. Then we deduce  $$C_{k}(\partial)=(\frac{d}{d\partial}+C(\partial))^kI_n,$$ indicating that the sequence of matrices $\{C_k(\partial)~|~ k\in\Z_+\}$ is completely determined by a single matrix $C(\partial)\in (\C[\partial])^{n\times n}$.
Then we have
$$B(\partial)^{-1}B(\partial+\lambda)=\sum_{k=0}^\infty C_k(\partial)\frac{\lambda^k}{k!}.$$
Consequently, we get
$$A(\partial,\lambda)B(\partial)^{-1}B(\partial+\lambda)=A(\partial,\lambda)\sum_{k=0}^\infty C_k(\partial)\frac{\lambda^k}{k!}.$$}

\begin{rmk}When we compute $B(\partial)^{-1}A(\partial,\lambda)B(\partial+\lambda)$ in the case of $A(\partial,\lambda)$ commuting with $B(\partial)^{-1}$, we do not need the explicit expression of $B(\partial)$. 
What really works is $C(\partial)=B(\partial)^{-1}B'(\partial)$.
\end{rmk}

In the sequel, let $C(\partial)\in \C[\partial]^{n\times n}$. Denote $\mathfrak{L}=\frac{d}{d\partial}+C(\partial)$, which is a linear operator on $\C[\partial]^{n\times n}$. Then
we have the following result.
\begin{pro}\label{pro-gen1}
For any $a$, $b\in \C$,
$$(\partial+a \lambda+b)\sum_{k=0}^\infty (\mathfrak{L}^kI_n)\frac{\lambda^k}{k!}$$
is a solution of Eq.~(\ref{e0}).
\end{pro}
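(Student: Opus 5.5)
Write $E(\partial,\lambda):=\sum_{k\ge0}(\mathfrak{L}^kI_n)\frac{\lambda^k}{k!}\in(\C[\partial][[\lambda]])^{n\times n}$. The plan is to reduce the statement to a matrix analogue of Eq.~(\ref{main2}) and then imitate Lemma \ref{lem-em3}. The scalar factor $(\partial+a\lambda+b)$ already satisfies Eq.~(\ref{e0}) when viewed as $(\partial+a\lambda+b)I_n$, by Proposition \ref{pro-2}. Since it is a scalar, it commutes with every matrix. So it suffices to prove the cocycle identity
\begin{equation*}
E(\partial,\lambda+\mu)=E(\partial,\lambda)E(\partial+\lambda,\mu).
\end{equation*}
Once this holds, write $f(\partial,\lambda)=\partial+a\lambda+b$. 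Then
\begin{equation*}
f(\partial,\lambda)E(\partial,\lambda)\,f(\partial+\lambda,\mu)E(\partial+\lambda,\mu)=f(\partial,\lambda)f(\partial+\lambda,\mu)E(\partial,\lambda+\mu).
\end{equation*}
The symmetric term with $\lambda,\mu$ swapped gives the same $E(\partial,\lambda+\mu)$, because $\lambda+\mu$ is symmetric. Subtracting the two and applying Eq.~(\ref{main}) for $f$ yields $(\lambda-\mu)f(\partial,\lambda+\mu)E(\partial,\lambda+\mu)$, which is Eq.~(\ref{e0}).

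To prove the cocycle identity I would work formally, avoiding any analytic solution $B$ of $B'=BC$. Fix $\lambda$ and regard both sides as power series in $\mu$ with coefficients in $\C[\partial][[\lambda]]^{n\times n}$. Set $X(\mu):=E(\partial,\lambda+\mu)$ and $Y(\mu):=E(\partial,\lambda)E(\partial+\lambda,\mu)$. At $\mu=0$ both equal $E(\partial,\lambda)$, since $E(\partial+\lambda,0)=I_n$.

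For the $\mu$-derivatives, first note that $\frac{\partial}{\partial\lambda}E(\partial,\lambda)=\sum_k(\mathfrak{L}^{k+1}I_n)\frac{\lambda^k}{k!}$. I claim this equals $E(\partial,\lambda)C(\partial+\lambda)$, and this is the key step. Its coefficient form is a recursion on $D_k:=\mathfrak{L}^kI_n$: using the Leibniz rule, the claim is equivalent to
\begin{equation*}
D_{k+1}=\sum_{j=0}^k\binom{k}{j}D_jC^{(k-j)}.
\end{equation*}
I would prove this by induction on $k$, applying $\mathfrak{L}$ to both sides. Use $\mathfrak{L}(D_jC^{(m)})=D_{j+1}C^{(m)}+D_jC^{(m+1)}$, which holds because $\mathfrak{L}$ acts by left multiplication by $C$ plus a derivation. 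Then Pascal's rule closes the induction.

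Given the claim, $X'(\mu)=X(\mu)C(\partial+\lambda+\mu)$, since differentiating in $\mu$ is the same as differentiating in the second argument. Likewise $Y'(\mu)=E(\partial,\lambda)E(\partial+\lambda,\mu)C(\partial+\lambda+\mu)=Y(\mu)C(\partial+\lambda+\mu)$. So $X$ and $Y$ solve the same first-order linear ODE in $\mu$ with the same initial value. Comparing coefficients of $\mu^m$ recursively, exactly as in the uniqueness part of Lemma \ref{lem-em1}, gives $X=Y$.

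One bookkeeping point concerns the substitution $\partial\mapsto\partial+\lambda$ in $E$ and the expansion $C(\partial+\lambda+\mu)$. All products are well defined in $\C[\partial][[\lambda,\mu]]$, because each entry of each $D_k$ is a polynomial in $\partial$. The main obstacle is the identity $\frac{\partial}{\partial\lambda}E=E\,C(\partial+\lambda)$. The subtlety there is the order of multiplication: $C$ must appear on the right, which matches the right-multiplicative nature of $B^{-1}B'$ in Lemma \ref{lem-BC}. Once that identity is established, everything else is formal.
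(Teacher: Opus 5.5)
Your proof is correct, but it takes a different route from the paper. The paper's proof is a one-liner from Lemmas \ref{lem-1} and \ref{lem-BC}:
\begin{enumerate}
\item choose $B(\partial)\in\C[[\partial]]^{n\times n}$ with $B(\partial)^{-1}B'(\partial)=C(\partial)$;
\item since $(\partial+a\lambda+b)I_n$ solves Eq.~(\ref{e0}) and commutes with $B(\partial)^{-1}$, Lemma \ref{lem-1} shows that $B(\partial)^{-1}(\partial+a\lambda+b)B(\partial+\lambda)=(\partial+a\lambda+b)B(\partial)^{-1}B(\partial+\lambda)$ is again a solution;
\item Lemma \ref{lem-BC} identifies $B(\partial)^{-1}B(\partial+\lambda)$ with your $E(\partial,\lambda)$.
\end{enumerate}
In the paper, your cocycle identity is never stated. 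It is automatic from the coboundary form $B(\partial)^{-1}B(\partial+\lambda)$ and is buried in the ``straightforward'' Lemma \ref{lem-1}.

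You never introduce $B$. Your recursion $D_{k+1}=\sum_{j}\binom{k}{j}D_jC^{(k-j)}$, i.e.\ $\partial_\lambda E=E\,C(\partial+\lambda)$, is the $B$-free content of Lemma \ref{lem-BC}. In the paper's notation it reads $B(\partial)^{-1}B'(\partial+\lambda)=B(\partial)^{-1}B(\partial+\lambda)C(\partial+\lambda)$. Uniqueness for the formal linear ODE in $\mu$ then gives the matrix analogue of Eq.~(\ref{main2}). The matrix version of Lemma \ref{lem-em3} finishes the proof.

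The two approaches trade off as follows. The paper's argument is shorter and explains the construction as a gauge transformation of the conformal module $(\partial+a\lambda+b)I_n$. It generalizes the rank-one twist (take $B=e^{q}$) and anticipates the gauge matrix $G$ in Proposition \ref{iso-1}. However, it silently assumes an invertible formal solution of $B'=BC$ exists, for instance the recursive one with $B(0)=I_n$. It also requires making sense of $B(\partial+\lambda)$ and its inverse in a larger ring such as $\C[[\partial,\lambda,\mu]]$. Your argument is self-contained, stays inside $\C[\partial][[\lambda,\mu]]$ throughout, and costs only a routine Pascal-rule induction. It also isolates $E$ as a multiplicative cocycle that can be multiplied by any scalar solution of Eq.~(\ref{main}).
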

\begin{proof}
Note that $A(\partial, \lambda)=(\partial+a\lambda+b)I_n$ is a solution of Eq.~(\ref{e0}) and $(\partial+a\lambda+b)I_n$ commutes with each matrix in $\C[[\partial]]^{n\times n}$. Then the assertion follows from Lemmas \ref{lem-1} and \ref{lem-BC} immediately. 
\end{proof}
By Proposition \ref{pro-gen1}, $V=\C[\partial]v_1\oplus \C[\partial]v_2\oplus \cdots \oplus \C[\partial]v_n$ is a generalized conformal module over $Vir$ defined by
\begin{eqnarray*}
\aligned
L_\lambda (v_1,\ldots, v_n)&=(v_1,\ldots, v_n)(\partial+a \lambda+b)\sum_{k=0}^\infty (\mathfrak{L}^kI_n)\frac{\lambda^k}{k!}=(v_1,\ldots, v_n)(\partial+a \lambda+b)(e^{\lambda\mathfrak{L}}I_n)\\
&=(v_1,\ldots, v_n)\sum_{k=0}^\infty ((\partial+b)(\mathfrak{L}^kI_n)+ka(\mathfrak{L}^{k-1}I_n))\frac{\lambda^k}{k!}
.
\endaligned
\end{eqnarray*}
Denote this generalized conformal module by $V_{a,b,C(\partial)}$.
\begin{ex}
Take $C(\partial)=C\in\C^{n\times n}$, i.e., a complex matrix.
Then by Eq.~(\ref{e1}), we have $\mathfrak{L}^kI_n=C^k$ for each $k\in \Z_+$. Therefore, by Proposition \ref{pro-gen1},
$$(\partial+a \lambda+b)\sum_{k=0}^\infty C^k\frac{\lambda^k}{k!}=(\partial+a \lambda+b)e^{\lambda C}$$
is a solution of Eq.~(\ref{e0}). Therefore, $V_{a,b,C} = \mathbb{C}[\partial]v_1\oplus \mathbb{C}[\partial]v_2\oplus \cdots \oplus \mathbb{C}[\partial]v_n$ is a generalized conformal module over $Vir$ defined by
\begin{eqnarray*}
L_\lambda (v_1, v_2, \ldots, v_n)=(v_1, v_2, \ldots, v_n)(\partial + a\lambda + b)e^{\lambda C}.
\end{eqnarray*}
By the fundamental theorem of algebra, the matrix $C$ has at least one eigenvalue $\mu \in \mathbb{C}$ and a corresponding nonzero eigenvector
$\xi \in \mathbb{C}^n$, i.e., $C \xi = \mu \xi$.  Then one can check that $U=\C[\partial] ((v_1, v_2, \ldots, v_n)\xi)$ is a submodule of $V$. Therefore, if $n>1$, then $V_{a,b,C}$ is not irreducible.

In particular, if we take $n=2$ and $C=
\begin{pmatrix}0&1\\
-1&0
\end{pmatrix}$, then we have $$(\partial+a \lambda+b)e^{\lambda C}=(\partial+a \lambda+b)\begin{pmatrix} \text{cos}\, \lambda &\text{sin}\, \lambda\\
-\text{sin}\, \lambda &\text{cos}\, \lambda
\end{pmatrix}.$$
Let $w_1=v_1+iv_2$ and $w_2=v_1-iv_2$. Note that $L_\lambda w_1=(\partial+a\lambda+b)e^{i\lambda}w_1$ and $L_\lambda w_2=(\partial+a\lambda+b)e^{-i\lambda}w_2$. Therefore, $W_1=\C[\partial]w_1$ and $W_2=\C[\partial]w_2$ are submodules of $V_{a,b,C}$. Consequently, we have $V_{a,b,C}=W_1\oplus W_2$ as the direct sum of generalized conformal modules over $Vir$.
\end{ex}

Next, we will discuss the irreducibility of $V_{a,b,C(\partial)}$.

\begin{pro}\label{pro-irr1}
Let $C(\partial)\in \C[\partial]^{n\times n}$ and $b\in \C$. If $a=0$, then the generalized conformal module $V_{0,b,C(\partial)}$ is reducible.
\end{pro}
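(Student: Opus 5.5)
Since $a=0$, the action on $V_{0,b,C(\partial)}$ is
\[
L_\lambda (v_1,\ldots,v_n)=(v_1,\ldots,v_n)(\partial+b)\,(e^{\lambda\mathfrak{L}}I_n).
\]
Every coefficient therefore carries the left factor $(\partial+b)$ coming from $(\partial+b)I_n$. The plan is to exhibit the proper nonzero submodule $U=(\partial+b)V$, in analogy with the rank-one case $M_{0,b}$ in Proposition \ref{pro-2}.

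\textbf{Step 1: $U$ is a submodule.} $U$ is clearly a $\C[\partial]$-submodule. By conformal sesquilinearity (\ref{conf-1}), for any $w=\sum_i g_i(\partial)v_i\in V$ we have $L_\lambda w=\sum_i g_i(\partial+\lambda)L_\lambda v_i$. Because $(\partial+b)I_n$ is scalar, it commutes with $g_i(\partial+\lambda)$ and with the matrix $e^{\lambda\mathfrak{L}}I_n\in(\C[\partial][[\lambda]])^{n\times n}$. Hence
\[
L_\lambda w=(\partial+b)\sum_i\sum_j g_i(\partial+\lambda)(e^{\lambda\mathfrak{L}}I_n)_{ji}v_j\in (\partial+b)V[[\lambda]]\subseteq U[[\lambda]].
\]
Since $U\subseteq V$, this gives $L_\lambda U\subseteq U[[\lambda]]$.

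\textbf{Step 2: $U$ is proper and nonzero.} $V$ is free over $\C[\partial]$, so $(\partial+b)v_1\neq 0$ and $U\neq 0$. Also $v_1\notin U$: writing $v_1=(\partial+b)\sum_i h_i v_i$ and comparing coefficients in the free basis gives $1=(\partial+b)h_1$, which is impossible for a polynomial $h_1$. Hence $V_{0,b,C(\partial)}$ is reducible.

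The only point needing care is the coefficientwise bookkeeping in $V[[\lambda]]$ in Step 1. This is harmless, because each $\lambda$-coefficient is a finite $\C[\partial]$-combination of the $v_j$ premultiplied by $(\partial+b)$. Alternatively, one can use the vector $u=\sum_i(\partial+b)v_i$-type argument via the twisted form, but the direct argument above is cleaner and I expect no real obstacle.
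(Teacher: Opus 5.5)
Your proposal is correct and matches the paper's proof: the paper uses exactly the submodule $(\partial+b)V_{0,b,C(\partial)}$. It additionally notes that this submodule is isomorphic to $V_{1,b,C(\partial)}$ via $\mathbf{f}\mapsto(\partial+b)\mathbf{f}$, which is not needed for reducibility; your explicit checks of invariance and properness supply the details the paper leaves implicit.
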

\begin{proof}
  $(\partial+b)V_{0,b,C(\partial)}\cong V_{1,b,C(\partial)}$ is a proper submodule of $V_{0,b,C(\partial)}$. Therefore, $V_{0,b,C(\partial)}$ is reducible.
\end{proof}
\begin{rmk}
Note that $V_{0,b,C(\partial)}/(\partial+b)V_{0,b,C(\partial)}$ is isomorphic to the direct sum of $n$ copies of $\C_b$.
\end{rmk}

By Proposition \ref{pro-irr1}, we only need to consider the irreducibility of $V_{a,b,C(\partial)}$ when $a\neq 0$.
It is more convenient to realize $V_{a,b,C(\partial)}$ as
$$V_{a,b,C(\partial)}\cong \C[\partial]^n,\ \ f_1(\partial)v_1+\cdots+f_n(\partial)v_n\mapsto \begin{pmatrix}f_1(\partial)\\
\vdots\\
f_n(\partial)
\end{pmatrix}.$$
Then $V_{a,b,C(\partial)}$ is a generalized conformal module over $Vir$ defined by
\begin{eqnarray*}
L_\lambda \begin{pmatrix}a_1\\
\vdots\\
a_n
\end{pmatrix}=(\partial+a \lambda+b)(e^{\lambda\mathfrak{L}}I_n)
\begin{pmatrix}a_1\\
\vdots\\
a_n
\end{pmatrix}\;\; \text{for any $\begin{pmatrix}a_1\\
\vdots\\
a_n
\end{pmatrix}\in\C^n$}.
\end{eqnarray*}
Denote $\mathcal{L}=\frac{d}{d\partial}+C(\partial)$, which is a linear operator on $\C[\partial]^{n}$. Viewing $\partial$ as an operator on $\C[\partial]^{n}$, we have
$\mathcal{L}^k\partial=\partial \mathcal{L}^k+k\mathcal{L}^{k-1}$ for each $k\in\Z_+$.
Then we can deduce that $e^{\lambda \mathcal{L}}\partial=(\partial+\lambda)e^{\lambda \mathcal{L}}$. Then by
$$(e^{\lambda\mathfrak{L}}I_n)
\begin{pmatrix}a_1\\
\vdots\\
a_n
\end{pmatrix}=e^{\lambda\mathcal{L}}
\begin{pmatrix}a_1\\
\vdots\\
a_n
\end{pmatrix}\;\; \text{for any $\begin{pmatrix}a_1\\
\vdots\\
a_n
\end{pmatrix}\in\C^n$,}
$$
we deduce that
\delete{$$L_\lambda\begin{pmatrix}f_1(\partial)\\
\vdots\\
f_n(\partial)
\end{pmatrix}=(\partial+a\lambda+b)e^{\lambda\mathfrak{L}}\begin{pmatrix}f_1(\partial)\\
\vdots\\
f_n(\partial)
\end{pmatrix},\ \forall
\begin{pmatrix}f_1(\partial)\\
\vdots\\
f_n(\partial)
\end{pmatrix}\in V_{C(\partial)}
.$$
or }
\begin{equation}\label{V(x)}
L_\lambda\mathbf{f}(\partial)
=(\partial+a\lambda+b)e^{\lambda\mathcal{L}}\mathbf{f}(\partial)
=\sum_{k=0}^\infty\frac{\lambda^k}{k!} ((\partial+b)\mathcal{L}^k+ka\mathcal{L}^{k-1})\mathbf{f}(\partial),
\end{equation}
where $\mathbf{f}(\partial)=
(f_1(\partial),\cdots,f_n(\partial))^T\in V_{a,b,C(\partial)}$.

\begin{rmk}When $n=1$, take $A(\partial,\lambda)=\partial+a\lambda+b$ and $B(\partial)=e^{q(\partial)}$. Then $B(\partial)^{-1}A(\partial,\lambda)B(\partial+\lambda)
=(\partial+a\lambda+b)e^{q(\partial+\lambda)-q(\partial)}$, which induces the rank one module $M_{q(\partial), a,b}$ in Theorem \ref{thm-2}. In this case, we have $C(\partial)=B(\partial)^{-1}B'(\partial)=q'(\partial)$.
Moreover, if we take $b=0$, we actually obtain a classification of $W_1$-modules which are
free and of rank one as a $\C[L_{-1}]$-module. Therefore, those $W_1$-modules in Theorem \ref{thm-3} can also be described in the following forms:
$$M_{a,0,q(x)}=\C[x],\ L_kf(x)=(x\mathcal{L}^{k+1}+(k+1) a\mathcal{L}^{k})f(x),$$
where $f(x)\in \C[x]$ and $\mathcal{L}=\frac{d}{dx}+q'(x)$ is an operator on $\C[x]$.
\end{rmk}

We view $V_{a, b, C(\partial)}$ as a module over the Lie algebra $\text{Lie}( Vir)^e$ and thus a module over the Lie algebra $W_1$.
To distinguish, we use $x$ instead of $\partial$ as the variable and denote this module by
$V=V_{a, b, C(x)}$ for short.
Then by Propositions \ref{pro-1} and \ref{pro-3}, we have
$$V=\C[x]^n,\ L_k\mathbf{f}(x)=((x+b)\mathcal{L}^{k+1}+(k+1) a\mathcal{L}^{k})\mathbf{f}(x),$$
where $\mathbf{f}(x)=
(f_1(x),\cdots,f_n(x))^T\in V$ and $\mathcal{L}=\frac{d}{dx}+C(x)$ is an operator on $\C[x]^n$.
\delete{
Realize $V=\C[x]^n$ and define
$$L=\frac{d}{dx}+C(x):\C[x]^n\rightarrow \C[x]^n,\ (d_1(x),\ldots, d_n(x))^T\mapsto (d_1'(x),\ldots, d_n'(x))^T+C(x)(d_1(x),\ldots, d_n(x))^T.$$ Then
$$L_i(d_1(x),\ldots, d_n(x))^T=((x+b)L^{i+1}+(i+1)aL^i)(d_1(x),\ldots, d_n(x))^T, i\geq -1.$$}

Let $K=\C(x)$ be the fraction field of $\C[x]$ and $V_K=\C(x)\otimes_{\C[x]} V\cong \C(x)^n$. Since $\frac{d}{dx}$ extends uniquely to be a derivation of $\C(x)$, then
$\mathcal{L}$ and $L_k, k\geq -1$ extend to be acting on $V_K$ consequently.
\begin{lem}\label{lem-4.3}
For any $k\in\Z_+$, $v\in V$ and $f(x)\in \C(x)$, we have
$\mathcal{L}^kf(x)v=\sum_{i=0}^k \binom{k}{i}f^{(i)}(x)\mathcal{L}^{k-i}v$.
\end{lem}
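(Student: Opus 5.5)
We argue by induction on $k$, using only that $\mathcal{L}=\frac{d}{dx}+C(x)$ obeys a Leibniz rule for multiplication by scalar functions. The base step is the case $k=1$. For $v\in V_K$ and $f(x)\in\C(x)$,
$$\mathcal{L}(f(x)v)=\frac{d}{dx}(f(x)v)+C(x)f(x)v=f'(x)v+f(x)\frac{dv}{dx}+f(x)C(x)v=f'(x)v+f(x)\mathcal{L}v.$$
This uses two facts. First, $\frac{d}{dx}$ acts componentwise on $\C(x)^n$ as a derivation. Second, the scalar $f(x)$ commutes with the matrix $C(x)$. Written as an operator identity, this says $[\mathcal{L},f]=f'$, where $f$ denotes multiplication by $f(x)$. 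The case $k=0$ is trivial.

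For the inductive step, assume the formula holds for $k$ and for all $v\in V_K$; working in $V_K$ rather than $V$ keeps everything closed under $\mathcal{L}$. Applying $\mathcal{L}$ to $\sum_{i=0}^k\binom{k}{i}f^{(i)}(x)\mathcal{L}^{k-i}v$ and using the $k=1$ case on each term, with $f^{(i)}$ in place of $f$ and $\mathcal{L}^{k-i}v$ in place of $v$, gives
$$\sum_{i=0}^k\binom{k}{i}\bigl(f^{(i+1)}(x)\mathcal{L}^{k-i}v+f^{(i)}(x)\mathcal{L}^{k+1-i}v\bigr).$$
We shift the index in the first sum and combine it with the second using Pascal's rule $\binom{k}{i-1}+\binom{k}{i}=\binom{k+1}{i}$. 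This yields $\sum_{i=0}^{k+1}\binom{k+1}{i}f^{(i)}(x)\mathcal{L}^{k+1-i}v$, which completes the induction.

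Alternatively, one can view this as the general identity $\mathrm{ad}$-expansion for an operator $\mathcal{L}$ whose commutator with multiplication operators acts as the derivation $\frac{d}{dx}$ on $\C(x)$. The only point needing care is the base step, specifically that $C(x)$ commutes with scalar multiplication. There is no real obstacle beyond the bookkeeping with binomial coefficients.
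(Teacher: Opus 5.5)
Your proof is correct and follows the paper's argument: first the Leibniz rule $\mathcal{L}(f(x)v)=f'(x)v+f(x)\mathcal{L}v$, then induction on $k$, applying that rule termwise and combining with Pascal's rule. Working in $V_K$ rather than $V$ is harmless, since $\mathcal{L}^{k-i}v\in V$ whenever $v\in V$, so nothing is missing.
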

\begin{proof}
Note that $\mathcal{L}f(x)v=f'(x)v+f(x)\mathcal{L}v$.
We prove the assertion by induction on $k$. The case for $k=0$ is obvious.
Assume $\mathcal{L}^kf(x)v=\sum_{i=0}^k \binom{k}{i}f^{(i)}(x)\mathcal{L}^{k-i}v$. Then we obtain $$\mathcal{L}^{k+1}f(x)v=\mathcal{L}\mathcal{L}^kf(x)v
=\sum_{i=0}^k \binom{k}{i}(f^{(i+1)}(x)\mathcal{L}^{k-i}+f^{(i)}(x)\mathcal{L}^{k+1-i})v
=\sum_{i=0}^{k+1} \binom{k+1}{i}f^{(i)}(x)\mathcal{L}^{k+1-i}v.$$
\end{proof}
\begin{lem}\label{lem-4.4-1}
For any $v\in V$ and $p\in \C(x)$, we have
\begin{align}
L_0(pv)&=pL_0v+(x+b)p' v,\label{eq-L0}\\
L_1(pv)&=pL_1v+(x+b)p'' v+2(x+b)p'\mathcal{L}v+2ap'v,\label{eq-L1}\\
L_2(pv)&=pL_2v+(x+b)p'''v+3(x+b)p''\mathcal{L}v+3ap''v+3(x+b)p'\mathcal{L}^2 v+6a p'\mathcal{L}v.\label{eq-L2}
\end{align}
\end{lem}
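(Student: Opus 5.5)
We start from the explicit formula for the $W_1$-action on $V$ (or on $V_K$), namely
\[
L_k w=\bigl((x+b)\mathcal{L}^{k+1}+(k+1)a\mathcal{L}^{k}\bigr)w,\qquad w\in V_K,\ k\ge -1 .
\]
Substituting $w=pv$ with $p\in\C(x)$, we expand $\mathcal{L}^{k+1}(pv)$ and $\mathcal{L}^{k}(pv)$ with Lemma \ref{lem-4.3}. This gives
\[
\mathcal{L}^{m}(pv)=\sum_{i=0}^{m}\binom{m}{i}p^{(i)}\mathcal{L}^{m-i}v .
\]
Multiplication by $x+b$ commutes with multiplication by $p$ and its derivatives. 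So
\[
L_k(pv)=\sum_{i=0}^{k+1}\binom{k+1}{i}(x+b)p^{(i)}\mathcal{L}^{k+1-i}v+(k+1)a\sum_{i=0}^{k}\binom{k}{i}p^{(i)}\mathcal{L}^{k-i}v .
\]

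We then group the terms by the order $i$ of the derivative of $p$. The $i=0$ terms recombine to $p\bigl((x+b)\mathcal{L}^{k+1}+(k+1)a\mathcal{L}^k\bigr)v=pL_kv$. The remaining terms are the correction.

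Next we specialize to $k=0,1,2$.

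For $k=0$, the correction is $(x+b)p'v$. This gives \eqref{eq-L0}.

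For $k=1$, the first sum contributes $2(x+b)p'\mathcal{L}v+(x+b)p''v$. The second sum, with coefficient $2a$, contributes $2ap'v$. This gives \eqref{eq-L1}.

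For $k=2$, the first sum contributes $3(x+b)p'\mathcal{L}^2v+3(x+b)p''\mathcal{L}v+(x+b)p'''v$. The second sum, with coefficient $3a$ and binomials $\binom{2}{1}=2$ and $\binom{2}{2}=1$, contributes $6ap'\mathcal{L}v+3ap''v$. This gives \eqref{eq-L2}.

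One technical point needs checking: the formula for $L_k$ and Lemma \ref{lem-4.3} must remain valid over $\C(x)$. This holds because $\frac{d}{dx}$ extends uniquely to a derivation of $\C(x)$. Then $\mathcal{L}$ satisfies the Leibniz rule $\mathcal{L}(pw)=p'w+p\mathcal{L}w$ on $V_K$, and the $L_k$ on $V_K$ are defined by the same operator formula. Lemma \ref{lem-4.3} is already stated for $f(x)\in\C(x)$, so there is no real obstacle here.

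The only place an error could occur is the bookkeeping of the binomial coefficients in the $a$-terms, i.e. the factor $(k+1)\binom{k}{i}$. We will verify it carefully for $k=2$.
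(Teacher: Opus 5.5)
Your proposal is correct and follows essentially the paper's approach. Both substitute $L_k=(x+b)\mathcal{L}^{k+1}+(k+1)a\mathcal{L}^{k}$ and expand each power of $\mathcal{L}$ applied to $pv$ with Lemma \ref{lem-4.3}, and your coefficient bookkeeping for $k=0,1,2$ gives exactly the stated formulas.
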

\begin{proof}Note that
$L_i=(x+b)\mathcal{L}^{i+1}+(i+1)a\mathcal{L}^i$ for any $i\geq -1$. Then the assertion follows from Lemma \ref{lem-4.3} directly.
\end{proof}
\delete{
Setting $p(x)=(x+b)^m$ in
Eq. (\ref{eq-L1}), we have
\begin{equation*}
L_1\bigl((x+b)^m v\bigr) = (x+b)^m L_1 v + 2m (x+b)^m L v + m(m-1+2a)(x+b)^{m-1} v,
\end{equation*} and
\begin{equation}
\aligned
L_2\bigl((x+b)^m v\bigr)&=(x+b)^m L_2 v+3m (x+b)^m L^2 v
+ 3m(m-1+2a)(x+b)^{m-1} L v\\
&\quad + m(m-1)(m-2+3a)(x+b)^{m-2} v
\endaligned
\end{equation}
Then
\begin{equation}
\aligned
L_2\bigl((x+b)^{m+1} v\bigr)&=(x+b)^{m+1} L_2 v+3(m+1) (x+b)^{m+1} L^2 v
+ 3(m+1)(m+2a)(x+b)^{m} L v\\
&\quad + (m+1)m(m-1+3a)(x+b)^{m-1} v
\endaligned
\end{equation}}

\begin{lem}\label{lem-4.4}
If
$X$ is a $W_1$-submodule of $V$, then $X_K$ is an $\mathcal{L}$-invariant subspace of $V_K$.
\end{lem}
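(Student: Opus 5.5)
The plan is to use the two lowest-degree operators $L_{-1}$ and $L_0$; this is enough, and no hypothesis on $a$ is needed. Recall that on $V=\C[x]^n$ we have $L_k=(x+b)\mathcal{L}^{k+1}+(k+1)a\mathcal{L}^{k}$ for $k\geq -1$.

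First consider $k=-1$. The term $(k+1)a\mathcal{L}^{k}$ vanishes, so $L_{-1}$ acts as multiplication by $x+b$. A $W_1$-submodule $X$ is stable under $L_{-1}$, hence under multiplication by every polynomial in $x$. So $X$ is a $\C[x]$-submodule of $V$. Consequently $X_K=\C(x)\otimes_{\C[x]}X$ is naturally identified with the $\C(x)$-span of $X$ inside $V_K\cong\C(x)^n$, and every element of $X_K$ is a finite sum $\sum_i p_i v_i$ with $p_i\in\C(x)$ and $v_i\in X$.

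Next consider $k=0$, where $L_0=(x+b)\mathcal{L}+a$. For $v\in X$ we have $L_0v\in X$ and $av\in X$, so $(x+b)\mathcal{L}v=L_0v-av\in X$. Since $x+b$ is invertible in $\C(x)$, this gives
\[
\mathcal{L}v=(x+b)^{-1}(L_0v-av)\in X_K .
\]

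Finally, extend from $X$ to all of $X_K$. By Lemma \ref{lem-4.3} with $k=1$, for $p\in\C(x)$ and $v\in X$ we get $\mathcal{L}(pv)=p'v+p\,\mathcal{L}v$. Here $p'v\in X_K$ directly, and $p\,\mathcal{L}v\in X_K$ by the previous step. Since $\mathcal{L}$ is additive, $\mathcal{L}(X_K)\subseteq X_K$, so $X_K$ is $\mathcal{L}$-invariant.

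The only point needing care is the identification of $X_K$ with the $\C(x)$-span of $X$ in $V_K$. This follows because $\C(x)$ is a localization of $\C[x]$, hence flat, so $X_K\to V_K$ is injective. Granting that, the argument is immediate.
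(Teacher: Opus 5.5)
Your proof is correct and follows the paper's argument: both apply the Leibniz rule $\mathcal{L}(pv)=p'v+p\,\mathcal{L}v$ from Lemma \ref{lem-4.3} together with $\mathcal{L}v=(x+b)^{-1}(L_0-a)v$ for $v\in X$. You also make explicit two points the paper leaves implicit: $X$ is a $\C[x]$-submodule because $L_{-1}$ acts as $x+b$, and $X_K$ embeds in $V_K$ by flatness of $\C(x)$ over $\C[x]$.
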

\begin{proof}
For any $f(x)\in \C(x), v\in X$, by Lemma \ref{lem-4.3}, we have
$$\mathcal{L}(f(x)v)=f'(x)v+f(x)\mathcal{L}v=f'(x)v+f(x)(x+b)^{-1}(L_0-a)v\in X_K.$$
Then the assertion follows.
\end{proof}
\begin{rmk}
Since $L_i=(x+b)\mathcal{L}^{i+1}+(i+1)a\mathcal{L}^i$, by Lemma \ref{lem-4.4}, we have $L_i(X_K)\subseteq X_K$ for any $i\geq -1$.
\end{rmk}
\begin{pro}\label{thm-4.1}
If $a\neq 0$,
then $V$ is an irreducible module over $W_1$ if and only if 
$\mathcal{L}=\frac{d}{dx}+C(x)$ acts irreducibly on $V_K$.
\end{pro}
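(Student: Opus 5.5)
The plan is to prove the two implications separately. Throughout, we use that $L_i=(x+b)\mathcal{L}^{i+1}+(i+1)a\mathcal{L}^i$ on $V=\C[x]^n$, so that $L_{-1}$ is multiplication by $x+b$ and $L_0=(x+b)\mathcal{L}+a$.

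\emph{($\Rightarrow$).} Argue by contraposition. Suppose $U\subseteq V_K$ is a nonzero proper $\mathcal{L}$-invariant $K$-subspace, and put $X=U\cap V$. Each $L_i$ is a polynomial in $\mathcal{L}$ with coefficients in $\C[x]$, so $L_i$ preserves $U$. It also preserves $V$, hence $X$ is a $W_1$-submodule of $V$. Clearing denominators gives $X\neq 0$ and $X_K=U$. Since $U\neq V_K$, we get $X\neq V$, so $V$ is reducible. This direction does not use $a\neq 0$.

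\emph{($\Leftarrow$).} Let $X\neq 0$ be a $W_1$-submodule of $V$. By Lemma \ref{lem-4.4}, $X_K$ is a nonzero $\mathcal{L}$-invariant subspace, so $X_K=V_K$. Hence $X$ has rank $n$ and $Q=V/X$ is a finitely generated torsion $\C[x]$-module, i.e.\ finite-dimensional. It remains to show $Q=0$.

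Suppose $Q\neq 0$. Via Corollary \ref{cor-4} and Proposition \ref{pro-3}, $Q$ is a finite generalized conformal module over $Vir$ on which $\partial$ acts as multiplication by $x$. Choose a $W_1$-submodule $X'\supseteq X$ of $V$ maximal with $X'\neq V$, so that $V/X'$ is irreducible and finite. Then $\text{Tor}(V/X')=V/X'\neq 0$, and Lemma \ref{lem-tor4} gives $V/X'\cong\C_\alpha$: all $L_k$ act by zero and $x$ acts by $\alpha$. Since $L_{-1}=x+b$ acts by zero, $\alpha=-b$. So there is $v\in V\setminus X'$ with $(x+b)v\in X'$ and $L_kv\in X'$ for all $k\ge -1$. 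Moreover $(x+b)V\not\subseteq X'$ would be harmless, since in any case $X'$ has codimension one and contains $(x+b)v$.

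The main obstacle is to derive a contradiction from this using $a\neq 0$. First, the case $a=0$ shows a hypothesis is genuinely needed: there $(x+b)V$ is exactly such an $X'$ (Proposition \ref{pro-irr1}). For $a\neq 0$, I would first try to prove $(x+b)V\subseteq X'$. Then $L_0=(x+b)\mathcal{L}+a$ would act on $V/X'$ as the scalar $a\neq 0$, contradicting the fact that $L_0$ acts by zero on $\C_{-b}$.

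To get $(x+b)V\subseteq X'$, I plan to use that $X'_K=V_K$ together with the commutation formulas of Lemma \ref{lem-4.4-1} for $p=(x+b)^m$. These express $L_i\bigl((x+b)^m w\bigr)$ modulo $(x+b)^mX'$ in terms of $(x+b)^{m-1}w$ and $\mathcal{L}w$, with coefficients like $m(m-1+2a)$ for $L_1$ and $m(m-1)(m-2+3a)$ for $L_2$. Take $w\in V$ and the minimal $m$ with $(x+b)^mw\in X'$. Applying $L_0$ and $L_1$ to $(x+b)^mw$ and eliminating the $\mathcal{L}w$ terms should leave a nonzero multiple of $(x+b)^{m-1}w$ lying in $X'$. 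This contradicts minimality unless $m\le 1$. If the coefficients turn out to vanish for some $m$, I would bring in $L_2$ as well, whose coefficients have different zeros. The whole argument rests on checking that this elimination produces a coefficient that is nonzero when $a\neq 0$; that computation is the step I expect to require the most care.
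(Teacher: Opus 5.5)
\emph{Your proof is correct and complete in substance, by a different route from the paper's; the only problem is that the step you leave as the ``main obstacle'' is already settled by what you wrote.}

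Your ($\Rightarrow$) direction is the paper's argument. For ($\Leftarrow$), your route is sound: pass to a maximal proper submodule $X'\supseteq X$ and apply Lemma \ref{lem-tor4}. You have already recorded that every $L_k$ acts by zero on $V/X'\cong\mathbb{C}_{-b}$. In particular $L_{-1}=x+b$ acts by zero, and that \emph{is} the statement $(x+b)V\subseteq X'$. Equivalently, $X'$ has codimension one, $v\notin X'$, $(x+b)v\in X'$, and $X'$ is stable under multiplication by $x+b$. So the case $(x+b)V\not\subseteq X'$, which you call ``harmless'', cannot occur. Hence for every $w\in V$, using $\mathcal{L}w\in V$,
\[
0\equiv L_0w=(x+b)\mathcal{L}w+aw\equiv aw \pmod{X'} .
\]
Thus $aV\subseteq X'$, and $a\neq 0$ forces $X'=V$, a contradiction. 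This finishes the proof using only $L_{-1}$ and $L_0$.

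The elimination you sketch is therefore unnecessary, and as formulated it would not work. With $m$ minimal for a single vector $w$, the expansion of $L_1\bigl((x+b)^m w\bigr)$ from Lemma \ref{lem-4.4-1} isolates $m(m-1+2a)(x+b)^{m-1}w$ only if $(x+b)^mL_1w$ and $(x+b)^m\mathcal{L}w$ already lie in $X'$. That needs the uniform hypothesis $(x+b)^mV\subseteq X'$, not just $(x+b)^mw\in X'$.

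The paper works directly on $X$ and does not invoke Section 2.
\begin{itemize}
\item It takes the monic $p$ of least degree with $pV\subseteq X$.
\item From $L_0(pv)=pL_0v+(x+b)p'v$ it gets $p\mid (x+b)p'$, hence $p=(x+b)^m$.
\item It applies $L_1$ to $(x+b)^mv$ and $L_2$ to $(x+b)^{m+1}v$. The leftover coefficients $m(m-1+2a)$ and $(m+1)m(m-1+3a)$ cannot both vanish when $a\neq 0$ and $m\ge 1$, so $m=0$.
\end{itemize}
This is precisely the uniform version of the computation you had in mind.

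Your route replaces that calculation with the structural fact that a finite irreducible torsion generalized conformal module is one-dimensional and trivial, which rests on the Noetherian argument of Lemma \ref{lem-tor3}. It is shorter and makes the role of $a\neq 0$ transparent: modulo $(x+b)V$, $L_0$ acts as the scalar $a$. That is exactly why $(x+b)V$ is a proper submodule when $a=0$ (Proposition \ref{pro-irr1}). The paper's argument is instead self-contained, staying within the explicit formulas for the $L_i$.
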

\begin{proof} Assume that $V=V_{a,b,C(x)}$ is an irreducible module over $W_1$.
Let $U$ be a nonzero $K$-subspace of $V_K$ which is invariant under the action of $\mathcal{L}$. Note that $\mathcal{L}V\subseteq V$.
Then $U\cap V$ is a $\C[x]$-submodule and invariant under the action of $\mathcal{L}$.
Since $L_i=(x+b)\mathcal{L}^{i+1}+(i+1)a\mathcal{L}^i$, $U\cap V$ is a $W_1$-submodule. Since $U\neq 0$, we have $U\cap V\neq 0$. Consequently, we obtain $U\cap V=V$. Thus we have $U=V_K$ and $\mathcal{L}$ acts irreducibly on $V_K$.

Conversely, assume that $\mathcal{L}$ acts irreducibly on $V_K$ and $X$ is a nonzero $W_1$-submodule of $V$. Then $X_K$ is a nonzero subspace of $V_K$.
If $X_K=V_K$, noting that $V$ is of finite rank over $\C[x]$, then there exists a nonzero polynomial $f(x)\in\C[x]$ such that $f(x)V\subseteq X$.
Assume that $p(x)$ is such a monic polynomial with the lowest degree.
For any $v\in V$, by Eq. (\ref{eq-L0}), we have $L_0(p(x)v)=(x+b)p'(x)v+p(x)L_0v$, which implies $(x+b)p'(x)v\in X$. Then we obtain $p(x)|(x+b)p'(x)$, i.e., $p(x)=(x+b)^m$ for some $m\in\Z_+$. Consequently, we have $(x+b)^mV\subseteq X$. If $m\geq1$, for any $v\in V$,
by setting $p(x)=(x+b)^m$ in
Eq. (\ref{eq-L1}), we have
\begin{equation*}
L_1\bigl((x+b)^m v\bigr) = (x+b)^m L_1 v + 2m (x+b)^m\mathcal{L}v + m(m-1+2a)(x+b)^{m-1} v.
\end{equation*}
By setting $p(x)=(x+b)^{m+1}$ in
Eq. (\ref{eq-L2}), we have
\begin{equation*}
\aligned
L_2\bigl((x+b)^{m+1} v\bigr)&=(x+b)^{m+1} L_2 v+3(m+1) (x+b)^{m+1}\mathcal{L}^2 v
+ 3(m+1)(m+2a)(x+b)^{m} \mathcal{L} v\\
&\quad + (m+1)m(m-1+3a)(x+b)^{m-1} v.
\endaligned
\end{equation*}
Since $a\neq 0$, we have $m-1+2a\neq0$ or $m-1+3a\neq 0$. Thus we deduce $(x+b)^{m-1}V\subseteq X$, which contradicts the choice of $p(x)$. Hence, we obtain $m=0$ and
$X=V$. Then this assertion follows.
\end{proof}
\begin{rmk}\label{rmk-4.2}
Note that the operator $\mathcal{L}$ is independent of the parameters $a$ and $b$. Therefore,  by Theorem \ref{thm-4.1}, the irreducibility of $V_{a,b,C(\partial)}$
is also independent of $a$ and $b$, provided that $a\neq 0$. It is an interesting phenomenon.
\end{rmk}
\begin{pro}\label{pro-4.1}
The operator $\mathcal{L}=\frac{d}{dx}+C(x)$ acts irreducibly on $V_K$ if and only if any nonzero $W_1$-submodule of $V$ is of rank $n$ as a $\C[x]$-module.
\end{pro}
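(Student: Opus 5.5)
The plan is to prove both implications by moving between $W_1$-submodules $X\subseteq V$ and $\mathcal{L}$-invariant $K$-subspaces of $V_K$. Two facts drive the argument. First, Lemma \ref{lem-4.4}: if $X$ is a $W_1$-submodule, then $X_K$ is $\mathcal{L}$-invariant. Second, the identity $L_i=(x+b)\mathcal{L}^{i+1}+(i+1)a\mathcal{L}^i$. Throughout I use the standard fact that a $\C[x]$-submodule $X$ of the free module $V=\C[x]^n$ is free, and its rank equals $\dim_K X_K$, where $X_K=K X\subseteq V_K$. This holds because $\C[x]$ is a PID and localization is exact. No condition on $a$ is needed.

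\emph{($\Rightarrow$)} Suppose $\mathcal{L}$ acts irreducibly on $V_K$, and let $X$ be a nonzero $W_1$-submodule of $V$. Then $X_K$ is a nonzero $K$-subspace of $V_K$, and it is $\mathcal{L}$-invariant by Lemma \ref{lem-4.4}. Irreducibility forces $X_K=V_K$. Hence $\operatorname{rank}_{\C[x]}X=\dim_K X_K=n$.

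\emph{($\Leftarrow$)} I argue by contraposition. Suppose $U$ is a nonzero proper $\mathcal{L}$-invariant $K$-subspace of $V_K$, and set $X=U\cap V$. The steps are as follows.
\begin{enumerate}
\item $X$ is a $\C[x]$-submodule of $V$.
\item $X$ is $\mathcal{L}$-stable, because $\mathcal{L}V\subseteq V$ (the matrix $C(x)$ is polynomial) and $\mathcal{L}U\subseteq U$.
\item $X$ is a $W_1$-submodule. Each $L_i$ is a polynomial in $\mathcal{L}$ whose coefficients lie in $\C[x]$, so every $L_i$ preserves $X$.
\item $X_K=U$. For any $u\in U$, clearing denominators gives a nonzero $g(x)\in\C[x]$ with $g(x)u\in V\cap U=X$. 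Hence $U\subseteq X_K\subseteq U$.
\end{enumerate}
Since $U\neq 0$, the last step shows $X\neq 0$, and it gives $\operatorname{rank}X=\dim_K U<n$. So $X$ is a nonzero $W_1$-submodule of rank less than $n$, which contradicts the hypothesis.

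There is no real obstacle in this argument. The only points needing care are that $X=U\cap V$ is genuinely $W_1$-stable, which uses that $\mathcal{L}$ preserves $V$ and not merely $V_K$, and the identification of the rank of $X$ with $\dim_K X_K$.
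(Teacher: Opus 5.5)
Your proposal is correct and follows the paper's proof essentially step for step. Lemma~\ref{lem-4.4} gives the forward direction, and for the converse both you and the paper take $U\cap V$, observe that it is a $W_1$-submodule because $\mathcal{L}V\subseteq V$ and $L_i=(x+b)\mathcal{L}^{i+1}+(i+1)a\mathcal{L}^i$, and then compare ranks. The only difference is cosmetic: you argue by contraposition and prove $(U\cap V)_K=U$ by clearing denominators, which also justifies $U\cap V\neq 0$, a point the paper leaves implicit; the paper instead uses only $(U\cap V)_K\subseteq U$ together with the rank-$n$ hypothesis to force $U=V_K$.
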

\begin{proof}
If $X$ is a nonzero $W_1$-submodule of $V$, then $X_K$ is an $\mathcal{L}$-invariant subspace of $V_K$ by Lemma \ref{lem-4.4}.
Since $\mathcal{L}$ acts irreducibly on $V_K$, then $X_K=V_K$.
Thus $rank_{\mathbb{C}[x]}X=dim_{\mathbb{C}(x)}X_K=n$.

Conversely, if $U$ is a nonzero subspace of $V_K$ which is invariant under the action of $\mathcal{L}$, then $U\cap V$ is a nonzero
$W_1$-submodule. Then we have $rank_{\C[x]}(U\cap V)=n$.
Since $(U\cap V)_K\subseteq U$ and $dim_{\mathbb{C}(x)}(U\cap V)_K=rank_{\C[x]}(U\cap V)=n$, we obtain $dim_{\C(x)}U=n$ and $U=V_K$.
\end{proof}
We use the language of linear system of differential equations to get an criterion on whether $V_{a,b,C(x)}$ is irreducible.
\begin{pro}\label{pro-4.2-1}
The operator $\mathcal{L}=\frac{d}{dx}+C(x)$ acts irreducibly on $V_K$ if and only if for any $1\leq r<n$ and $B(x)\in K^{r\times r}$,
$Y'(x)+C(x)Y(x)=Y(x)B(x)$ has no nonzero solutions, where $Y(x)\in K^{n\times r}$.
\end{pro}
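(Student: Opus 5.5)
The plan is to translate $\mathcal{L}$-invariant $K$-subspaces of $V_K=K^n$ into matrix solutions of the stated differential equation. Throughout, $\mathcal{L}=\frac{d}{dx}+C(x)$ acts on $K^n$; it is additive and satisfies the Leibniz rule $\mathcal{L}(fv)=f'v+f\mathcal{L}v$ for $f\in K$, $v\in K^n$. A $K$-subspace is in particular required to be closed under multiplication by $K$.

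First, suppose $U\subseteq V_K$ is an $\mathcal{L}$-invariant $K$-subspace with $0\neq U\neq V_K$, and set $r=\dim_K U$, so $1\le r<n$.
\begin{itemize}
\item Choose a $K$-basis $y_1,\dots,y_r$ of $U$ and let $Y(x)=(y_1,\dots,y_r)\in K^{n\times r}$. This $Y$ is nonzero; in fact it has rank $r$.
\item Invariance gives $\mathcal{L}y_j=y_j'+C(x)y_j\in U$ for each $j$.
\item Hence we can write $\mathcal{L}y_j=\sum_i b_{ij}(x)y_i$ with $b_{ij}\in K$. Setting $B=(b_{ij})\in K^{r\times r}$, this reads $Y'+CY=YB$.
\end{itemize}
So $Y$ is a nonzero solution with $r<n$. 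Contrapositively, if no such nonzero solutions exist for any $1\le r<n$, then $\mathcal{L}$ acts irreducibly.

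For the converse, suppose $1\le r<n$, $B\in K^{r\times r}$, and $0\neq Y\in K^{n\times r}$ satisfy $Y'+CY=YB$. Let $U$ be the $K$-span of the columns $y_1,\dots,y_r$ of $Y$. Then $0\neq U$ and $\dim_K U\le r<n$, so $U$ is a proper nonzero subspace.

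We check that $U$ is $\mathcal{L}$-invariant. The matrix equation says, column by column, that $\mathcal{L}y_j=\sum_i b_{ij}y_i\in U$. For a general element $\sum f_jy_j$ of $U$, the Leibniz rule gives
\[
\mathcal{L}\Bigl(\sum_j f_jy_j\Bigr)=\sum_j f_j'y_j+\sum_j f_j\,\mathcal{L}y_j\in U.
\]
Thus $U$ is a proper nonzero invariant subspace, and $\mathcal{L}$ is not irreducible.

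The one subtle point is that the columns of $Y$ need not be linearly independent, so $\dim U$ may be strictly less than $r$. This is harmless, since we only need $0<\dim U<n$. No earlier results beyond the Leibniz rule (Lemma \ref{lem-4.3} with $k=1$) are needed.
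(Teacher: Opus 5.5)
Your proposal is correct and follows essentially the same route as the paper: a $K$-basis of a proper nonzero $\mathcal{L}$-invariant subspace gives a solution $Y$ with $Y'+CY=YB$, and conversely the column span of a nonzero solution is a nonzero invariant subspace of dimension at most $r<n$, with invariance checked by the Leibniz rule. Your remark that the columns of $Y$ need not be independent, so only $\dim_K U\le r<n$ is used, states explicitly a point the paper passes over silently.
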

\begin{proof}
\delete{A proper subspace $U$ of $V_K$ has a $K$-basis $u_1,\cdots,u_r$, $1\leq r<n$.
Let $Y(x)=(v_1,\cdots,v_r)\in K^{n\times r}$.\yy{Let $Y(x)=(u_1,\cdots,u_r)\in K^{n\times r}$?}
Then $\mathcal{L}U\subseteq U$ if and only if $Y'(x)+C(x)Y(x)=Y(x)B(x)$ for some $B(x)\in K^{r\times r}$. The conclusion follows.
\yy{I rewrite this proof as follows.}}
Suppose that $\mathcal{L}$  acts irreducibly on $V_K$. Assume that there exist $1\leq r<n$, $B(x)\in K^{r\times r}$ and some nonzero $Y(x)\in K^{n\times r}$ such that $Y'(x)+C(x)Y(x)=Y(x)B(x)$. Let $U$ be the $K$-subspace spanned by the $r$ column vectors of $Y(x)$. Since $Y(x)\neq 0$, we have $U\neq 0$. Because $1\leq r<n$, $U$ is a proper subspace of $V_K$. For any $v \in U$, there exists $a(x) \in K^r$ such that $v = Y(x)a(x)$. Then we obtain
\begin{eqnarray*}
\mathcal{L}(v) &=& (Y(x)a(x))' + C(x)(Y(x)a(x)) = Y(x)'a(x) + Y(x)a'(x) + C(x) Y(x)a(x)\\
&=&(Y'(x) + C(x)Y(x))a(x) + Y(x)a'(x) = (Y(x)B(x))a(x) + Y(x)a'(x)\\
& =& Y(x)(B(x)a(x) + a'(x)) \in U.
\end{eqnarray*}
Thus $U$ is a nonzero proper $\mathcal{L}$-invariant subspace, a contradiction. Hence no such nonzero solution exists.

Conversely, suppose that the equation has no nonzero solution for any $1 \le r < n$ and any $B(x)$. Assume that there exists a nonzero proper $K$-subspace $U \subsetneq V_K$ which is invariant under the action of $\mathcal{L}$. Let $\dim_K U = r$. So we have $1 \le r < n$. Choose a $K$-basis $u_1,\dots,u_r$ of $U$ and let $Y(x) = (u_1,\dots,u_r) \in K^{n \times r}$. Note that $Y(x)\neq 0$. Since $U$ is $\mathcal L$-invariant, for each $j$, $\mathcal{L}(u_j) \in U$. Hence there exists a unique matrix $B = (b_{ij}(x)) \in K^{r \times r}$ such that $\mathcal{L}(u_j) = \sum_{i=1}^r b_{ij}(x) u_i$ which is equivalent to the matrix identity $\mathcal{L}(Y(x)) = Y(x)B(x)$, i.e., $Y'(x) + C(x)Y(x) = Y(x)B(x)$.
This gives a nonzero solution, contradicting the assumption. Therefore, $\mathcal{L}$  acts irreducibly on $V_K$.

\end{proof}

\begin{thm}\label{thm-c}Let $a,b\in\C$ and $C(\partial)\in\C[\partial]^{n\times n}$.
The generalized conformal module
$V_{a,b,C(\partial)}$ over $Vir$ is irreducible if and only if $a\neq 0$ and
$\frac{d}{dx}+C(\partial)$ acts irreducibly on $\C(\partial)^n$. Moreover, the following statements are equivalent:
\begin{enumerate}
\item $\frac{d}{dx}+C(\partial)$ acts irreducibly on $\C(\partial)^n$;
\item For any $1\leq r<n$ and $B(x)\in K^{r\times r}$,
$Y'(x)+C(x)Y(x)=Y(x)B(x)$ has no nonzero solutions, where $Y(x)\in K^{n\times r}$;
\item Any nonzero $W_1$-submodule of $V_{a,b,C(\partial)}$ is of rank $n$ as a $\C[x]$-module.
\end{enumerate}
\end{thm}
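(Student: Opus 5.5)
The theorem is essentially an assembly of results already proved in this section, and the plan is to chain them through the dictionary between generalized conformal modules over $Vir$ and $W_1$-modules. First, by Proposition \ref{pro-3}(\ref{a3}), $V_{a,b,C(\partial)}$ is irreducible as a generalized conformal module over $Vir$ if and only if it is irreducible as a $\text{Lie}(Vir)^e$-module. On $V_{a,b,C(\partial)}$ the central element $\partial-L_{(0)}$ acts as a scalar. Indeed, from Eq.~(\ref{V(x)}) the $\lambda^0$-coefficient gives $L_{(0)}=\partial+b$, so $\partial-L_{(0)}=-b$. Corollary \ref{cor-4}(d) then reduces irreducibility over $\text{Lie}(Vir)^e$ to irreducibility over $W_1$, with the realization $V=\C[x]^n$ and $L_k=(x+b)\mathcal{L}^{k+1}+(k+1)a\mathcal{L}^k$.

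Next we split on $a$. If $a=0$, Proposition \ref{pro-irr1} shows $V_{0,b,C(\partial)}$ is reducible. This gives the necessity of $a\neq0$; in this case both sides of the claimed equivalence fail. If $a\neq 0$, Proposition \ref{thm-4.1} states exactly that $V$ is irreducible over $W_1$ if and only if $\mathcal{L}=\frac{d}{dx}+C(x)$ acts irreducibly on $V_K=\C(x)^n$. Combining the two cases yields the first assertion.

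For the list of equivalent conditions, we identify $\partial$ with $x$ throughout. Proposition \ref{pro-4.2-1} gives (a)$\Leftrightarrow$(b), and Proposition \ref{pro-4.1} gives (a)$\Leftrightarrow$(c). One point needs checking: Proposition \ref{pro-4.1} and Lemma \ref{lem-4.4} were stated for $V=V_{a,b,C(x)}$ with arbitrary $a,b$. The proof of Lemma \ref{lem-4.4}, however, uses $(x+b)^{-1}(L_0-a)v=\mathcal{L}v$, which is valid for every $a$, including $a=0$. Hence (a)$\Leftrightarrow$(c) holds without the hypothesis $a\neq0$, as the theorem implicitly requires.

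The only real subtlety, and the step I would verify most carefully, is the bookkeeping in the first step. One must confirm that the $W_1$-module obtained from $V_{a,b,C(\partial)}$ via Propositions \ref{pro-1} and \ref{pro-3} has the stated $L_k$ formula, and that Corollary \ref{cor-4}(d) applies with $\xi=-b$. Both follow by reading off the coefficient of $\lambda^{k+1}/(k+1)!$ in Eq.~(\ref{V(x)}). Everything else is a direct citation of the preceding propositions.
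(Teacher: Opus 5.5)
Your proposal is correct and follows the paper's proof, which simply cites Propositions \ref{pro-irr1}, \ref{thm-4.1}, \ref{pro-4.1} and \ref{pro-4.2-1}. Your additional checks make explicit what the paper leaves implicit: that $\partial-L_{(0)}$ acts as $-b$, so Corollary \ref{cor-4}(d) applies, and that Lemma \ref{lem-4.4}, hence (a)$\Leftrightarrow$(c), also holds when $a=0$.
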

\begin{proof}
It follows directly by Propositions \ref{pro-irr1}, \ref{thm-4.1},  \ref{pro-4.1} and \ref{pro-4.2-1}.
\end{proof}

The following proposition characterizes the isomorphism classes of the modules $V_{a,b,C(\partial)}$.

\begin{pro}\label{iso-1}
$V_{a_1,b_1,C_1(\partial)}\cong V_{a_2,b_2,C_2(\partial)}$ if and only if $(a_1,b_1)=(a_2,b_2)$ and there exists $G(\partial)\in\C[\partial]^{n\times n}$ such that $\text{det}~G(\partial)\in\C\setminus \{0\}$ and $G'(\partial)=G(\partial)C_1(\partial)-C_2(\partial)G(\partial)$.
\end{pro}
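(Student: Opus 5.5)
The plan is to translate an isomorphism into a matrix condition using the realization $V_{a,b,C(\partial)}\cong\C[\partial]^n$ with $L_\lambda\mathbf f=(\partial+a\lambda+b)e^{\lambda\mathcal L}\mathbf f$ from Eq.~(\ref{V(x)}). Write $\mathcal L_i=\frac{d}{d\partial}+C_i(\partial)$ for $i=1,2$.

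Any isomorphism $\varphi:V_{a_1,b_1,C_1(\partial)}\to V_{a_2,b_2,C_2(\partial)}$ is in particular $\C[\partial]$-linear. Hence it is multiplication by a matrix $G(\partial)\in\C[\partial]^{n\times n}$. Since $\varphi$ is bijective, $G(\partial)$ is invertible over $\C[\partial]$, which is equivalent to $\det G(\partial)\in\C\setminus\{0\}$. The intertwining condition reads
$$G(\partial)(\partial+a_1\lambda+b_1)e^{\lambda\mathcal L_1}\mathbf f=(\partial+a_2\lambda+b_2)e^{\lambda\mathcal L_2}G(\partial)\mathbf f\quad\text{for all }\mathbf f\in\C[\partial]^n.$$

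The key computation is the operator identity
$$\mathcal L_2\circ G=G\circ\mathcal L_1+D,\qquad D(\partial):=G'(\partial)-G(\partial)C_1(\partial)+C_2(\partial)G(\partial),$$
where $D$ acts as multiplication by a matrix. It follows directly from the Leibniz rule, as in Lemma~\ref{lem-4.3}. I will then compare coefficients of $\lambda^0$ and $\lambda^1$ in the intertwining condition.

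\emph{The $\lambda^0$ coefficient.} This gives $(\partial+b_1)G\mathbf f=(\partial+b_2)G\mathbf f$ for all $\mathbf f$. Since $G\neq0$, this forces $b_1=b_2=:b$.

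\emph{The $\lambda^1$ coefficient.} This gives
$$G\bigl((\partial+b)\mathcal L_1+a_1\bigr)\mathbf f=\bigl((\partial+b)\mathcal L_2+a_2\bigr)G\mathbf f .$$
Substituting the identity for $\mathcal L_2\circ G$ yields $(\partial+b)D\mathbf f=(a_1-a_2)G\mathbf f$ for all $\mathbf f$. Taking $\mathbf f$ to be the standard basis vectors gives the matrix equation $(\partial+b)D(\partial)=(a_1-a_2)G(\partial)$.

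\emph{Showing $a_1=a_2$.} This is the step I expect to be the only real obstacle, and the constant determinant resolves it. If $a_1\neq a_2$, then every entry of $G(\partial)$ is divisible by $\partial+b$. Hence $(\partial+b)^n$ divides $\det G(\partial)$, which contradicts $\det G(\partial)\in\C\setminus\{0\}$ (recall $n\geq1$). Therefore $a_1=a_2$. The matrix equation then gives $(\partial+b)D=0$, so $D=0$, which is exactly $G'=GC_1-C_2G$.

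For the converse, assume $(a_1,b_1)=(a_2,b_2)$ and that $G$ satisfies $G'=GC_1-C_2G$ with $\det G\in\C\setminus\{0\}$. Then $D=0$, so $\mathcal L_2\circ G=G\circ\mathcal L_1$. By induction $\mathcal L_2^k G=G\mathcal L_1^k$ for all $k$, and hence $e^{\lambda\mathcal L_2}G=Ge^{\lambda\mathcal L_1}$. Multiplication by $G$ also commutes with the scalar factor $\partial+a\lambda+b$. So $\mathbf f\mapsto G(\partial)\mathbf f$ intertwines the two $\lambda$-actions. It is a bijection because $G^{-1}\in\C[\partial]^{n\times n}$. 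This gives the required isomorphism.
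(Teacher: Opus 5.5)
Your proposal is correct and follows essentially the paper's proof. Your $\lambda^0$ and $\lambda^1$ coefficients are exactly the actions of $L_{-1}$ and $L_0$ that the paper compares. Your determinant-divisibility argument for $a_1=a_2$ is a minor variant of the paper's step, which multiplies $(\partial+b)D=(a_1-a_2)G$ by the polynomial inverse $G(\partial)^{-1}$.
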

\begin{proof}\delete{Recall that for any $a, b\in\C$ and $C(\partial)\in\C[\partial]^{n\times n}$, we have
\begin{equation*}
V_{a,b,C(\partial)}=\C[\partial]^n,\
L_\lambda\mathbf{f}(\partial)
=(\partial+a\lambda+b)e^{\lambda\mathcal{L}}\mathbf{f}(\partial)
=\sum_{k=0}^\infty\frac{\lambda^k}{k!} ((\partial+b)\mathcal{L}^k+ka\mathcal{L}^{k-1})\mathbf{f}(\partial),
\end{equation*}
where $\mathbf{f}(\partial)=
(f_1(\partial),\cdots,f_n(\partial))^T\in V_{a, b,C(\partial)}$.} Let $\mathcal{L}_i=\frac{d}{d\partial}+C_i(\partial)$ for $i=1$, $2$.
If $G'(\partial)=G(\partial)C_1(\partial)-C_2(\partial)G(\partial)$, then we obtain
$$\mathcal{L}_2G(\partial)=(\frac{d}{d\partial}+C_2(\partial))G(\partial)=G(\partial)\frac{d}{d\partial}+G'(\partial)+C_2(\partial)G(\partial)
=G(\partial)(\frac{d}{d\partial}+C_1(\partial))=G(\partial)\mathcal{L}_1.$$
For $G(\partial)\in\C[\partial]^{n\times n}$ with
$\text{det}~G(\partial)\in\C\setminus\{0\}$,
$\phi: V_{a,b,C_1(\partial)}\rightarrow V_{a,b,C_2(\partial)},\ \mathbf{f}(\partial)\mapsto G(\partial)\mathbf{f}(\partial)$ is an isomorphism of $\C[\partial]$-modules. Moreover, we have
$$\phi (L_\lambda\mathbf{f}(\partial))=
G(\partial)(\partial+a\lambda+b)e^{\lambda\mathcal{L}_1}\mathbf{f}(\partial)=
(\partial+a\lambda+b)e^{\lambda\mathcal{L}_2}G(\partial)\mathbf{f}(\partial)=
L_\lambda\phi(\mathbf{f}(\partial)).
$$ Therefore, we get $V_{a,b,C_1(\partial)}\cong V_{a,b,C_2(\partial)}$.

Conversely, if $V_{a_1,b_1,C_1(\partial)}\cong V_{a_2,b_2,C_2(\partial)}$,  assume that
$$\phi: V_{a_1,b_1,C_1(\partial)}\rightarrow V_{a_2,b_2,C_2(\partial)},\ \mathbf{f}(\partial)\mapsto G(\partial)\mathbf{f}(\partial),\ G(\partial)\in\C[\partial]^{n\times n},\ \text{det}~G(\partial)\in\C\setminus \{0\}$$ is an isomorphism. Viewing them as modules over the Lie algebra $\text{Lie}( Vir)^e\cong W_1\oplus \C(\partial-L_{-1})$, we have $\phi L_k=L_k \phi$ for any $k\geq -1$. Since $\phi L_{-1}=L_{-1}\phi$ and $L_{-1}$ acts by $\partial+b$ on $V_{a,b,C(\partial)}$, we deduce that $b_1=b_2$. Since $\phi L_0=L_0\phi$, we have
\begin{equation*}
\aligned
&(\partial+b) G(\partial)\frac{d}{d\partial}+ (\partial+b)G(\partial)C_1(\partial)+a_1G(\partial)
=G(\partial)((\partial+b)(\frac{d}{d\partial}+C_1(\partial))+a_1))\\
=&((\partial+b)(\frac{d}{d\partial}+C_2(\partial))+a_2))G(\partial)
=(\partial+b) G(\partial)\frac{d}{d\partial}+(\partial+b)(G'(\partial)+C_2(\partial)G(\partial))+a_2G(\partial),
\endaligned
\end{equation*}
which gives $(\partial+b) G(\partial)C_1(\partial)+a_1G(\partial)=(\partial+b)(G'(\partial)+C_2(\partial)G(\partial))+a_2G(\partial)$. Then we have $$(\partial+b)(G(\partial)C_1(\partial)G^{-1}(\partial)-(G'(\partial)+C_2(\partial)G(\partial))G(\partial)^{-1})=(a_2-a_1)I_n.$$
Since $G(\partial)^{-1}\in\C[\partial]^{n\times n}$, we deduce that $a_1=a_2$ and $G'(\partial)=G(\partial)C_1(\partial)-C_2(\partial)G(\partial)$.
\end{proof}

\begin{pro}Let $a,b\in\C$ and $C(\partial)\in\C[\partial]^{n\times n}$.
Then $V_{a,b,C(\partial)}$ is conformal if and only if $G'(\partial)+C(\partial)G(\partial)=0$ for some $G(\partial)\in\C[\partial]^{n\times n}$ with $\text{det}~G(\partial)\in\C\setminus\{0\}$.
\end{pro}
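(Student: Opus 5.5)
The plan is to reduce conformality to an isomorphism with the module whose connection matrix is zero, using Proposition \ref{iso-1}. The key observation is that when $C(\partial)=0$ we have $\mathcal{L}=\frac{d}{d\partial}$ and $e^{\lambda\mathcal{L}}\mathbf{f}(\partial)=\mathbf{f}(\partial+\lambda)$. Hence $V_{a,b,0}$ is the direct sum of $n$ copies of $M_{a,b}$, and it is conformal. Conversely, conformality is preserved under isomorphism, since an isomorphism is $\C[\partial]$-linear and commutes with $L_\lambda$.

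\emph{Sufficiency.} Suppose $G'(\partial)+C(\partial)G(\partial)=0$ with $\det G(\partial)\in\C\setminus\{0\}$. Then $G'(\partial)=G(\partial)\cdot 0-C(\partial)G(\partial)$. Proposition \ref{iso-1}, applied with $C_1=0$ and $C_2=C$, gives $V_{a,b,0}\cong V_{a,b,C(\partial)}$, so $V_{a,b,C(\partial)}$ is conformal.

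\emph{Necessity.} Suppose $V=V_{a,b,C(\partial)}$ is conformal. By Eq.~(\ref{V(x)}), the coefficient of $\lambda^k/k!$ in $L_\lambda \mathbf{e}_j$ is $((\partial+b)\mathcal{L}^k+ka\mathcal{L}^{k-1})\mathbf{e}_j$. Conformality means these vanish for all $k>N$ and all standard basis vectors $\mathbf{e}_j$. Since $V$ is generated by the $\mathbf{e}_j$ and $L_\lambda(g\mathbf{e}_j)=g(\partial+\lambda)L_\lambda\mathbf{e}_j$, a single $N$ works for all of $V$. Thus $(\partial+b)\mathcal{L}^{k}+ka\mathcal{L}^{k-1}=0$ on $V$ for all $k>N$.

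From this vanishing I want to conclude that $\mathcal{L}$ is locally nilpotent on $V$. Write $T_k=(\partial+b)\mathcal{L}^k+ka\mathcal{L}^{k-1}$. A direct computation gives
\[
\mathcal{L}T_k=T_{k+1}+(1-a)\mathcal{L}^k-ka\mathcal{L}^k .
\]
For $k>N$ both $T_k$ and $T_{k+1}$ vanish, so this yields $(1-(k+1)a)\mathcal{L}^k=0$ on $V$. If $a$ is of the form $1/m$, then $1-(k+1)a\neq 0$ for $k$ large, so in every case $\mathcal{L}^k=0$ on $V$ for large $k$. Now $V$ is a $\C[\partial]$-lattice and $\mathcal{L}$ is a connection on it. Since $\mathcal{L}$ is nilpotent, $\ker\mathcal{L}$ is nonzero, and it is a finite-dimensional $\C$-space because solutions of $y'=-Cy$ form a space of dimension at most $n$. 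The goal is to show that the polynomial horizontal sections span $V$, or at least $V_K$.

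Suppose $\mathcal{L}^k=0$ but $\mathcal{L}\neq 0$. Then $\mathcal{L}$ has a nonzero polynomial image. Using $\mathcal{L}(\partial v)=v+\partial\mathcal{L}v$, iterating shows $\mathcal{L}$ acts on $V$ like $d/d\partial$ on polynomials. Choose $v$ with $\mathcal{L}^{m}v=0\neq\mathcal{L}^{m-1}v$. Then $w=\mathcal{L}^{m-1}v$ satisfies $\mathcal{L}w=0$. Moreover $\mathcal{L}^{m}(\partial^{j}w)$ involves $\partial^{j-m}w$ with a nonzero binomial coefficient (Lemma \ref{lem-4.3}), which contradicts $\mathcal{L}^k=0$ on $\partial^k w$. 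This shows $\ker \mathcal L$ cannot be nonzero if $\mathcal L$ is nilpotent, so the argument must be reorganized. The correct reading is that $\mathcal{L}^k\neq 0$ always, and I must redo the computation of $\mathcal{L}T_k$ carefully.

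Carrying out that computation with the commutator $[\mathcal{L},\partial]=1$ gives
\[
\mathcal{L}T_k=(\partial+b)\mathcal{L}^{k+1}+\mathcal{L}^k+ka\mathcal{L}^k=T_{k+1}+(1-a)\mathcal{L}^k .
\]
Hence for $k>N$ we get $(1-a)\mathcal{L}^k=0$. If $a\neq 1$ the contradiction above shows that no conformal structure exists unless the $\lambda$-series is polynomial in some other way. So in the case $a\neq1$, and then separately for $a=1$, I will instead work directly with $K$-solutions, as follows.

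The cleaner route, which I will actually follow, is to extract $G$ from a conformal isomorphism with a direct sum of $M_{a,b}$. Tensor with $K$, and show that conformality forces a fundamental matrix $G^{-1}$ of $Y'=-CY$ with rational entries and constant nonzero determinant. The determinant satisfies $(\det Y)'=-\operatorname{tr}C\det Y$, and it must be rational; since $\operatorname{tr}C$ is a polynomial, this forces $\det Y$ to be constant and $\operatorname{tr}C=0$. Poles of $Y$ are ruled out because $C$ is polynomial, so by regularity the solutions are entire and rational, hence polynomial.

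\textbf{Main obstacle.} Producing $n$ independent rational solutions of $Y'=-CY$ from conformality is the hard step. I will attempt it through the exact $\mathcal{L}T_k$ identity, which expresses $\mathcal{L}^{N}$ as $(\partial+b)^{-1}$-multiples of lower powers, so that $e^{\lambda\mathcal{L}}$ is controlled on $V_K$.
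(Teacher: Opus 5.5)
Your sufficiency half is correct and coincides with the paper's: Proposition~\ref{iso-1} with $C_1=0$, $C_2=C$ gives $V_{a,b,0}\cong V_{a,b,C(\partial)}$, and $V_{a,b,0}$ is conformal.

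The necessity half, however, is not proved. The trouble starts with the claim that a single $N$ works for all of $V$. It does not. The series $L_\lambda(g\mathbf{e}_j)=g(\partial+\lambda)L_\lambda\mathbf{e}_j$ has $\lambda$-degree growing with $\deg g$; already for $C=0$, $L_\lambda(\partial^m\mathbf{e}_j)=(\partial+\lambda)^m(\partial+a\lambda+b)\mathbf{e}_j$. So conformality only gives $T_k\mathbf{e}_j=0$ for $k>N$, not $T_k=0$ on $V$. That false global vanishing is what produced your contradiction. Since $[\mathcal{L},\partial]=1$, $\mathcal{L}$ is never nilpotent on $V$. It is, however, locally nilpotent when $V$ is conformal, and that is all one needs.

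The correct statement is $(\partial+b)\mathfrak{L}^kI_n+ka\,\mathfrak{L}^{k-1}I_n=0$ for all $k\ge s$, where $\mathfrak{L}^kI_n\in\C[\partial]^{n\times n}$. Compare $\partial$-degrees. For $a\neq0$, the degree drops by exactly one at each step as long as the matrices are nonzero; for $a=0$ one gets $\mathfrak{L}^kI_n=0$ at once. Either way $\mathfrak{L}^mI_n=0$ for some $m$, for every $a$. Your corrected identity $\mathcal{L}T_k=T_{k+1}+(1-a)\mathcal{L}^k$, applied only to the $\mathbf{e}_j$, gives the same conclusion for $a\neq1$ but nothing for $a=1$.

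The decisive missing idea is how to pass from $\mathfrak{L}^mI_n=0$ to $G$. You explicitly leave this as the ``main obstacle'' and only sketch a route through rational fundamental matrices. That sketch also conflates $G$ with $G^{-1}$: it is $G$ itself that must satisfy $Y'=-CY$. The paper simply writes down
\[
G(\partial)=\sum_{i=0}^{m-1}(-1)^i\frac{(\partial+b)^i}{i!}\,\mathfrak{L}^iI_n .
\]
Since $\mathfrak{L}(pM)=p'M+p\,\mathfrak{L}M$ for scalar $p$, the sum $\mathfrak{L}G=G'+CG$ telescopes to $(-1)^{m-1}\frac{(\partial+b)^{m-1}}{(m-1)!}\mathfrak{L}^mI_n=0$. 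This is the standard projection onto $\ker\mathcal{L}$: a locally nilpotent $\mathcal{L}$ with $[\mathcal{L},\partial+b]=1$ gives $V\cong\C[\partial]\otimes\ker\mathcal{L}$, and the columns of $G$ are the projections of the $\mathbf{e}_j$. Then $G(-b)=I_n$ gives $\det G\neq0$. Liouville's formula $(\det G)'=-\mathrm{tr}\,C\cdot\det G$, together with a degree comparison, forces $\det G\in\C\setminus\{0\}$. Without this construction, or an equivalent argument producing $n$ polynomial horizontal sections that form a $\C[\partial]$-basis, your necessity direction remains a gap.
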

\begin{proof}
If $C(\partial)G(\partial)+G'(\partial)=0$ for some $G(\partial)\in\C[\partial]^{n\times n}$ with $\text{det}~G(\partial)\in\C\setminus\{0\}$, then $V_{a,b,C(\partial)}\cong V_{a,b,0}$ by Proposition \ref{iso-1}. Obviously, $V_{a,b,0}$ is conformal.

Conversely, if $V_{a,b,C(\partial)}$ is conformal, then by Eq. (\ref{V(x)}), we have
$\sum_{k=0}^\infty((\partial+b)(\mathfrak{L}^kI_n)+ka(\mathfrak{L}^{k-1}I_n))\frac{\lambda^k}{k!}$ $
\in(\C[\partial][\lambda])^{n\times n}$. Then there exists $ s\in\Z_+$ such that $(\partial+b)(\mathfrak{L}^kI_n)+ka(\mathfrak{L}^{k-1}I_n)=0$ for any $ k\geq s$.
\delete{
For arbitrary $k\geq s$, by $$\mathfrak{L}((\partial+b)(\mathfrak{L}^kI_n)+ka(\mathfrak{L}^{k-1}I_n))
=(\partial+b)(\mathfrak{L}^{k+1}I_n)+(ka+1)(\mathfrak{L}^{k}I_n)=0,$$ we deduce that $\mathfrak{L}^{k}I_n=0$ if $a\neq 1$. If $a=1$, then $(\partial+b)(\mathfrak{L}^kI_n)+k(\mathfrak{L}^{k-1}I_n)=0, k\geq s$.}
Note that $\mathfrak{L}^kI_n\in\C[\partial]^{n\times n}$.
By comparing the degree of $\partial$ in $(\partial+b)(\mathfrak{L}^kI_n)=-ka(\mathfrak{L}^{k-1}I_n)$, we deduce that
$\mathfrak{L}^kI_n=0$ for some $k\in\Z_+$. Assume that $\mathfrak{L}^mI_n=0$ while
$\mathfrak{L}^{m-1}I_n\neq0$.
Let $G(\partial)=\sum_{i=0}^{m-1}(-1)^i\frac{(\partial+b)^i}{i!}\mathfrak{L}^iI_n$. Then a direct computation gives
$$G'(\partial)+C(\partial)G(\partial)=\mathfrak{L}G(\partial)=
\mathfrak{L}\sum_{i=0}^{m-1}(-1)^i\frac{(\partial+b)^i}{i!}\mathfrak{L}^iI_n=0.$$
Since $G(-b)=I_n$, we have $\text{det}~G(\partial)\neq 0$. Then by Liouville's formula, we have $(\text{det}~G(\partial))'=-tr(C(\partial))~\text{det}~G(\partial)$. By comparing the degree of $\partial$ on both sides, we have $(\text{det}~G(\partial))'=0$. Therefore, $\text{det}~G(\partial)\in\C\setminus\{0\}$ and the assertion follows.
\end{proof}

For $n=2$, the criterion in Theorem \ref{thm-c} can be sharpened significantly: instead of solving a matrix equation, one only needs to check whether a certain differential equation has a rational solution.
\delete{
In this case, a proper $\mathcal{L}$-invariant subspace must be of dimension one. Denote $Y(x)=(u(x),v(x))^T\neq 0$ and
$C(x)=\begin{pmatrix}
c_{11}(x)&c_{12}(x)\\
c_{21}(x)&c_{22}(x)
\end{pmatrix}\in\C[x]^{n\times n}$. Consider the equation $$\mathcal{L}\begin{pmatrix}u(x)\\v(x)\end{pmatrix}=\begin{pmatrix}u'(x)\\v'(x)\end{pmatrix}
+C(x)\begin{pmatrix}u(x)\\v(x)\end{pmatrix}=
\alpha(x)\begin{pmatrix}u(x)\\v(x)\end{pmatrix}$$ with $\alpha(x)\in K$.
Then we have
$$\begin{cases}
u'(x)+c_{11}(x)u(x)+c_{12}(x)v(x)=\alpha(x)u(x)\\
v'(x)+c_{21}(x)u(x)+c_{22}(x)v(x)=\alpha(x)v(x)
\end{cases}.
$$
If $v(x)\neq 0$, denote $\gamma(x)=u(x)/v(x)$.
Then $\gamma'+(c_{11}-c_{22})\gamma-c_{21}\gamma^2+c_{12}=0$.
If $u(x)\neq 0$, denote $\delta(x)=v(x)/u(x)$.
Then $\delta'+(c_{22}-c_{11})\delta-c_{12}\delta^2+c_{21}=0$.

So we get the following result.
\begin{pro}Let $C(x)=\begin{pmatrix}
c_{11}(x)&c_{12}(x)\\
c_{21}(x)&c_{22}(x)
\end{pmatrix}\in\C[x]^{2\times 2}$ and $a\neq 0$. Then
$V_{a,b,C(x)}$ is irreducible if and only if $\gamma'+(c_{11}-c_{22})\gamma-c_{21}\gamma^2+c_{12}=0$ and $\delta'+(c_{22}-c_{11})\delta-c_{12}\delta^2+c_{21}=0$ do not have solutions in $\C(x)$.
\end{pro}
}
\delete{
\begin{thm}Let $a\in \C\setminus\{0\}$, $b\in \C$ and $C(x)=\begin{pmatrix}
c_{11}(x)&c_{12}(x)\\
c_{21}(x)&c_{22}(x)
\end{pmatrix}\in\C[x]^{2\times 2}$. Then
$V_{a,b,C(x)}$ is irreducible if and only if $c_{21}(x)\neq0$ and $\gamma'(x)+(c_{11}(x)-c_{22}(x))\gamma(x)-c_{21}(x)\gamma(x)^2+c_{12}(x)=0$ does not have solutions in $\C(x)$.
\end{thm}
\begin{proof}
Note that a proper $\mathcal{L}$-invariant subspace of $K^2$ must be of dimension one over $K$. Let $(u(x),v(x))^T\neq 0$. Consider the equation $$\mathcal{L}\begin{pmatrix}u(x)\\v(x)\end{pmatrix}=\begin{pmatrix}u'(x)\\v'(x)\end{pmatrix}
+C(x)\begin{pmatrix}u(x)\\v(x)\end{pmatrix}=
\alpha(x)\begin{pmatrix}u(x)\\v(x)\end{pmatrix}$$ with $\alpha(x)\in K$.
Then we have
$$\begin{cases}
u'(x)+c_{11}(x)u(x)+c_{12}(x)v(x)=\alpha(x)u(x)\\
v'(x)+c_{21}(x)u(x)+c_{22}(x)v(x)=\alpha(x)v(x)
\end{cases}.
$$
If $v(x)\neq 0$, denote $\gamma(x)=u(x)/v(x)$.
Then we have $\gamma'(x)+(c_{11}(x)-c_{22}(x))\gamma(x)-c_{21}(x)\gamma(x)^2+c_{12}(x)=0$.
If $u(x)\neq 0$, denote $\delta(x)=v(x)/u(x)$.
Then $\delta'(x)+(c_{22}(x)-c_{11}(x))\delta(x)-c_{12}(x)\delta^2(x)+c_{21}(x)=0$.
Then
$V_{a,b,C(x)}$ is irreducible if and only if $\gamma'(x)+(c_{11}(x)-c_{22}(x))\gamma(x)-c_{21}(x)\gamma^2(x)+c_{12}(x)=0$ and $\delta'(x)+(c_{22}(x)-c_{11}(x))\delta(x)-c_{12}(x)\delta(x)^2+c_{21}(x)=0$ do not have solutions in $\C(x)$.

If $c_{21}(x)=0$, then $\delta'(x)+(c_{22}(x)-c_{11}(x))\delta(x)-c_{12}(x)\delta(x)^2=0$ always has a solution $\delta(x)=0$. If $c_{21}(x)\neq0$ and $\delta'(x)+(c_{22}(x)-c_{11}(x))\delta(x)-c_{12}(x)\delta(x)^2+c_{21}(x)=0$, then $\delta(x)\neq 0$.
Denote $\gamma(x)=1/\delta(x)$. Obviously, $\gamma'(x)+(c_{11}(x)-c_{22}(x))\gamma(x)-c_{21}(x)\gamma^2(x)+c_{12}(x)=0$.
\end{proof}
\yy{I rewrite this theorem and this proof as follows.}}
\begin{thm}\label{thm-rk2}Let $a\in \C\setminus\{0\}$, $b\in \C$ and $C(x)=\begin{pmatrix}
c_{11}(x)&c_{12}(x)\\
c_{21}(x)&c_{22}(x)
\end{pmatrix}\in\C[x]^{2\times 2}$. Then
$V_{a,b,C(x)}$ is irreducible if and only if $c_{21}(x)\neq0$, $c_{12}(x)\neq0$ and $\gamma'(x)+(c_{11}(x)-c_{22}(x))\gamma(x)-c_{21}(x)\gamma^2(x)+c_{12}(x)=0$ does not have solutions in $\C(x)$.
\end{thm}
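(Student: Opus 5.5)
By Theorem \ref{thm-c}, since $a\neq 0$, the module $V_{a,b,C(x)}$ is irreducible if and only if $\mathcal{L}=\frac{d}{dx}+C(x)$ acts irreducibly on $K^2$, where $K=\C(x)$. For $n=2$, a nonzero proper $\mathcal{L}$-invariant subspace has dimension one over $K$. So by Proposition \ref{pro-4.2-1} with $r=1$, reducibility is equivalent to the following: there exist a nonzero $(u,v)^T\in K^2$ and $\alpha\in K$ with
\begin{equation*}
u'+c_{11}u+c_{12}v=\alpha u,\qquad v'+c_{21}u+c_{22}v=\alpha v.
\end{equation*}
The plan is to eliminate $\alpha$ by passing to ratios and then to treat the boundary cases where $u$ or $v$ vanishes.

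First, suppose $v\neq 0$ and set $\gamma=u/v\in K$. Then
\begin{equation*}
\gamma'=\frac{u'v-uv'}{v^2}=\frac{(\alpha u-c_{11}u-c_{12}v)v-u(\alpha v-c_{21}u-c_{22}v)}{v^2}.
\end{equation*}
The $\alpha$ terms cancel, and this reduces to $\gamma'+(c_{11}-c_{22})\gamma-c_{21}\gamma^2+c_{12}=0$. Conversely, if $\gamma\in K$ solves this Riccati equation, I take $(u,v)=(\gamma,1)$ and $\alpha=c_{21}\gamma+c_{22}$. The second equation then holds trivially, and the first is exactly the Riccati equation. So a rational solution $\gamma$ gives a one-dimensional invariant subspace $K(\gamma,1)^T$.

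Symmetrically, when $u\neq 0$ the ratio $\delta=v/u$ satisfies $\delta'+(c_{22}-c_{11})\delta-c_{12}\delta^2+c_{21}=0$, and conversely any rational solution of this equation gives the invariant line $K(1,\delta)^T$. Hence $\mathcal{L}$ is reducible if and only if at least one of the two Riccati equations has a solution in $K$.

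It remains to turn this into the stated criterion, and this bookkeeping is the only delicate step.
\begin{enumerate}
\item If $c_{21}=0$, then $\delta=0$ solves the $\delta$-equation, so the module is reducible; indeed $K e_1$ is invariant.
\item If $c_{12}=0$, then $\gamma=0$ solves the $\gamma$-equation; indeed $K e_2$ is invariant.
\item Assume $c_{12}\neq0$ and $c_{21}\neq 0$. Any rational solution $\delta$ of the $\delta$-equation is nonzero, since $\delta=0$ would force $c_{21}=0$. Then $\gamma=1/\delta$ satisfies $\gamma'=-\delta'/\delta^2$. Substituting and dividing by $-\delta^2$ shows that $\gamma$ solves the $\gamma$-equation.
\end{enumerate}
Thus, under $c_{12}c_{21}\neq 0$, reducibility is equivalent to the $\gamma$-equation alone having a rational solution. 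Combining this with the two degenerate cases gives exactly the statement of Theorem \ref{thm-rk2}.
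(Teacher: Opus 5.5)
Your proposal is correct and follows essentially the same route as the paper. Both reduce via Theorem \ref{thm-c} to a one-dimensional $\mathcal{L}$-invariant line, eliminate $\alpha$ through the ratios $\gamma=u/v$ and $\delta=v/u$, handle the cases $c_{21}=0$ or $c_{12}=0$ directly, and use $\gamma=1/\delta$ when $c_{12}c_{21}\neq 0$. The only difference is that you prove just the direction actually needed (a rational $\delta$-solution gives a rational $\gamma$-solution), whereas the paper proves that the two Riccati equations are equivalent.
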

\begin{proof}
By Theorem \ref{thm-c}, for $a\neq 0$, $V_{a,b,C(x)}$ is irreducible if and only if $\mathcal L=\frac{d}{dx}+C(x)$ acts irreducibly on $K^2$, where $K=\mathbb{C}(x)$.

A proper $\mathcal L$-invariant subspace of $K^2$, if it exists, must be one-dimensional. Let it be spanned by $(u(x),v(x))^T\neq 0$. Then there exists $\alpha(x)\in K$ such that $$\mathcal{L}\begin{pmatrix}u(x)\\v(x)\end{pmatrix}=\begin{pmatrix}u'(x)\\v'(x)\end{pmatrix}
+C(x)\begin{pmatrix}u(x)\\v(x)\end{pmatrix}=
\alpha(x)\begin{pmatrix}u(x)\\v(x)\end{pmatrix},$$
which is equivalent to the system
\begin{eqnarray}
\begin{cases}\label{cs}
u'(x)+c_{11}(x)u(x)+c_{12}(x)v(x)=\alpha(x)u(x)\\
v'(x)+c_{21}(x)u(x)+c_{22}(x)v(x)=\alpha(x)v(x)
\end{cases}.
\end{eqnarray}

We first observe that Eq. $(\ref{cs})$ has a nonzero solution in $K$ for some $\alpha(x)\in K$ if and only if at least one of the following two equations has a solution in $K$:
\begin{eqnarray}
&&\label{eqq1}\gamma'(x)+(c_{11}(x)-c_{22}(x))\gamma(x)-c_{21}(x)\gamma^2(x)+c_{12}(x)=0, \\
&&\label{eqq2}\delta'(x)+(c_{22}(x)-c_{11}(x))\delta(x)-c_{12}(x)\delta^2(x)+c_{21}(x)=0.
\end{eqnarray}
Indeed, if $v(x)\neq 0$, setting $\gamma(x)=u(x)/v(x)$ yields Eq. (\ref{eqq1}); if $u(x)\neq 0$, setting $\delta(x)=v(x)/u(x)$ yields a solution of Eq. (\ref{eqq2}). Conversely, if $\gamma$ solves Eq. (\ref{eqq1}), then $(u(x),v(x))^T=(\gamma(x),1)^T$ (with a suitable $\alpha(x)$) is a nonzero solution of Eq. $(\ref{cs})$ with $\alpha(x)=c_{21}(x)\gamma(x)+c_{22}(x)$; similarly, if $\delta(x)$ solves Eq. (\ref{eqq2}), then $(u(x),v(x))^T=(1,\delta(x))^T$ is a nonzero solution of Eq. $(\ref{cs})$ with $\alpha(x)=c_{12}(x)\delta(x)+c_{11}(x)$.

If $c_{21}(x)=0$, then Eq. (\ref{eqq2}) has the solution $\delta(x)=0$. Hence Eq. $(\ref{cs})$ has a nonzero solution. Therefore, the action of $\mathcal L$ on $K^2$ is reducible. By Theorem \ref{thm-c}, $V_{a,b,C(x)}$ is reducible. Similarly, we also obtain that if $c_{12}(x)=0$, then $V_{a,b,C(x)}$ is reducible.

Assume that $c_{21}(x)\neq 0$ and $c_{12}(x)\neq 0$. We claim that in this case, Eq. (\ref{eqq1}) has a solution in $K$ if and only if Eq. (\ref{eqq2}) has a solution in $K$. Suppose that $\gamma(x)$ is a solution of Eq. (\ref{eqq1}). If $\gamma(x)\neq 0$, then $\delta(x)=1/\gamma(x)$ is a solution of Eq. (\ref{eqq2}); if $\gamma(x)=0$, then Eq. (\ref{eqq1}) forces $c_{12}(x)=0$, a contradiction. Conversely, suppose that $\delta(x)$ is a solution of Eq. (\ref{eqq2}). Then $\delta(x)\neq 0$, for otherwise Eq. (\ref{eqq2}) would imply $c_{21}(x)=0$, contradicting our assumption. Then $\gamma(x)=1/\delta(x)$ is a nonzero solution of Eq. (\ref{eqq1}). This proves the claim.

Consequently, when $c_{21}(x)\neq 0$ and $c_{12}(x)\neq 0$, Eqs. (\ref{eqq1}) and (\ref{eqq2}) have solutions simultaneously. Then this conclusion holds.
\end{proof}
\begin{rmk}
Note that Eq. (\ref{eqq1}) has a solution in $K$ for the matrix $C(x)=\begin{pmatrix}
c_{11}(x)&c_{12}(x)\\
c_{21}(x)&c_{22}(x)
\end{pmatrix}$ if and only if Eq. (\ref{eqq2}) has a solution in $K$ for the matrix $C^\star(x)=\begin{pmatrix}
c_{22}(x)&c_{21}(x)\\
c_{12}(x)&c_{11}(x)
\end{pmatrix}$. Therefore, by the proof of Theorem \ref{thm-rk2}, we have that $V_{a,b,C(x)}$ is irreducible if and only if $V_{a,b,C^\star(x)}$ is irreducible.
\end{rmk}
The equation
$\gamma'(x)+(c_{11}(x)-c_{22}(x))\gamma(x)-c_{21}(x)\gamma^2(x)+c_{12}(x)=0$ is a Raccati equation, which can be studied through the following procedure in general. If $\gamma_1(x)$ is a particular solution, then $y(x)=\gamma(x)-\gamma_1(x)$ is a solution of a certain Bernoulli equation, that can be solved explicitly. Thus the key point is to find a particular solution. In order to find a particular solution of Eq. (\ref{eqq1}), by using $\gamma(x)=\frac{-z'(x)}{c_{21}(x)z(x)}$, we obtain a second order linear differential equation $z''(x)+(c_{11}(x)-c_{22}(x))z'(x)-c_{12}(x)c_{21}(x)z(x)=0$. Setting $z(x)=\beta(x) w(x)$ with $2\beta'(x)+(c_{11}(x)-c_{22}(x))=0$, then we get $w''(x)+r(x)w(x)=0$ for some $r(x)\in\C(x)$.  which can be solved by employing Kovacic's algorithm in differential Galois theory \cite{CH}. Alternatively, one can also analyze the possible rational solutions of certain Riccati equations directly.

\begin{ex}
Take $C(x)=\begin{pmatrix}1&x\\
1&1
\end{pmatrix}$. In this case, $c_{21}(x)=1\neq0$ and $c_{12}(x)=x\neq0$. If $\gamma(x)=g(x)/f(x)\in\C(x)$ with $g(x)$, $f(x)\in \C[x]$ is a solution of the ordinary differential equation $$\gamma'(x)+(c_{11}(x)-c_{22}(x))\gamma(x)-c_{21}(x)\gamma^2(x)+c_{12}(x)
=\gamma'(x)-\gamma^2(x)+x=0,$$ then $g'(x)f(x)-f'(x)g(x)-g^2(x)+xf^2(x)=0$.
By comparing the degrees on both sides, we obtain a contradiction. Therefore, the Riccati equation has no solutions in $\mathbb C(x)$. By Theorem \ref{thm-rk2}, $V_{a,b,C(\partial)}$ is irreducible for any $a\neq0$.
\end{ex}

Finally, we shall prove that there exist finite irreducible generalized conformal modules of arbitrary finite rank over $Vir$.

Let $C(x) =\begin{pmatrix} 0 & x  \\  I_{n-1} & 0 \end{pmatrix}\in\C[x]^{n\times n}$. We consider the irreducibility of $V_{a,b,C(x)}$ when $a\neq0$. As explained in Remark \ref{rmk-4.2}, we can take $(a,b)=(1,0)$.
Recall $V_{1,0,C(x)}=\C[x]^n$ and let $e_i=(0,\cdots,0,1,0,\cdots,0)^T, 1\leq i\leq n$ be
the $i$-th coordinate vector of $\C[x]^n$. Then we have
$$C(x)e_i=
\begin{cases}
e_{i+1},&i\leq n-1, \\
xe_1,&i=n.
\end{cases}$$
Hence for any $f(x)\in\C[x]$, we obtain
$$\mathcal{L}(f(x)e_i)=
\begin{cases}
f'(x)e_i+f(x)e_{i+1},&i\leq n-1, \\
f'(x)e_n+xf(x)e_1, &i=n.
\end{cases}$$
Since $L_0=x\mathcal{L}+1$ when acting on $\C[x]^n$, we deduce
\begin{equation}\label{rmk-eq1}
L_0(f(x)e_i)=
\begin{cases}
(f(x)+xf'(x))e_i+xf(x)e_{i+1},&i\leq n-1, \\
(f(x)+xf'(x))e_n+x^2f(x)e_1, &i=n.
\end{cases}
\end{equation}

Throughout the rest part of this section, we assume that $U$ is a nonzero submodule of $V_{1,0,C(x)}$.
\begin{lem}
There exists some nonzero $\sum_{i=1}^n f_i(x)e_i\in U$ such that $deg\,f_n(x)\geq deg\,f_i(x)$ for any $i\in \{1, \ldots,n\}$.
\end{lem}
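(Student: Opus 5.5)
Since $U\neq 0$, we start from an arbitrary nonzero vector of $U$ and improve it by applying $\mathcal{L}$, which preserves $U$. Indeed $U$ is a $W_1$-submodule of $V_{1,0,C(x)}$; in particular it is a $\C[x]$-submodule (since $L_{-1}$ acts by $x$) and is stable under $L_{-1}=x\mathcal{L}^0$ and under $L_0=x\mathcal{L}+1$. On $\C[x]^n$ we also have $L_{-1}=\mathcal{L}^0\cdot x$ is multiplication by $x$ (as $b=0$), so it is simplest to work with $L_0$ and multiplication by $x$. From Eq.~(\ref{rmk-eq1}), $L_0$ acting on $f(x)e_i$ produces $(f+xf')e_i$ together with $xf(x)e_{i+1}$ for $i\le n-1$, or $x^2f(x)e_1$ for $i=n$. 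Thus $L_0$ shifts the $i$-th component into the $(i+1)$-th slot cyclically, raising degree by $1$ (by $2$ from slot $n$ to slot $1$), while keeping a diagonal part $(f+xf')e_i$ whose degree equals $\deg f$.

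Take $0\neq u=\sum_i f_i e_i\in U$, and let $d=\max_i \deg f_i$. Let $j$ be the largest index with $\deg f_j=d$. If $j=n$ we are done. Otherwise consider $w=L_0u-c\,u$ for a suitable constant $c$, or more cleanly $w=(L_0-(d+1))u$. The diagonal part kills the top coefficient of every component of degree $d$, because the leading term of $f+xf'$ is $(d+1)$ times that of $f$. The shifted part $x f_j e_{j+1}$ then contributes degree $d+1$ in slot $j+1$. The other contributions in degree $d+1$ come from $xf_i e_{i+1}$ with $\deg f_i=d$, or $x^2 f_n e_1$, which is impossible here since $\deg f_n<d$ when $j<n$, and in any case lands in slot $1$. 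So the maximal degree of $w$ becomes $d+1$, and it is attained in slots $i+1$ for those $i\le n-1$ with $\deg f_i=d$. Hence slot $j+1$ attains the new maximum, and the largest index attaining the maximum strictly increases. In particular $w\neq 0$.

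Iterating at most $n-1$ times, we reach a nonzero element of $U$ whose last component has maximal degree among all components, which is the claim. The main point to verify carefully is the bookkeeping in the step above: after subtracting $(d+1)u$, no component of degree $d+1$ can appear in a slot beyond $j+1$, and the leading coefficient in slot $j+1$ is exactly the nonzero leading coefficient of $f_j$. Both follow directly from Eq.~(\ref{rmk-eq1}), since the only degree-raising terms are the shifts $xf_ie_{i+1}$ and $x^2f_ne_1$.
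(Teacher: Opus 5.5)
Your argument is correct and is essentially the paper's proof: both advance the largest index attaining the maximal degree by one using $L_0$. The paper applies $L_0$ itself; your subtraction of $(d+1)u$ is harmless but unnecessary, since the shifted term $xf_je_{j+1}$ already has degree $d+1$, which exceeds every diagonal term $f_k+xf_k'$.

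Two small slips do not affect the conclusion. First, $\mathcal{L}$ itself need not preserve $U$; only $x\mathcal{L}=L_0-1$ does, and that is all you actually use. Second, $x^2f_ne_1$ can reach degree $d+1$ when $\deg f_n=d-1$, so it is not ``impossible''; it is harmless only because it lies in slot $1<j+1$, as you also note.
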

\begin{proof}
Since $U\neq 0$, there exists some nonzero $u=\sum_{i=1}^n f_i(x)e_i\in U$, where $f_i(x)\in \C[x]$. Denote $d(u)=max\{deg\,f_i(x)\,|\, i=1,\ldots,n\}$ and $\chi(u)=max\{j\,|\,deg\,f_j(x)=d(u)\}=p$. If $p<n$, by Eq.~(\ref{rmk-eq1}),
\delete{\begin{equation*}
\begin{aligned}
L_0(f e_i) &= (f + x f') e_i + x f e_{i+1}, \quad 1 \le i \le n-1, \\
L_0(f e_n) &= (f + x f') e_n + x^2 f e_1,
\end{aligned}
\end{equation*}}
we conclude that the $e_{p+1}$-component of $L_0(u)$ is $xf_p(x)+f_{p+1}(x)+xf'_{p+1}(x)$, which has the highest degree among all the coefficients of $L_0(u)$.  Then $\chi(L_0(u))=p+1$. Replacing $u$ by $L_0(u)$ and continuing this procedure, we get the desired assertion.
\end{proof}

\begin{lem}\label{lem-l2}
There exists some nonzero $h(x)\in \C[x]$ such that $h(x)e_i\in U$ for any $i\in \{1, \ldots,n\}$.
\end{lem}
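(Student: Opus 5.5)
The plan is to show that $U_K=\C(x)\otimes_{\C[x]}U$ is all of $K^n$, and then clear denominators. Recall that on $V_{1,0,C(x)}$ we have $L_k=x\mathcal{L}^{k+1}+(k+1)\mathcal{L}^k$. In particular $L_{-1}$ acts as multiplication by $x$, so $U$ is a $\C[x]$-submodule. By Lemma~\ref{lem-4.4}, $U_K$ is an $\mathcal{L}$-invariant $K$-subspace of $K^n$. Hence for any nonzero $u\in U$, all the vectors $u,\mathcal{L}u,\ldots,\mathcal{L}^{n-1}u$ lie in $U_K$. (In fact $\mathcal{L}^k u\in U$ itself need not be checked; it suffices to work in $U_K$.) I will show these $n$ vectors are $K$-linearly independent.

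\textbf{Weight function.} Define a weight on $\C[x]^n$ by
\[
w\Bigl(\sum_i f_i(x)e_i\Bigr)=\max\Bigl\{\deg f_i+\tfrac{i-1}{n}\;:\; f_i\neq 0\Bigr\}.
\]
Since the numbers $\frac{i-1}{n}$ have distinct fractional parts, the maximum is attained at a \emph{unique} index $i$. So every nonzero vector has a well-defined leading term $c\,x^{d}e_i$.

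From the formula for $\mathcal{L}(f(x)e_i)$ displayed before Eq.~(\ref{rmk-eq1}), the $C(x)$-part sends $x^de_i\mapsto x^de_{i+1}$ for $i<n$ and $x^de_n\mapsto x^{d+1}e_1$. Either way the weight rises by exactly $\frac1n$ and the leading coefficient is preserved. The derivative part lowers the degree, and hence the weight, by $1$. Therefore $w(\mathcal{L}v)=w(v)+\frac1n$, and the leading term of $\mathcal{L}v$ is the cyclic shift of that of $v$. By induction, $w(\mathcal{L}^ku)=w(u)+\frac kn$. Multiplying by a nonzero polynomial $g$ adds $\deg g$ to the weight and keeps the leading position.

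\textbf{Linear independence.} Suppose $\sum_{k=0}^{n-1}g_k\mathcal{L}^ku=0$ with $g_k\in K$ not all zero. Clearing denominators, we may take $g_k\in\C[x]$. The nonzero summands have weights $\deg g_k+w(u)+\frac kn$ for $0\le k\le n-1$, and these are pairwise distinct modulo $\Z$. So a unique summand has maximal weight. Its leading term cannot be cancelled by the others, which is a contradiction. Hence $\dim_K U_K=n$, i.e.\ $U_K=K^n$. I expect this weight bookkeeping to be the only real point. The main check is that the $x$ in $C(x)e_n=xe_1$ is exactly compensated by the choice of weights; the previous lemma, with $f_n$ of top degree, gives an alternative way to fix the starting leading position, but it is not needed with this weight.

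\textbf{Clearing denominators.} Since $U_K=K^n$, each $e_i$ is a $K$-linear combination $e_i=\sum_j r_{ij}u_j$ with $u_j\in U$ and $r_{ij}\in K$. Let $h(x)\in\C[x]\setminus\{0\}$ be a common denominator of all the $r_{ij}$. Then
\[
h(x)e_i=\sum_j\bigl(h(x)r_{ij}\bigr)u_j\in \C[x]U=U
\]
for every $i$, which is the claim.
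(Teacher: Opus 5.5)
Your proof is correct, but it follows a different route from the paper. The paper stays inside $\C[x]^n$ throughout:
\begin{itemize}
\item it invokes the preceding lemma to choose $u\in U$ whose $e_n$-coefficient $f_n$ has maximal degree;
\item it iterates $\phi=L_0-\mathrm{id}=x\mathcal{L}$, which preserves $U$ itself;
\item it forms the matrix $G$ with columns $u,\phi(u),\ldots,\phi^{n-1}(u)$ and argues by degree counting that the cyclic-diagonal product $f_n\cdot x^2f_n\cdots x^nf_n$ dominates, so $\det G\neq 0$;
\item it takes $h=\det G$ and uses the adjugate to write each $h(x)e_i$ as a $\C[x]$-combination of elements of $U$.
\end{itemize}

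You instead pass to $U_K$ through Lemma~\ref{lem-4.4} and iterate $\mathcal{L}$ itself, whose iterates only need to lie in $U_K$. You replace the determinant estimate by the weight $\deg f_i+\frac{i-1}{n}$. Under this weight $\mathcal{L}$ shifts leading terms cyclically and raises the weight by exactly $\frac1n$. The distinct fractional parts then give $K$-linear independence immediately, and $h$ arises only as a common denominator.

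Your version needs no special starting vector, so the preceding lemma becomes superfluous. The weight is also exactly the bookkeeping that makes the paper's claim about $\det G$ transparent: the paper's $\phi=x\mathcal{L}$ simply raises your weight by $1+\frac1n$ instead of $\frac1n$.

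Your argument also proves more. Every nonzero $\mathcal{L}$-invariant $K$-subspace of $K^n$ contains a nonzero vector of $\C[x]^n$. Hence your argument shows directly that $\mathcal{L}$ acts irreducibly on $K^n$, which is condition (a) of Theorem~\ref{thm-c}. So Lemma~\ref{lem-l3} and Proposition~\ref{pro-4.1} could be bypassed for this example.

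The paper's route, in exchange, never leaves $\C[x]^n$. It yields an explicit $h$ together with explicit expressions for $h(x)e_i$ in terms of $u,\phi(u),\ldots,\phi^{n-1}(u)$.
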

\begin{proof}
Assume that $u=\sum_{i=1}^n f_i(x)e_i\in U$ with $deg\,f_n(x)=max\{deg\,f_i\,|\, i=1,\ldots,n\}=N$. For convenience, we introduce a linear map $\phi$ defined by $$\phi:V_{1,0,C(x)}\rightarrow V_{1, 0, C(x)}, u\mapsto L_0(u)-u.$$ Then by Eq.~(\ref{rmk-eq1}), we obtain\begin{equation*}
\begin{aligned}
\phi(f(x) e_i)=
\begin{cases}
x f'(x)e_i + x f(x) e_{i+1}, &i=1,\cdots,n-1, \\
x f'(x)e_n + x^2 f(x) e_1,i=n.
\end{cases}
\end{aligned}
\end{equation*} and $\phi(U)\subseteq U$. By direct computation, we get
$$\phi(u)=(x^2f_n(x)+xf'_1(x))e_1+\sum_{i=2}^n (x f'_i(x)+xf_{i-1}(x)) e_i\in U.$$
Therefore, the $e_1$-coefficient of $\phi(u)$ has the
degree $N+2$, strictly higher than any other $e_i$-coefficient of $\phi(u)$ for $i=2,\cdots,n$.
It is easy to check that the leading term of $e_k$-coefficient of $\phi^k(u)$ is $x^{k+1}f_n(x)$, which has
the degree $N+k+1$.
The degree of $e_k$-coefficient of $\phi^k(u)$ is highest among those of $e_i$-coefficient of $\phi^k(u)$, $i=1,\ldots,k$ and
strictly higher than the degree of any other $e_i$-coefficient of $\phi^k(u)$, $i=k+1,\ldots,n$. 

Since $e_1,e_2,\ldots,e_n$ is a $\C[x]$-basis of $V$, we have
$(u,\phi(u),\cdots,\phi^{n-1}(u))=(e_1,e_2,\ldots,e_n)G$, where $G=(g_{ij}(x))\in\C[x]^{n\times n}$.
Note that
$G$ has the form
\begin{equation}
G=
\begin{pmatrix}
\ast &x^2f_n(x)+\text{lower terms}&\ast&\cdots&\ast\\
\ast &\ast&x^3f_n(x)+\text{lower terms}&\cdots&\ast\\
\vdots&\vdots&\vdots&\ddots&\vdots\\
\ast&\ast&\ast&\cdots&x^nf_n(x)+\text{lower terms}\\
f_n(x)&\ast&\ast&\cdots&\ast
\end{pmatrix}.
\end{equation}
Note that the determinant of $G$ has degree $nN+\frac{n(n+1)}{2}$. Therefore, we have $\text{det}~G\neq 0$. Set $h(x)=\text{det}~G$.
Let $G^*$ be the adjoint matrix of $G$. Then we get $GG^*=h(x)I_n$. Since
$$(u,\phi(u),\ldots,\phi^{n-1}(u))G^*=(e_1,e_2,\ldots,e_n)GG^*=(h(x)e_1,h(x)e_2,\ldots,h(x)e_n),$$
we conclude that $h(x)e_i\in U$ for $i=1,\ldots,n$.
\end{proof}

\begin{lem}\label{lem-l3} As a $\C[x]$-module, the rank of $U$ is $n$.
\end{lem}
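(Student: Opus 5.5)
This follows almost immediately from Lemma \ref{lem-l2}. By that lemma there is a nonzero polynomial $h(x)\in\C[x]$ such that $h(x)e_i\in U$ for every $i\in\{1,\ldots,n\}$. Hence the free $\C[x]$-module $h(x)V_{1,0,C(x)}=\bigoplus_{i=1}^n\C[x]h(x)e_i$ is contained in $U$. Since $h(x)\neq 0$ and $\C[x]$ is a domain, the elements $h(x)e_1,\ldots,h(x)e_n$ are linearly independent over $\C[x]$, so $h(x)V_{1,0,C(x)}$ has rank $n$.

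The plan is to squeeze $\operatorname{rank}_{\C[x]}U$ between two bounds.

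\begin{itemize}
\item \emph{Upper bound.} $U$ is a $\C[x]$-submodule of the free module $V_{1,0,C(x)}=\C[x]^n$ over a principal ideal domain. Therefore $U$ is free and $\operatorname{rank}U=\dim_{\C(x)}U_K\le n$.
\item \emph{Lower bound.} From $h(x)V_{1,0,C(x)}\subseteq U$ we get $(h(x)V_{1,0,C(x)})_K\subseteq U_K$. The left-hand side is all of $K^n$, because $h(x)$ is invertible in $K=\C(x)$. So $U_K=K^n$ and $\operatorname{rank}U\ge n$.
\end{itemize}

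Combining the two bounds gives $\operatorname{rank}_{\C[x]}U=n$.

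The real difficulty has already been handled in Lemma \ref{lem-l2}, where the nonvanishing of $\det G$ required the degree bookkeeping. Here there is no serious obstacle; the only point is to justify that rank may be computed after tensoring with $K$, which holds since localization is exact.

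With this lemma, every nonzero $W_1$-submodule of $V_{1,0,C(x)}$ has rank $n$. Criterion (c) of Theorem \ref{thm-c} then yields irreducibility of $V_{a,b,C(\partial)}$ for all $a\neq0$, using Remark \ref{rmk-4.2}.
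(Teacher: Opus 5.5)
Your proof is correct and follows the paper's argument, which simply cites Lemma~\ref{lem-l2} together with the fact that $\C[x]$ is a principal ideal domain. You have only made explicit the bound $n\le \mathrm{rank}_{\C[x]}U\le n$. One point you use implicitly is that $U$ is a $\C[x]$-submodule; this holds because $L_{-1}$ acts as multiplication by $x$ when $b=0$.
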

\begin{proof}
It follows from Lemma \ref{lem-l2} and the fact $\C[x]$ is a principal ideal domain immediately.
\end{proof}
\begin{pro}\label{thm-4} Let $C(\partial) =\begin{pmatrix} 0 & \partial  \\  I_{n-1} & 0 \end{pmatrix}\in\C[\partial]^{n\times n}$ and $a\neq 0$. Then the generalized conformal module $V_{a,b,C(\partial)}$ over $Vir$ is irreducible.
\end{pro}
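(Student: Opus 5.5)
The plan is to reduce the statement to the irreducibility criterion of Theorem \ref{thm-c}. Since $a\neq 0$, Theorem \ref{thm-c} says $V_{a,b,C(\partial)}$ is irreducible if and only if $\mathcal{L}=\frac{d}{dx}+C(x)$ acts irreducibly on $\C(x)^n$. Moreover, statement (c) of Theorem \ref{thm-c} (equivalently, Proposition \ref{pro-4.1}) shows this holds if and only if every nonzero $W_1$-submodule of $V_{a,b,C(x)}$ has rank $n$ over $\C[x]$. Neither $\mathcal{L}$ nor condition (c) depends on $a,b$ (Remark \ref{rmk-4.2}). So it suffices to check condition (c) for the single module $V_{1,0,C(x)}$.

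That check is exactly Lemma \ref{lem-l3}. For any nonzero submodule $U$ of $V_{1,0,C(x)}$, Lemma \ref{lem-l2} gives a nonzero $h(x)$ with $h(x)e_i\in U$ for all $i$. Hence $h(x)\C[x]^n\subseteq U\subseteq \C[x]^n$, so $U$ has rank $n$. Therefore $\mathcal{L}$ acts irreducibly on $\C(x)^n$ by Proposition \ref{pro-4.1}. Applying Theorem \ref{thm-c} (Proposition \ref{thm-4.1}) with arbitrary $a\neq0$ and $b$ then gives irreducibility of $V_{a,b,C(\partial)}$.

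The one point needing care is that the earlier lemmas were proved only for $(a,b)=(1,0)$, where $L_0=x\mathcal{L}+1$. The argument must transfer through the $\mathcal{L}$-irreducibility statement, not through the submodule structure directly, since submodules of $V_{a,b,C}$ for other $(a,b)$ are not literally the same sets. The chain is:
\[
\text{Lemma \ref{lem-l3} for } (1,0)\ \Rightarrow\ \mathcal{L} \text{ irreducible on } K^n\ \Rightarrow\ V_{a,b,C} \text{ irreducible for all } a\neq 0.
\]
The real work, namely the determinant-degree computation in Lemma \ref{lem-l2} showing $\det G\neq 0$, has already been done. So I expect no genuine obstacle beyond verifying that the leading-term bookkeeping there is sound. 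In particular, the diagonal-like entries $x^{k+1}f_n(x)$ of $G$ must dominate so that $\deg\det G=nN+\frac{n(n+1)}{2}$.
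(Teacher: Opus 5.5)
Your proposal is correct and is the paper's own proof. Lemma \ref{lem-l3} for $(a,b)=(1,0)$ gives condition (c) of Theorem \ref{thm-c}, hence $\mathcal{L}$-irreducibility on $\C(x)^n$; that property is independent of $(a,b)$ (Remark \ref{rmk-4.2}), so Theorem \ref{thm-c} yields irreducibility for every $a\neq 0$, and you are right that the transfer must go through $\mathcal{L}$ rather than through the submodule lattices.

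One harmless correction to the bookkeeping you flag: the unique top-degree term of $\det G$ comes from entries of degrees $N, N+2, \ldots, N+n$, so $\deg\det G = nN+\frac{n(n+1)}{2}-1$ (e.g.\ $2N+2$ for $n=2$); only $\det G\neq 0$ is needed.
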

\begin{proof}
It follows from Remark \ref{rmk-4.2}, Lemma \ref{lem-l3} and Theorem \ref{thm-c}  immediately.
\end{proof}

\begin{thm}\label{thm-5}
There exist irreducible generalized conformal modules of arbitrary rank over $Vir$.
\end{thm}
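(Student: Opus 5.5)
The plan is to derive Theorem \ref{thm-5} directly from Proposition \ref{thm-4}, handling the rank $1$ case separately. Fix a positive integer $n$.

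\emph{Case $n=1$.} Here we do not need the matrix construction at all. By Proposition \ref{pro-2}, the conformal module $M_{a,b}$ with $a\neq 0$ is irreducible and free of rank one over $\C[\partial]$. Alternatively, Theorem \ref{thm-2} gives the non-conformal irreducible modules $M_{q(\partial),a,b}$ with $a\neq0$ and $q(\partial)\notin\C$; these are non-conformal by Proposition \ref{pro-cm}. Note also that the matrix of Proposition \ref{thm-4} degenerates when $n=1$, since $I_0$ is empty and $C(\partial)=(\partial)$. This is consistent with the above, because $V_{a,b,(\partial)}=M_{\partial^2/2,a,b}$.

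\emph{Case $n\geq 2$.} Take
$$C(\partial)=\begin{pmatrix}0&\partial\\ I_{n-1}&0\end{pmatrix}\in\C[\partial]^{n\times n}$$
and any $a\neq 0$, $b\in\C$. By Proposition \ref{pro-gen1}, $V_{a,b,C(\partial)}$ is a generalized conformal module over $Vir$ that is free of rank $n$ over $\C[\partial]$. It is non-trivial, because $L_\lambda$ has constant term $(\partial+b)I_n\neq 0$. Proposition \ref{thm-4} shows that it is irreducible. Its proof runs as follows:
\begin{itemize}
\item[(i)] Remark \ref{rmk-4.2} reduces the problem to $(a,b)=(1,0)$.
\item[(ii)] Lemma \ref{lem-l2} shows that any nonzero submodule contains $h(x)e_i$ for all $i$, with $h\neq0$.
\item[(iii)] Hence every nonzero submodule has full rank $n$ (Lemma \ref{lem-l3}).
\item[(iv)] By criterion (c) of Theorem \ref{thm-c}, this full-rank property gives irreducibility.
\end{itemize}

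The only point to watch is the determinant degree count in Lemma \ref{lem-l2}, which is where the shape of $C(\partial)$ matters. I would take that lemma as established. With it, the theorem follows immediately.

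It is also worth recording that these modules are genuinely non-conformal once $n\geq 2$. This follows from Proposition \ref{pro-2}, since finite non-trivial irreducible conformal modules over $Vir$ have rank one. This underlines the contrast stated in the introduction.
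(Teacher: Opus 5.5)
Your proposal is correct and follows the paper's route. Theorem~\ref{thm-5} is deduced directly from Proposition~\ref{thm-4}, whose proof is exactly your chain Remark~\ref{rmk-4.2}, Lemma~\ref{lem-l2}, Lemma~\ref{lem-l3}, Theorem~\ref{thm-c}(c); your separate treatment of $n=1$, with the correct identification $V_{a,b,(\partial)}=M_{\partial^2/2,a,b}$, is a harmless addition.

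On the point you flag: the marked entries of $G$ in Lemma~\ref{lem-l2} actually give $\deg\det G=nN+\frac{n(n+1)}{2}-1$, not the stated $nN+\frac{n(n+1)}{2}$. This slip does not affect the argument. Any permutation attaining the column-wise maximal degrees is forced to pick row $j-1$ in column $j\ge 2$, and hence row $n$ in column $1$. So the top term is unique and $\det G\neq 0$.
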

\begin{proof}
It is straightforward by Proposition \ref{thm-4}.
\end{proof}
\section{A class of infinite generalized conformal modules over $Vir$ which are torsion as $\C[\partial]$-modules}
In this section, we construct a family of irreducible generalized conformal modules over $Vir$ that are torsion over $\C[\partial]$ yet non-trivial as $Vir$-modules. Such modules do not exist in the finite case (see Proposition \ref{pro-finite}), and they also have no counterpart among conformal modules. Thus, they reveal how the theory of generalized conformal modules differs from that of conformal modules, and how the infinite behavior diverges from the finite one.

Recall that a finite irreducible non-trivial conformal module over $Vir$ is of rank one.
A rank one module over $Vir$ has a $\C$-basis $v_i, i=0,1,2,\ldots, n, \ldots$ with the action $\partial v_i=v_{i+1}$. The $\C[\partial]$-module structure can be indicated as
$$v_0\rightarrow v_1\rightarrow \cdots\rightarrow v_i\rightarrow v_{i+1}\rightarrow \cdots.$$
Reversing the arrow of the above diagram, we have
$$v_0\leftarrow v_1\leftarrow \cdots\leftarrow v_i\leftarrow v_{i+1}\leftarrow \cdots.$$
This represents a new $\C[\partial]$-module, which is called a {\bf $\C[\partial]$-module of corank one}.
Explicitly, a $\C[\partial]$-module $V$ of corank one is a vector space with a $\C$-basis $v_i, i=0,1,2,\ldots$ and the action
$\partial v_i=v_{i-1}$ (denote $v_{-1}=0$ for convenience).
Note that $V$ is not a finitely generated $\C[\partial]$-module and $\text{Tor}(V)=V$.

We are interested in the following question: {\bf can a $\C[\partial]$-module of corank one be made into a non-trivial module over $Vir$?}
In order to tackle this problem, we realize $V$ as $\mathbb C[t]$ by sending $v_i$ to $t^i/i!$, with $\partial$ acting as $d/dt$.
We begin by studying the conformal maps of $\C[t]$.

\begin{lem}\label{lem-conf}
Let $\Phi_\lambda: \C[t]\rightarrow \C[t][[\lambda]]$ be a conformal map, i.e., a linear
map satisfying $\Phi_\lambda(\partial f(t))=(\partial+\lambda)\Phi_\lambda(f(t))$ for any $f(t)\in\C[t]$.
Then there exist $b_i(\lambda)\in\C[[\lambda]],i\in\Z_+$ such that $$\Phi_\lambda(f(t))=e^{-\lambda t}(\sum_{i=0}^\infty b_i(\lambda)\partial^i)(f(t))\;\;\;\text{for any $f(t)\in\C[t]$}.$$
\end{lem}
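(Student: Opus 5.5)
Throughout, $\C[t]$ carries the $\C[\partial]$-module structure $\partial=d/dt$. The plan is to untwist $\Phi_\lambda$ by $e^{\lambda t}$, which turns it into a map commuting with $\partial$, and then to use that such maps on $\C[t]$ are (infinite) constant-coefficient differential operators.

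\emph{Step 1 (untwisting).} Define $\Psi_\lambda(f(t)):=e^{\lambda t}\Phi_\lambda(f(t))$. A priori this lies in $\C[[t,\lambda]]$ rather than $\C[t][[\lambda]]$, so we work in $\C[t][[\lambda]]$ extended to the ring $\C[[\lambda]][[t]]$-type completion where $e^{\pm\lambda t}$ makes sense. Indeed, each coefficient of $\lambda^k$ in $e^{\lambda t}$ is the polynomial $t^k/k!$, so $e^{\lambda t}\in\C[t][[\lambda]]$. Hence multiplication by $e^{\pm\lambda t}$ preserves $\C[t][[\lambda]]$, and $\Psi_\lambda:\C[t]\to\C[t][[\lambda]]$ is well defined. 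Since $\frac{d}{dt}(e^{\lambda t}g)=e^{\lambda t}(\partial+\lambda)g$, the conformal condition becomes
\[
\Psi_\lambda(\partial f)=e^{\lambda t}(\partial+\lambda)\Phi_\lambda(f)=\partial\bigl(e^{\lambda t}\Phi_\lambda(f)\bigr)=\partial\Psi_\lambda(f).
\]
So $\Psi_\lambda$ commutes with $\partial$.

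\emph{Step 2 (classifying maps commuting with $d/dt$).} Set $b(t,\lambda):=\Psi_\lambda(1)\in\C[t][[\lambda]]$. Since $\partial 1=0$, we get $\partial\Psi_\lambda(1)=0$, so $b$ is independent of $t$, i.e. $b_0(\lambda):=\Psi_\lambda(1)\in\C[[\lambda]]$. Inductively, write $\Psi_\lambda(t^m/m!)=:p_m(t,\lambda)$ with $\partial p_m=p_{m-1}$. Then $p_m$ is determined by integrating $p_{m-1}$ up to a constant in $\C[[\lambda]]$: $p_m=\int_0^t p_{m-1}+b_m(\lambda)$. By induction this yields
\[
p_m=\sum_{i=0}^m b_i(\lambda)\frac{t^{m-i}}{(m-i)!}=\Bigl(\sum_{i\ge0}b_i(\lambda)\partial^i\Bigr)\frac{t^m}{m!}.
\]
Verifying this induction is a one-line computation, since $\partial^i t^m/m!=t^{m-i}/(m-i)!$ and integration from $0$ matches the shift of the index. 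The sum $\sum_i b_i\partial^i$ acts finitely on each polynomial, so there is no convergence issue. By linearity, $\Psi_\lambda=\sum_i b_i(\lambda)\partial^i$ on all of $\C[t]$.

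\emph{Step 3.} Multiplying back by $e^{-\lambda t}$ gives the stated formula $\Phi_\lambda(f)=e^{-\lambda t}\bigl(\sum_i b_i(\lambda)\partial^i\bigr)f$.

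The only delicate point is the bookkeeping in Step 1, namely confirming that $e^{\pm\lambda t}$ lies in $\C[t][[\lambda]]$ and that the product rule $\partial(e^{\lambda t}g)=e^{\lambda t}(\partial+\lambda)g$ holds coefficientwise in $\lambda$. This follows by comparing coefficients of $\lambda^k$. Everything else is routine.
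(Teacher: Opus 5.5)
Your proof is correct and is essentially the paper's argument: the paper likewise writes $\Phi_\lambda(t^n)=e^{-\lambda t}\rho_n(t,\lambda)$ and derives $\rho_n'=n\rho_{n-1}$ (equivalent to your observation that $\Psi_\lambda$ commutes with $\partial$), then integrates from $0$ by induction; the only difference is that you use the basis $t^m/m!$, so the integration constants are directly the $b_i(\lambda)$, while the paper uses $t^n$ with binomial coefficients and sets $b_i=a_i/i!$. Your opening hedge about passing to a completion is unnecessary, since, as you yourself then observe, $e^{\pm\lambda t}\in\C[t][[\lambda]]$.
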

\begin{proof}
Since $\partial$ acts locally nilpotently on $\C[t]$, $\sum_{i=0}^\infty b_i(\lambda)\partial^i$ is well defined on $\C[t]$.
For $n\in \Z_+$, denote $\Phi_\lambda(t^n)=\chi_n(t,\lambda)$, where $\chi_n(t,\lambda)\in\C[t][[\lambda]]$.
Then we have $\Phi_\lambda(\partial t^n)=n\Phi_\lambda(t^{n-1})=n\chi_{n-1}(t,\lambda)$ and $(\partial+\lambda)\Phi_\lambda(t^n)=\chi_n'(t,\lambda)+\lambda \chi_n(t,\lambda)$. Consequently, we obtain
$$\chi_n'(t,\lambda)+\lambda \chi_n(t,\lambda)=n\chi_{n-1}(t,\lambda).$$
Denote $\chi_n(t,\lambda)=e^{-\lambda t}\rho_n(t,\lambda)$. Then we have
$\rho_n'(t,\lambda)=n\rho_{n-1}(t,\lambda)$. Set $a_n(\lambda)=\rho_n(0,\lambda)$.
\begin{cla}
$\rho_n(t,\lambda)=\sum_{i=0}^n \binom{n}{i}a_i(\lambda)t^{n-i}$.
\end{cla}
We prove this claim by induction on $n$.
Since $\rho_0'(t,\lambda)=0$, we have $\rho_0(t,\lambda)=\rho_0(0,\lambda)=a_0(\lambda)$. Assume
$\rho_{n-1}(t,\lambda)=\sum_{i=0}^{n-1} \binom{n-1}{i}a_i(\lambda)t^{n-1-i}$.
Since $\rho_n'(t,\lambda)=n\rho_{n-1}(t,\lambda)$, we obtain
\begin{equation*}
\aligned
\rho_n(t,\lambda)&=\int_{0}^{t}n\rho_{n-1}(t,\lambda) dt+\rho_n(0,\lambda)\\
&=\int_{0}^{t}\sum_{i=0}^{n-1}\binom{n-1}{i}na_i(\lambda)t^{n-1-i}dt+a_n(\lambda)\\
&=\sum_{i=0}^n \binom{n}{i}a_i(\lambda)t^{n-i}.
\endaligned
\end{equation*}
Then the claim follows.

Note that $\sum_{i=0}^n \binom{n}{i}a_i(\lambda)t^{n-i}=(\sum_{i=0}^\infty a_i(\lambda)\frac{\partial^i}{i!})t^n$. Hence, we get
$$\Phi_\lambda(t^n)=\chi_n(t,\lambda)=e^{-\lambda t}\rho_n(t,\lambda)=(e^{-\lambda t}\sum_{i=0}^\infty a_i(\lambda)\frac{\partial^i}{i!})t^n.$$
Set $b_i(\lambda)=\frac{a_i(\lambda)}{i!}$.
Then this conclusion follows.
\end{proof}

Let $\C[t]$ be a generalized conformal module over $Vir$.
Since $L_\lambda: \C[t]\rightarrow \C[t][[\lambda]]$ is a linear
map satisfying $\Phi_\lambda(\partial t^n)=(\partial+\lambda)\Phi_\lambda(t^n)$ for any $n\in \Z_+$, by Lemma \ref{lem-conf},
there
exist
$b_i(\lambda)\in\C[[\lambda]],i\in\Z_+$ such that $L_\lambda f(t)=e^{-\lambda t}(\sum b_i(\lambda)\partial^i) f(t)$.
Then we have
\begin{equation*}
[L_\lambda L]_{\lambda+\mu}f(t)=(\lambda-\mu)L_{\lambda+\mu}f(t)=(\lambda-\mu)e^{-\lambda t-\mu t}(\sum_i b_i(\lambda+\mu)\partial^i) f(t).
\end{equation*}
Moreover, we obtain
\begin{equation*}
\aligned
L_\lambda L_{\mu}f(t)=&L_\lambda(e^{-\mu t}(\sum b_i(\mu)\partial^i) f(t))\\
=&e^{-\lambda t}(\sum b_j(\lambda)\partial^j))(e^{-\mu t}(\sum b_i(\mu)\partial^i) f(t))\\
=&e^{-\lambda t-\mu t}\sum_i\sum_j\sum_k \binom{j}{k}b_j(\lambda)b_i(\mu)(-\mu)^k\partial^{i+j-k} f(t)\\
=&e^{-\lambda t-\mu t}\sum_r\sum_s\sum_p \binom{r+s}{s}b_{r+s}(\lambda)b_{p-r}(\mu)(-\mu)^s\partial^{p} f(t).
\endaligned
\end{equation*}
Similarly, we have
\begin{equation*}
L_{\mu}L_\lambda f(t)=e^{-\lambda t-\mu t}\sum_r\sum_s\sum_p \binom{r+s}{s}b_{r+s}(\mu)b_{p-r}(\lambda)(-\lambda)^s\partial^{p} f(t).
\end{equation*}
Hence by $[L_\lambda L]_{\lambda+\mu}t^n=L_\lambda L_{\mu}t^n-L_{\mu}L_\lambda t^n$, we have
\begin{equation}\label{tor1}
(\lambda-\mu)b_p(\lambda+\mu)=
\sum_{j=0}^n\sum_{k=0}^j\binom{j}{k}b_{j}(\lambda)b_{p-j+k}(\mu)(-\mu)^k-\sum_{j=0}^n\sum_{k=0}^j\binom{j}{k}b_{j}(\mu)b_{p-j+k}(\lambda)(-\lambda)^k.
\end{equation}
Set $F(z, \lambda)=\sum_{p=0}^\infty b_p(\lambda)z^p$. Then Eq.~(\ref{tor1}) becomes
\begin{equation}\label{tor2}
(\lambda-\mu)F(z,\lambda+\mu)=F(z-\mu,\lambda)F(z,\mu)-F(z-\lambda,\mu)F(z,\lambda).
\end{equation}
Set $G(z,\lambda)=F(z,-\lambda)$. Then Eq.~(\ref{tor2}) becomes
\begin{equation}\label{tor3}
(\lambda-\mu)G(z,\lambda+\mu)=G(z+\lambda,\mu)G(z,\lambda)-G(z+\mu,\lambda)G(z,\mu).
\end{equation}
Next, we solve this equation.
 We remark that Eq. (\ref{tor3}) is the same as
Eq. (\ref{main}), unless $G(z,\lambda)$ is required to be in $\C[[z,\lambda]]$, not in $\C[z][[\lambda]]$.
\begin{rmk}
For any $q(z)\in \C[[z]]$, 
$e^{q(z+\lambda)-q(z)}$ is also
well defined and $e^{q(z+\lambda)-q(z)}\in\C[[z,\lambda]]$.
\end{rmk}
Using the same method as in the proof of Lemma \ref{thm-1}, we have
$$G(z, \lambda)=(z+a \lambda+b)e^{q(z+\lambda)-q(z)}$$ for some $a,b\in\C$ and some $q(z)\in\C[[z]]$.
Then we obtain
\begin{eqnarray*}
L_\lambda f(t)&=&e^{-\lambda t}(\sum_{i=0}^\infty b_i(\lambda)\partial^i) f(t)=e^{-\lambda t}F(\partial,\lambda)f(t)\\
&=&e^{-\lambda t}G(\partial,-\lambda)f(t)=e^{-\lambda t}(\partial-a \lambda+b)e^{q(\partial-\lambda)-q(\partial)}f(t).
\end{eqnarray*}
Denote this module by $U_{a,b,q}$. 
When $q=0$, set $U_{a,b}=U_{a,b,0}$.
\begin{lem}For any $q(z)\in\C[[z]]$,
$U_{a,b}\cong U_{a,b,q}$.
\end{lem}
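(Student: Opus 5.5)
The natural candidate for the isomorphism is the operator $e^{-q(\partial)}$ acting on $\C[t]$. Define
$$\varphi: U_{a,b,q}\rightarrow U_{a,b},\qquad f(t)\mapsto e^{-q(\partial)}f(t).$$
Since $\partial=\frac{d}{dt}$ is locally nilpotent on $\C[t]$ and $q(z)\in\C[[z]]$, the series $e^{-q(\partial)}=\sum_{k\ge0}\frac{(-q(\partial))^k}{k!}$ gives a well-defined linear operator: $q(\partial)$ itself acts as a finite sum on each $f(t)$, and $e^{-q(z)}\in\C[[z]]$, so applying it to any fixed polynomial involves only finitely many terms. It is invertible with inverse $e^{q(\partial)}$, because these power series multiply to $1$ in $\C[[z]]$. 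It also commutes with $\partial$, since both are power series in $\partial$. So $\varphi$ is an isomorphism of $\C[\partial]$-modules, and it remains to check that it intertwines the two $\lambda$-actions.

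The key tool is a conjugation identity: as operators $\C[t]\rightarrow \C[t][[\lambda]]$,
$$h(\partial)\,e^{-\lambda t}=e^{-\lambda t}\,h(\partial-\lambda)\qquad\text{for every } h(z)\in\C[[z]].$$
I would prove it in three steps. First, the Leibniz rule gives $\partial\circ e^{-\lambda t}=e^{-\lambda t}\circ(\partial-\lambda)$. Iterating yields the identity for $h(z)=z^k$, and linearity gives it for polynomials. For a general power series, expand $h(\partial-\lambda)=\sum_{j}\frac{h^{(j)}(\partial)}{j!}(-\lambda)^j$. On a fixed polynomial, each coefficient of $\lambda^m$ on either side involves only finitely many terms, by local nilpotence of $\partial$ together with the $\lambda$-grading. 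Hence the polynomial case passes to $h\in\C[[z]]$ by truncating $h$ at a degree exceeding $\deg f+m$.

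With this identity the intertwining check is short. Let $L^{q}_\lambda$ denote the action on $U_{a,b,q}$ and $L^{0}_\lambda$ the action on $U_{a,b}$. Using that all power series in $\partial$ and $\lambda$ commute,
$$\varphi(L^{q}_\lambda f)=e^{-q(\partial)}e^{-\lambda t}(\partial-a\lambda+b)e^{q(\partial-\lambda)-q(\partial)}f
=e^{-\lambda t}e^{-q(\partial-\lambda)}(\partial-a\lambda+b)e^{q(\partial-\lambda)}e^{-q(\partial)}f
=e^{-\lambda t}(\partial-a\lambda+b)e^{-q(\partial)}f=L^{0}_\lambda\varphi(f).$$
Thus $\varphi$ is an isomorphism of generalized conformal modules.

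The main obstacle is the second step, namely justifying that the formal manipulations with $e^{\pm q(\partial-\lambda)}$, for $q$ only a formal power series, are legitimate operator identities on $\C[t][[\lambda]]$. I would handle this by working coefficientwise in $\lambda$ and reducing everything to finite sums via local nilpotence of $\partial$, exactly as indicated in the proof of the conjugation identity.
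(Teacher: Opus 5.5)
Your proof is correct and is essentially the paper's argument. The paper uses the inverse map $\Phi=e^{q(\partial)}:U_{a,b}\to U_{a,b,q}$ and the same conjugation relation $e^{q(\partial)}e^{-\lambda t}=e^{-\lambda t}e^{q(\partial-\lambda)}$, which it writes as $e^{q(\partial)}L_\lambda f(t)=L_\lambda\bigl(e^{q(\partial-\lambda)}f(t)\bigr)$. Your version differs only in supplying the coefficientwise finiteness justifications that the paper leaves implicit.
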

\begin{proof}
\delete{Recall that
$$U_{a,b,q}=\C[t],\ L_\lambda f(t)=e^{-\lambda t}(\partial-a \lambda+b)e^{q(\partial-\lambda)-q(\partial)}f(t)$$
and
$$U_{a,b}=\C[t],\ L_\lambda f(t)=e^{-\lambda t}(\partial-a \lambda+b)f(t).$$}
Define $$\Phi:U_{a,b}\rightarrow U_{a,b,q},\ f(t)\mapsto e^{q(\partial)}f(t)\;\; \text{for any $f(t)\in U_{a,b}$.}$$ Then $\Phi$ is a well defined bijective $\C[\partial]$-module homomorphism.
By  direct computation, we have
\[
\Phi(L_\lambda f(t))=e^{q(\partial)}(L_\lambda f(t))=L_\lambda (e^{q(\partial-\lambda)}f(t))=e^{-\lambda t}(\partial-a \lambda+b)e^{q(\partial-\lambda)}f(t),
\]
and
\[
L_\lambda (\Phi f(t))=(L_\lambda e^{q(\partial)}f(t))=e^{-\lambda t}(\partial-a \lambda+b)e^{q(\partial-\lambda)-q(\partial)}e^{q(\partial)}f(t)=e^{-\lambda t}(\partial-a \lambda+b)e^{q(\partial-\lambda)}f(t).
\]
Therefore, $\Phi$ is a module isomorphism from $U_{a,b}$ to $U_{a,b,q}$.
\end{proof}
Therefore, we obtain the following result.
\begin{thm}\label{thm-6}
Let $V$ be a generalized conformal module of corank one over the Virasoro conformal algebra Vir. Then $V\cong U_{a,b}$ for some $a,b\in\C$. Moreover, $U_{a,b}$ is irreducible if and only $(a,b)\neq (0,0)$.
\end{thm}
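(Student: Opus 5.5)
The classification part of Theorem \ref{thm-6} is assembled from the preceding discussion, and the irreducibility part is a short degree argument exploiting that $\partial$ acts as $d/dt$ on $\C[t]$.

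\emph{Classification.} I identify $V$ with $\C[t]$, with $\partial$ acting as $d/dt$, via $v_i\mapsto t^i/i!$. By Lemma \ref{lem-conf}, $L_\lambda f(t)=e^{-\lambda t}F(\partial,\lambda)f(t)$ with $F(z,\lambda)\in\C[[z,\lambda]]$. Writing $G(z,\lambda)=F(z,-\lambda)$, the Jacobi identity becomes Eq.~(\ref{tor3}).

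I then rerun the argument of Lemma \ref{thm-1} in $\C[[z,\lambda]]$. Setting $\mu=0$ gives $G(z,0)=z+b$. Corollaries \ref{coro1} and \ref{coro2} go through verbatim, since they only use coefficient recursions. In the twisting step, $q$ is now allowed to be a power series with $q'(z)=\sum_k a_{k,1}z^{k-1}$ (shifted by $b$), and Lemma \ref{lem-em2} and Proposition \ref{prop1} hold formally in $\C[[z,\lambda]]$. This yields $G=(z+a\lambda+b)e^{q(z+\lambda)-q(z)}$, hence $V\cong U_{a,b,q}$. The preceding lemma then gives $U_{a,b,q}\cong U_{a,b}$.

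Here I tacitly assume $V$ is non-trivial, i.e. $G\neq0$. The zero action on a corank-one module is a separate degenerate case, and I would remark that the statement is to be read for non-trivial $V$.

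\emph{Submodules as $\C[\partial]$-modules.} Let $W\subseteq U_{a,b}$ be a nonzero submodule. Then $W$ is stable under $d/dt$. If $p\in W$ has degree $d$, then $p,p',\dots,p^{(d)}$ have degrees $d,d-1,\dots,0$, so they span $\C[t]_{\le d}$. Hence $W$ is either $\C[t]_{\le m}$ for some $m$, or all of $\C[t]$, and in particular $1\in W$.

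\emph{Irreducibility when $(a,b)\neq(0,0)$.} Since $\partial 1=0$, we get
\[
L_\lambda 1=(b-a\lambda)e^{-\lambda t},
\]
whose $\lambda^k$-coefficient is
\[
b\frac{(-t)^k}{k!}-a\frac{(-t)^{k-1}}{(k-1)!}.
\]
These coefficients lie in $W$. If $b\neq0$, the $\lambda^k$-coefficient has degree exactly $k$. If $b=0$ and $a\neq0$, it has degree exactly $k-1$. Either way $W$ contains polynomials of unbounded degree, so $W=\C[t]$ by the previous paragraph.

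\emph{Reducibility when $a=b=0$.} Then $L_\lambda f=e^{-\lambda t}f'$, so $\C\cdot1$ is killed by $L_\lambda$ and by $\partial$. Thus $\C\cdot1$ is a proper nonzero submodule.

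The only delicate step is checking that the proof of Lemma \ref{thm-1} really survives in $\C[[z,\lambda]]$. There the top index $n$ with $a_{n,1}\neq0$ need not exist, so $q$ must be taken as an infinite series. One must then verify that $e^{q(z+\lambda)-q(z)}$ is a well-defined invertible element of $\C[[z,\lambda]]$, so that multiplying by it preserves solutions of Eq.~(\ref{tor3}).
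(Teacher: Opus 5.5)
Your route is the paper's own: Lemma~\ref{lem-conf} and Eq.~\eqref{tor3}, then ``the same method as Lemma~\ref{thm-1}'' in $\C[[z,\lambda]]$, then $U_{a,b,q}\cong U_{a,b}$. Your irreducibility argument is correct and is essentially the paper's, stated more carefully. Your remark that the zero action must be excluded is a correct correction of the statement.

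\textbf{The gap.} The step that fails is ``Corollaries~\ref{coro1} and~\ref{coro2} go through verbatim.'' Those corollaries rest on the normalization $a_{i,0}=\delta_{i,1}$. Lemma~\ref{thm-1} obtains it by substituting $\widetilde{z}=z+b$. For $b\neq0$ this substitution is not defined on $\C[[z]]$; for example, $\sum_n n!\,z^n$ has no expansion in powers of $z+b$. So the ``shift by $b$'' in your twisting step is not available. Without it, the case $r=1$ of Eq.~\eqref{e-1} reads
\[
(p+i-1)a_{i,p}+(i+1)b\,a_{i+1,p}=\sum_{k=0}^{i}\sum_{s=1}^{p}\binom{k+s}{k}a_{i-k,p-s}a_{k+s,1},
\]
which is no longer triangular. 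The analogue of Corollary~\ref{coro2} is then false. For $\kappa\notin\{0,1\}$, take
\[
G(z,\lambda)=(z+\kappa\lambda+b)\Bigl(1+\frac{\lambda}{z+b}\Bigr)^{-\kappa}\in\C[[z,\lambda]].
\]
It solves Eq.~\eqref{tor3}, being the twist by $q=-\kappa\log(1+z/b)\in\C[[z]]$. It has $G(z,0)=z+b$ and $\lambda$-coefficient identically $0$, yet its $\lambda^2$-coefficient is $\frac{\kappa(1-\kappa)}{2(z+b)}\neq0$. So after twisting you cannot conclude $G=z+a\lambda+b$. The infinite-$q$ issue you flag is harmless, and for $b=0$ your argument does go through verbatim. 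The paper's one-line appeal to Lemma~\ref{thm-1} skips exactly the same point.

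\textbf{How to close it.} The conclusion is still true, but for $b\neq0$ it needs a further argument. Since $z+b$ is a unit in $\C[[z]]$, a twist makes the $\lambda$-coefficient $0$. The displayed relation then gives $a_{i,p}=a_{0,p}\binom{1-p}{i}b^{-i}$, that is, $G=(z+b)\phi\bigl(\lambda/(z+b)\bigr)$ with $\phi(x)=1+\sum_{p\ge2}c_px^p$. Equation~\eqref{tor3} becomes
\[
(\lambda-\mu)\phi(\lambda+\mu)=(1+\lambda)\phi(\lambda)\phi\Bigl(\frac{\mu}{1+\lambda}\Bigr)-(1+\mu)\phi(\mu)\phi\Bigl(\frac{\lambda}{1+\mu}\Bigr).
\]
Its $\mu^2$-coefficient gives
\[
(m+1)(m-2)c_{m+1}+2m(m-2)c_m+\bigl((m-1)(m-2)+2c_2\bigr)c_{m-1}=0,
\]
so all $c_p$ with $p\ge4$ are determined by $c_2,c_3$. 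One more coefficient comparison, which encodes the $W_1$-relation $[L_2,L_3]=-L_5$, forces
\[
9c_3^2+18c_2c_3+8c_2^2(1+c_2)=0.
\]
The points of this curve are exactly $(c_2,c_3)=\bigl(\tfrac{\kappa(1-\kappa)}{2},\tfrac{\kappa(\kappa^2-1)}{3}\bigr)$, that is, $\phi(x)=(1+\kappa x)(1+x)^{-\kappa}$. Untwisting gives $G=(z+\kappa\lambda+b)e^{\tilde q(z+\lambda)-\tilde q(z)}$ with $\tilde q\in\C[[z]]$, hence $V\cong U_{\kappa,b,\tilde q}\cong U_{\kappa,b}$. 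An argument of this kind has to replace the ``verbatim'' step.
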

\begin{proof}
We only need to determine 
the irreducibility of $U_{a,b}$.
If $(a,b)=(0,0)$, then we have $$L_\lambda f(t)=e^{-\lambda t}\partial f(t)=e^{-\lambda t}f'(t).$$
Consequently, $L_\lambda 1=0$ and $\C 1$ is a proper submodule of $U_{0,0}$. Hence $U_{0,0}$ is reducible.
If $(a,b)\neq(0,0)$,  then we obtain $$L_\lambda f(t)=e^{-\lambda t}(\partial-a \lambda+b)f(t).$$
Assume that $U$ is a nonzero submodule of $U_{a,b}$.
Since $U$ is a $\C[\partial]$-module, we have $1\in U$.
Moreover, we obtain
$$L_\lambda 1=e^{-\lambda t}(-a \lambda+b)
=(a \lambda+b)\sum_{k=0}^\infty \frac{(-\lambda t)^k}{k!}
=\sum_{k=0}^\infty(ka(-t)^{k-1}+b(-t)^k)\frac{\lambda^k}{k!}\in U[[\lambda]].$$
We deduce that $t^k\in U$ for any $k\in\Z_+$. Then $U=U_{a,b}$ and $U_{a,b}$ is irreducible.
\end{proof}
\begin{pro}Let $a_1,a_2,b_1,b_2\in\C$. Then
$U_{a_1,b_1}\cong U_{a_2,b_2}$ if and only if $b_1=b_2$ and one of the following holds:
\begin{enumerate}
\item $a_1=a_2$,
\item $b_1\neq0$ and $(a_1,a_2)=(1,0)$,
\item $b_1\neq0$ and $(a_1,a_2)=(0,1)$.
\end{enumerate}
\end{pro}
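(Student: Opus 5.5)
The plan is to first pin down what an isomorphism $U_{a_1,b_1}\to U_{a_2,b_2}$ can look like as a map of $\C[\partial]$-modules, and then turn compatibility with $L_\lambda$ into a functional equation in $\C[[z,\lambda]]$.

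\emph{Step 1 (shape of the map).} Every $\C[\partial]$-module endomorphism $\Phi$ of the corank one module $\C[t]$ (with $\partial=d/dt$) is of the form $\Phi=g(\partial)$ for a unique $g(z)\in\C[[z]]$. Indeed, $\Phi$ is determined by the values $\Phi(t^n/n!)$. The relation $\partial\Phi=\Phi\partial$ forces these values to be given by a single power series in $\partial$, by the same induction used in the Claim inside Lemma~\ref{lem-conf} (with $\lambda=0$). Such a $\Phi$ is bijective if and only if $g(0)\neq 0$. Next, since $g(\partial)e^{-\lambda t}=e^{-\lambda t}g(\partial-\lambda)$ (the conformal identity already used in the proof that $U_{a,b}\cong U_{a,b,q}$), the condition $\Phi L_\lambda=L_\lambda\Phi$ becomes
\begin{equation*}
g(z-\lambda)(z-a_1\lambda+b_1)=(z-a_2\lambda+b_2)g(z)\quad\text{in } \C[[z,\lambda]].
\end{equation*}
Conversely, any $g$ with $g(0)\neq0$ satisfying this identity gives an isomorphism.

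\emph{Step 2 (extracting conditions).} Setting $\lambda=0$ and cancelling $g\neq0$ gives $b_1=b_2=:b$. If $a_1=a_2$ we take $g=1$, which gives case (a). Assume now $a_1\neq a_2$ and put $c=a_2-a_1$. Comparing coefficients of $\lambda^1$ gives $(z+b)g'(z)=c\,g(z)$.

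If $b=0$, the constant term of $zg'(z)$ vanishes, so $c\,g(0)=0$. This forces $g(0)=0$, contradicting invertibility, so no isomorphism exists. If $b\neq0$, then $z+b$ is a unit in $\C[[z]]$, and the ODE gives $g(z)=g(0)(1+z/b)^{c}$ (the binomial series). Substituting this back, and writing $w=z+b$ and $s=\lambda/w$ (legitimate in $\C[[z]][[\lambda]]$ since $w$ is a unit), the full identity reduces to the one-variable power series identity
\begin{equation*}
(1-s)^{c}(1-a_1 s)=1-a_2 s.
\end{equation*}

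\emph{Step 3 (solving the final identity).} Compare coefficients in $s$. The $s^1$ coefficient is $-c-a_1=-a_2$, which is automatic. The $s^2$ coefficient gives $\binom{c}{2}+ca_1=0$; with $c\neq0$ this means $c-1+2a_1=0$. The $s^3$ coefficient then gives a second relation. Together these should force either $c=1$ with $a_1=0$, or $c=-1$ with $a_1=1$. Both are easily checked to satisfy the identity exactly: they are $(1-s)\cdot 1=1-s$ and $(1-s)^{-1}(1-s)=1$. These two solutions are cases (b) and (c).

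The main obstacle is the rigorous justification of Step 1, namely that the $\C[\partial]$-endomorphisms of $\C[t]$ are exactly the power series in $\partial$. The other delicate point is the bookkeeping in Step 3: making sure the $s^2$ and $s^3$ coefficient equations leave no further solutions. I expect to handle the latter by rewriting the identity as $(1-s)^{c}=(1-a_2s)/(1-a_1s)$ and comparing logarithmic derivatives. The right side is rational with at most one zero and one pole, so $c\in\{0,\pm1\}$ follows directly.
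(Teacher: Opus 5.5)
Your proof is correct, and its first half matches the paper's. That shared part is: the intertwiner is $g(\partial)$ with $g(0)\neq0$ (by the argument of Lemma~\ref{lem-conf} at $\lambda=0$), it satisfies $g(\partial-\lambda)(\partial-a_1\lambda+b_1)=(\partial-a_2\lambda+b_2)g(\partial)$, and setting $\lambda=0$ gives $b_1=b_2$. The two arguments differ only in how $g$ is then pinned down.

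\emph{The paper's route.} The paper specializes $\partial=0$ rather than taking the $\lambda$-linear coefficient. This gives $P(\lambda)(a_1\lambda+b)=p_0(a_2\lambda+b)$, and the two cases follow quickly:
\begin{enumerate}
\item for $b=0$ one reads off $a_1=a_2$ immediately;
\item for $b\neq0$ the intertwiner is forced to be the rational series $p_0(a_2\partial+b)(a_1\partial+b)^{-1}$.
\end{enumerate}
In the second case, substituting back and clearing denominators leaves a polynomial identity in $\C[\partial,\lambda]$. Its $\lambda^2$ and $\lambda\partial$ coefficients give $a_1a_2(a_1-a_2)=0$ and $(a_1-a_2)(a_1+a_2-1)=0$.

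\emph{Your route.} Your $\lambda$-linear slice instead yields the ODE $(\partial+b)g'=(a_2-a_1)g$. This has two merits: it identifies the only candidate intertwiner conceptually as $g(0)(1+\partial/b)^{a_2-a_1}$, and it exposes the $b=0$ obstruction as a constant-term condition. The cost is that you need formal complex powers and the factorization $(1+(z-\lambda)/b)^c=(1+z/b)^c(1-s)^c$ in $\C[[z,\lambda]]$. That step is legitimate via formal $\exp$ and $\log$, but the paper's route avoids it entirely by working with a rational function and a finite polynomial check.

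\emph{Step 3.} Your concern there is unnecessary, and no logarithmic-derivative argument is needed. The $s^2$ and $s^3$ coefficients of $(1-s)^c(1-a_1s)$ are $c(c-1+2a_1)/2$ and $-\binom{c}{2}(c-2+3a_1)/3$. With $c\neq0$, their vanishing leaves exactly $(c,a_1)=(1,0)$ or $(-1,1)$. You already checked that both satisfy the identity exactly.
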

\begin{proof}
Assume that $U_{a_1,b_1}\cong U_{a_2,b_2}$ and $\Phi$ is such an isomorphism.
Since $\Phi$ is a $\C[\partial]$-module homomorphism of $\C[t]$, by a similar argument as in the proof of Lemma \ref{lem-conf},
we have $\phi=P(\partial)=\sum_{i=0}^\infty p_k \partial^k\in\C[[\partial]]$ with $p_k\in\C$.
Moreover, since $\Phi$ is bijective, we have $p_0\neq 0$.
By $\Phi(L_\lambda f(t))=L_\lambda(\Phi(f(t))$, we obtain
\begin{equation}\label{eq-cr1}
P(\partial-\lambda)(\partial-a_1 \lambda+b_1)=P(\partial)(\partial-a_2 \lambda+b_2).
\end{equation}
Setting $\lambda=0$, we have $b_1=b_2$. Write $b=b_1$. Setting $\partial=0$ in Eq. (\ref{eq-cr1}), we have
$P(-\lambda)(-a_1 \lambda+b)=p_0(-a_2 \lambda+b)$. Replacing $\lambda$ by $-\lambda$ gives
$P(\lambda)(a_1 \lambda+b)=p_0(a_2 \lambda+b)$. The discussion varies on whether $b=0$.

If $b=0$, we have $a_1P(\lambda)=a_2p_0$. Then $a_1=a_2$ and $P(\partial)=p_0$.

If $b\neq 0$, since $a_1 \lambda+b$ is invertible in $\C[[\lambda]]$, combining
$$P(\partial)(a_1 \partial+b)=p_0(a_2 \partial+b),\ P(\partial-\lambda)(a_1 \partial-a_1\lambda+b)=p_0(a_2 \partial-a_2\lambda+b)$$ with Eq. (\ref{eq-cr1}), we have
$$(a_1 \partial+b)(a_2 \partial-a_2\lambda+b)(\partial-a_1 \lambda+b)=(a_2 \partial+b)(a_1 \partial-a_1\lambda+b)(\partial-a_2 \lambda+b)$$
which implies $a_1=a_2$, $(a_1, a_2)=(0,1)$ or $(a_1,a_2)=(1,0)$.

Next, we show $U_{0,b}\cong U_{1,b}$ if $b\neq 0$.
Note that when $(a_1,a_2)=(0,1)$ and $b\neq0$, we can take $p_0=b$ and then $P(\partial)=\partial+b$. So $f(t)\mapsto (\partial+b)(f(t))=f'(t)+bf(t)$ gives an isomorphism from $U_{0,b}$ to $U_{1,b}$. And $$P(\partial)^{-1}=(\partial+b)^{-1}=\sum_{k=0}^\infty (-1)^kb^{-k-1}\partial^k\in\C[[\partial]]$$ is an isomorphism from $U_{1,b}$ to $U_{0,b}$.
Then the conclusion holds.
\end{proof}

\begin{rmk}Let $V$ be a generalized conformal module over a Lie conformal algebra $\mathcal{A}$ and $\alpha\in\C$. Then $$V_{\partial-\alpha}=\{v\in V\,|\,(\partial-\alpha)^nv=0\;\;\text{for some $ n\in\Z_+$}\}$$ is a submodule of $V$ by Lemma \ref{lem-tor1}. By Lemma \ref{lem-tor2}, for any irreducible generalized conformal module $V$ with $\text{Tor}~(V)\neq 0$, 
we get $V=V_{\partial-\alpha}$ for some $\alpha\in\C$.
The module $U_{a,b}$ is an example of this form with $\alpha=0$.
Using a similar method, one can also construct an irreducible generalized conformal module over $Vir$ with $V=V_{\partial-\alpha}$ for a general $\alpha\in\C$.
First, we construct a $\C[\partial]$-module with a $\C$-basis $v_i, i=0,1,2,\ldots, n, \ldots$ and the action $(\partial -\alpha)v_i=v_{i-1}$. Let $\C[\partial]$ act on $\C[t]$ by $$\partial(f(t))=(\frac{d}{dt}+\alpha)f(t)=f'(t)+\alpha f(t).$$ Then $\C[t]$ becomes a $\C[\partial]$-module and $\C[t]=\C[t]_{\partial-\alpha}$. In the following, we attempt to define a module action of $Vir$ on $\C[t]$.
Let $\partial'=\partial-\alpha$.
Note that for a linear map $\Phi_\lambda: \C[t]\rightarrow \C[t][[\lambda]]$,  $\Phi_\lambda(\partial f(t))=(\partial+\lambda)\Phi_\lambda(f(t))$ is equivalent to
$\Phi_\lambda(\partial' f(t))=(\partial'+\lambda)\Phi_\lambda(f(t))$. So by an analogous argument as in the proof of Theorem \ref{thm-6}, we deduce this module is isomorphic to the following module
$$U^\alpha_{a,b}=\C[t],\ L_\lambda f(t)
=e^{-\lambda t}(\partial'-a \lambda+b)f(t)
=e^{-\lambda t}(\partial-a \lambda+b-\alpha)f(t),
$$
for any $f(t)\in\C[t]$, where $a,b\in\C$.
Similarly, $U^\alpha_{a,b}$ is irreducible if and only if $(a,b)\neq(0,0)$.
\end{rmk}

We conclude this paper with some applications of Theorem \ref{thm-6} to the representation theory of the Lie algebra $W_1$ of vector fields on a line and $sl(2,\C)$. Since $W_1$ is isomorphic to the annihilation algebra of $Vir$ and $sl(2,\C)$ is a subalgebra of $W_1$, $U_{a,b}$ becomes a module of $W_1$ by Proposition \ref{pro-3} and consequently a module over $sl(2,\C)$. Therefore, we can obtain a large class of interesting modules over these Lie algebras.
Explicitly, by $$L_\lambda f(t)
=e^{-\lambda t}(\partial-a \lambda+b)f(t)=\sum_{k=0}^\infty((-t)^k(f'(t)+bf(t))+(-t)^{k-1}k(-af))\frac{\lambda^k}{k!},$$
we have $L_{(k)}f(t)=(-1)^k(t^k(f'(t)+bf(t))+kat^{k-1}f(t))$. Then $U_{a,b}=\C[t]$ becomes a module of $W_1$ with actions
$$L_{k}f(t)=(-1)^{k+1}(t^{k+1}(f'(t)+bf(t))+(k+1)at^{k}f(t))\;\;\text{for any $k\geq-1$,}$$ In other words, we have
$$L_{k}t^n=(-1)^{k+1}((n+(k+1)a)t^{k+n}+bt^{k+n+1})\;\;\;\text{for any $k\geq-1$ and $ n\in\Z_+$.}$$ This module is irreducible if and only $(a,b)\neq(0,0)$.
Using the injection $f\mapsto L_{-1}, h\mapsto -2L_0, e\mapsto -L_1$ from $sl(2,\C)$ to $W_1$, we obtain a representation of $sl(2,\C)$ in $\C[t]$ by
\begin{equation*}
\begin{aligned}
&f t^n =L_{-1}t^n=nt^{n-1}+bt^n, \\
&ht^n=-2L_0t^n=2(n+a)t^n+2bt^{n+1},\\
&et^n=-L_1t^n=-(n+2a)t^{n+1}-bt^{n+2}.
\end{aligned}
\end{equation*}
It is easy to check that this module is irreducible if and only if $b\neq0$ or $a\notin-\frac{1}{2}\Z_+$.

\smallskip
\noindent {\bf Acknowledgments.} This research is supported by the Zhejiang
Provincial Natural Science Foundation of China (No.~LZ25A010004) and
Natural Science Foundation of China (No. 12171129).

\smallskip
\noindent
{\bf Declaration of interests. } The authors have no conflicts of interest to disclose.

\smallskip

\noindent
{\bf Data availability. } No new data were created or analyzed in this study.

\end{document}